\DeclareMathAlphabet{\itbf}{OML}{cmm}{b}{it}
 \DeclareMathAlphabet\mathbfcal{OMS}{cmsy}{b}{n}
\renewcommand{\hat}{\widehat}
\renewcommand{\tilde}{\widetilde}
\def\RR{\mathbb{R}}
\def\bx{{{\itbf x}}}
\def\bxi{\boldsymbol{\xi}}
\def\vx{{\vec{\itbf x}}}
\def\by{{{\itbf y}}}
\def\bu{{{\itbf u}}}
\def\bg{{\itbf g}}
\def\be{{\itbf e}}
\def\bU{{\itbf U}}
\def\bE{{\itbf{E}}}
\def\bB{{\itbf B}}
\def\bL{{\itbf L}}
\def\bJ{{\itbf J}}
\def\bGa{\tensor{\boldsymbol{\Gamma}}}
\def\bth{\tensor{\boldsymbol{\gamma}}}
\def\cP{{\cal P}}
\def\bv{{\itbf v}}
\def\bn{{\itbf n}}
\def\bphi{{\boldsymbol{\varphi}}}
\def\bpsi{{\bm{\psi}}}
\def\bbeta{{\boldsymbol{\beta}}}
\def\bet{{\boldsymbol \eta}}
\def\bY{{\itbf Y}}
\def\beps{\underline{\underline{\boldsymbol{\varepsilon}}}}
\def\eps{\varepsilon}
\def\bc{\underline{\underline{\itbf{c}}}}
\DeclareFontFamily{OMS}{oasy}{\skewchar\font48 }
\DeclareFontShape{OMS}{oasy}{m}{n}{%
         <-5.5> oasy5     <5.5-6.5> oasy6
      <6.5-7.5> oasy7     <7.5-8.5> oasy8
      <8.5-9.5> oasy9     <9.5->  oasy10
      }{}
\DeclareFontShape{OMS}{oasy}{b}{n}{%
       <-6> oabsy5
      <6-8> oabsy7
      <8->  oabsy10
      }{}
\DeclareSymbolFont{oasy}{OMS}{oasy}{m}{n}
\DeclareMathSymbol{\smallleftarrow}     {\mathrel}{oasy}{"20}
\DeclareMathSymbol{\smallrightarrow}    {\mathrel}{oasy}{"21}
\DeclareMathSymbol{\smallleftrightarrow}{\mathrel}{oasy}{"24}
\newcommand{\tensor}[1]{\underline{\underline{#1}}}
\def\bcG{\tensor{\boldsymbol{\mathcal{G}}}}
\def\bbG{\tensor{\boldsymbol{\mathbb{G}}}}
\def\bG{\tensor{\itbf G}}
\def\bH{{\itbf H}}
\def\bV{{\itbf V}}
\def\bK{{\itbf K}}
\def\bR{{\itbf R}}
\def\bI{{\itbf I}}
\def\bde{\tensor{\boldsymbol{\delta}}}
\def\bM{{\itbf M}}
\def\cP{{\mathcal P}}
\def\cI{{\mathcal{I}}}
\def\cD{\boldsymbol{\mathcal D}}
\def\bbD{{\boldsymbol{\mathbb{D}}}}
\def\cA{\mathbb{A}}
\def\12{{\frac{1}{2}}}
\def\RM{{\scalebox{0.5}[0.4]{ROM}}}
\def\RTM{{\scalebox{0.5}[0.4]{RTM}}}
\def\bcU{{\boldsymbol{\mathcal{U}}}}
\def\bcV{{\boldsymbol{\itbf{V}}}}
\def\bi{\itbf{i}}
\def\buR{\bu^{\RM}}
\def\cPR{\cP^{\RM}}
\def\bbU{\bm{\mathbb{U}}}
\def\bbM{\bm{\mathbb{M}}}
\def\bbF{\bm{\mathbb{F}}}
\def\bphi{{\boldsymbol{\varphi}}}
\def\bpsi{{\boldsymbol{\psi}}}
\def\bsig{{\boldsymbol{\sigma}}}
\def\om{\omega}
\def\la{\lambda}
\def\FWI{{\scalebox{0.5}[0.4]{FWI}}}
\spnewtheorem{rem}{Remark}{\bf}{\it}
\spnewtheorem{algorithm}{Algorithm}{\bf}{\it}
\begin{document}

\title{Electromagnetic inverse wave scattering  in anisotropic media via  reduced order modeling}


\author{Liliana Borcea       \and
        Yiyang Liu \and
        J\"{o}rn Zimmerling
}


\institute{Liliana Borcea\at
              Department of Mathematics,\\
              University of Michigan,\\
  		Ann Arbor,\\
  		 MI 48109,
  		 USA
  		 \email{borcea@umich.edu}
           \and
          Yiyang Liu \at
              Department of Mathematics,\\
              University of Michigan,\\
  		Ann Arbor,\\
  		 MI 48109,
  		 USA
  		 \email{yiyangl@umich.edu}
              \and
               J\"{o}rn Zimmerling\at
               Uppsala Universitet, Department of Information Technology,\\
               Division of Scientific Computing,\\ 
               75105 Uppsala,\\ 
               Sweden\\
               \email{ jorn.zimmerling@it.uu.se}
}

\date{\today}

\maketitle

\begin{abstract}
The inverse wave scattering problem seeks to estimate a heterogeneous, inaccessible medium, modeled by unknown variable coefficients in wave equations, from transient recordings of waves generated by probing signals. It is a widely studied inverse problem with important applications, that is typically formulated as a nonlinear least squares data fit optimization. For typical measurement setups and band-limited probing signals,  
 the least squares objective function has spurious local minima far and near the true solution, so Newton-type optimization methods 
fail to obtain satisfactory answers. We introduce a different approach, for electromagnetic inverse wave scattering in lossless, anisotropic media. It is an extension of recently developed
data driven reduced order modeling methods for the acoustic wave equation in isotropic media. 
Our reduced order model (ROM) is an algebraic, discrete time dynamical system derived from Maxwell's equations. It has four important properties: (1) It can be computed in a data driven way, without knowledge of the medium. (2) The data to ROM mapping is nonlinear and yet the ROM can 
be obtained in a non-iterative fashion, using numerical linear algebra methods. (3) The ROM has a special algebraic structure that captures the causal propagation of the wave field in the unknown medium. (4) It is an interpolation ROM i.e.,   
it fits the data on a uniform time grid. 
We show how to obtain from the ROM an estimate of the wave field at inaccessible points inside the unknown medium. The use of this wave is twofold: First, it defines a computationally inexpensive imaging function designed to estimate the support of reflective structures in the medium, modeled by jump discontinuities of the matrix valued dielectric permittivity. Second, it gives an objective function for quantitative estimation of the dielectric permittivity, that has better behavior than the least squares data fitting objective function. The methodology introduced in this paper applies to Maxwell's equations in three dimensions. To avoid high computational costs, we limit the study to a cylindrical domain filled with an orthotropic medium, so the problem becomes two dimensional.
\keywords{Inverse wave scattering \and imaging \and electromagnetic \and data driven \and reduced order modeling \and optimization}

\subclass{65M32 \and 41A20}
\end{abstract}


%
%

\section{Introduction}
\label{sect:intro}
The estimation of an inaccessible heterogeneous medium from transient recordings of waves generated by 
probing signals is important in radar imaging, remote sensing, medical diagnostics, 
nondestructive evaluation of materials and aging structures, exploration geophysics, underwater acoustics, 
and so on. Depending on the application, it is formulated as an inverse problem for the scalar (acoustic) wave equation, 
or for systems of equations that govern the propagation of vectorial (elastic or electromagnetic) waves. The medium 
 is modeled by unknown variable coefficients of these equations, which are scalar valued in the isotropic case or matrix valued in the anisotropic case.  The inverse wave scattering problem is to estimate these coefficients from the measurements. 

We study inverse scattering with electromagnetic waves,  governed by Maxwell's equations in lossless media.
It is known that the dielectric permittivity and magnetic permeability of an isotropic 
medium occupying a bounded and simply connected domain are uniquely determined by the admittance map, 
which takes the tangential boundary trace of the electric field to the tangential boundary trace of the magnetic field.  
\cite{ola1993inverse,kenig2011inverse}. The admittance map measured for time harmonic fields or transient (multi-frequency) fields is not sufficient 
for inverse scattering in anisotropic media.   It determines the matrix valued coefficients only up to a diffeomorphic transformation that leaves the boundary unchanged. This behavior is typical of other inverse 
problems in anisotropic media~\cite{lee1989determining} and has sparked interest in inversion 
algorithms that use prior information, such as the direction of anisotropy~\cite{heino2002estimation,abascal2011electrical}.
Interestingly, the shape of inclusions filled with anisotropic media is uniquely recoverable, as shown in ~\cite{piana1998uniqueness,cakoni2010inverse} for measurements of the far field pattern of the magnetic field,  with transverse electric polarization. In many applications, it is possible to get more information about the medium from data acquisitions that exploit different polarizations of the probing waves. The study~\cite{elbau2018inverse} shows that, at least in perturbative regimes,  the matrix valued dielectric permittivity of orthotropic media can be determined from measurements of the time harmonic electric  field generated by two different incident polarizations. 

We study inverse   wave scattering with measurements  gathered by an array 
of antennas that emit probing waves with two different polarizations and then measure the resulting electric wave field. 
Our approach applies in principle to Maxwell's equations in an arbitrary three dimensional domain. To minimize the computational cost, 
we work in a cylindrical domain $\Omega \times \RR$, with bounded and simply connected cross-section 
$\Omega \subset \RR^2$ that has Lipschitz continuous boundary $\partial \Omega$ \cite[Definition 3.1]{monk2003finite}. 

Introduce the coordinate system $\vx = (\bx,z)$, with $\bx \in \Omega$ and $z \in \RR$ and assume that the medium does not vary in $z$. In most applications the medium is non-magnetic \cite[Section 1.2]{cloude2009polarisation}, so we set the magnetic permeability equal to the scalar constant $\mu_o$.
Electromagnetic wave propagation 
in two dimensional isotropic media reduces to the acoustic wave equation. Thus,  we consider  an anisotropic, orthotropic medium~\cite{piana1998uniqueness,cakoni2010inverse}, also known as a medium with monoclinic symmetry \cite[Section 13.2.2]{Torquato}, modeled by the  piecewise smooth dielectric permittivity matrix
\begin{equation}
\boldsymbol{\eps}(\vx) = \begin{pmatrix} 
\beps(\bx) & {\bf 0} \\
{\bf 0}^T & \varepsilon_{z}(\bx) 
\end{pmatrix},  \quad  \beps(\bx) = \begin{pmatrix} \varepsilon_{11}(\bx) & \varepsilon_{12}(\bx) \\
 \varepsilon_{12}(\bx) &  \varepsilon_{22}(\bx) 
 \end{pmatrix}. \label{eq:1}
 \end{equation}
 Here $\beps\in \RR^{2 \times 2}$  is symmetric and positive definite, as it should be in passive and lossless media  \cite[Section 1.2.2]{hanson2013operator}, and $\varepsilon_z$ is positive.  We use consistently two underlines for $2\times 2$ matrix valued fields. 
  
We model the boundary $\partial \Omega \times \RR$ of the cylindrical domain, with outer normal 
\begin{equation}
\vec \bn(\vec{\bx}) = \begin{pmatrix} \bn (\bx) \\ 0 \end{pmatrix}, \quad \vx = (\bx,z) \in \partial \Omega \times  \RR,
\label{eq:defn}
\end{equation}
as perfectly conducting. This boundary  may be physical, in which  case we have a waveguide with open ends, 
or fictitious and justified by hyperbolicity of the problem: The waves propagate at finite speed, so they will not sense over the duration of the measurements a boundary that is sufficiently far from the source of waves. 

The wave is defined by the electric field $\vec \bE$ and the magnetic induction $\vec \bB$. These 
satisfy Faraday's law and Amp\`{e}re's  circuital law in Maxwell's system of equations
\begin{align}
\vec{\nabla} \times \vec{\bE}(t,\vec{\bx}) + \partial_t \vec{\bB}(t,\vx) &= 0, \label{eq:3} \\
-\vec{\nabla} \times \vec{\bB}(t,\vec{\bx}) + \mu_o \begin{pmatrix} 
\beps(\bx) & {\bf 0} \\
{\bf 0}^T & \varepsilon_{z}(\bx) 
\end{pmatrix} \partial_t \vec{\bE}(t,\vec{\bx}) &= - \mu_o \vec{\bJ}(t,\bx) ,  \label{eq:4}  
\end{align}
for $t \in \RR$ and $\vx = (\bx,z) \in \Omega \times \RR$, with  boundary condition 
\begin{align}
\vec{\bn}(\bx) \times \vec{\bE}(t,\vec{\bx}) &= \vec {\bf 0}, \quad  t \in \RR, ~~ \vx = (\bx,z) \in \partial \Omega \times \RR, \label{eq:5}
\end{align}
and with 
initial conditions
\begin{equation}
 \vec{\bE}(t,\vec{\bx}) \equiv \vec {\bf 0}, \quad  \vec{\bB}(t,\vec{\bx}) \equiv \vec {\bf 0}, \quad t \ll 0, ~~ \vx = (\bx,z) \in  \Omega \times \RR.
\label{eq:6}
\end{equation}
The wave excitation in ~\eqref{eq:4} is a $z-$independent transient source current density $\vec{\bJ} = (\bJ,J_z)$ supported away from $\partial \Omega$  and $t \ll 0$ means prior to the excitation. 


The assumption~\eqref{eq:1} on the dielectric permittivity, the choice of the domain and the $z-$independent source excitation, give that the initial boundary value problem~\eqref{eq:3}-\eqref{eq:6}
admits $z$ independent solutions
\begin{equation*}
\vec{\bE}(t,\bx) = \begin{pmatrix}\bE(t,\bx) \\ E_z(t,\bx) \end{pmatrix}, \quad 
\vec{\bB}(t,\bx) = \begin{pmatrix}\bB(t,\bx) \\ B_z(t,\bx) \end{pmatrix},
\end{equation*}
where $\bE$ and $\bB$ are vectors  in the plane orthogonal to the $z$ axis. To determine the dielectric pemittivity~\eqref{eq:1}, it suffices to work 
with the electric field, whose components  satisfy the decoupled wave equations 
\begin{equation}
\nabla^\perp \big[ \nabla^\perp \cdot \bE(t,\bx) \big] - \bc^{-2}(\bx) \partial_t^2 \bE(t,\bx) = \mu_o \partial_t \bJ(t,\bx),
\label{eq:We1} 
\end{equation}
and 
\begin{equation}
\Delta E_z(t,\bx) - c^{-2}_z(\bx) \partial_t^2 E_z(t,\bx) =\mu_o \partial_t J_z(t,\bx),
\label{eq:We4} 
\end{equation}
for $t \in \RR$ and $\bx \in \Omega$. Here $\bc$ and $c_z$ are the wave speeds defined by 
\begin{equation}
\bc(\bx) = \left[ \mu_o \beps(\bx)\right]^{-1/2}, \quad c_z(\bx) = 1/\sqrt{\mu_o \eps_z(\bx)},
\label{eq:defc}
\end{equation}
where  we take the unique symmetric and positive definite matrix square root \cite[Theorem 7.2.6]{horn2012matrix}. 
The operator 
$\nabla^\perp = (-\partial_{x_2},\partial_{x_1})$ is the two dimensional gradient 
$\nabla = (\partial_{x_1},\partial_{x_2})$ rotated by $90^o$ and $\Delta = \partial_{x_1}^2 + \partial_{x_2}^2$ is the Laplacian. 
At $t\ll 0$ we have the 
homogeneous initial conditions~\eqref{eq:6} and we deduce 
from~\eqref{eq:defn} and~\eqref{eq:5} the boundary conditions
\begin{align}
\bn^\perp(\bx) \cdot \bE(t,\bx) &= 0, \quad E_z(t,\bx) = 0, 
\label{eq:We6}
\end{align}
where $\bn^\perp = (-n_2,n_1)$ is the  vector $\bn = (n_1,n_2)$ rotated by $90^o$. 

Note that~\eqref{eq:We4} is the scalar wave equation in a medium with wave speed $c_z$. A data driven reduced order model 
methodology for estimating such a speed has been introduced recently in~\cite{borcea2022waveform,borcea2022internal,borcea2023data}. Thus, we study  henceforth the estimation of $\bc$  using the vectorial wave equation~\eqref{eq:We1}.

The data for the inversion are gathered by an array of $m$ point-like antennas, which emit probing pulses $f(t)$ supported in the 
short time interval $(-T_f,T_f)$ and with Fourier transform $\hat f(\om)$ that is non-negligible at frequencies $\om$ 
satisfying $|\om \mp \om_o | < \pi b$. Here $b$ is the bandwidth and $\om_o$ is the central angular frequency.  In all imaging applications the pulse satisfies \begin{equation}
\hat f(0) = \int_{-T_f}^{T_f} dt \, f(t) = 0,
\label{eq:pulsezero}
\end{equation}
 and typically, $\om_o \gg b$. 
 
 The antennas are  modeled by functions $F^{(s)} \in L^2(\Omega)$ supported around $\bx_s \in \Omega$,  in a set of small diameter with respect to the central wavelength $\la_o$, for $s = 1, \ldots, m$. 
Since the equation is linear, we can normalize $F^{(s)}$ to integrate to one and formally, we can view it as the Dirac delta at $\bx_s$. The separation between the antennas is less than $\la_o$, so they behave like a collective entity, aka the array.
The excitation from the $s^{\rm th}$ antenna has two polarizations and is given by the current density
\begin{equation}
\bJ^{(s,p)}(\bx) =\mu_o^{-1} f(t) F^{(s)}(\bx) \be_p, \quad s = 1, \ldots, m, ~~ p = 1,2,
\label{eq:Ant1}
\end{equation}
where  $\be_1, \be_2$ are the canonical basis vectors in $\RR^2$.  The resulting electric field, the solution 
of equation~\eqref{eq:We1} with $\bJ$ replaced by  $\bJ^{(s,p)}$, is denoted by $\bE^{(s,p)}$. Its recordings at all the antennas 
define the $(s,p)$ column of the   $2m \times 2m$ time dependent array response 
matrix $\boldsymbol{\mathcal{W}}(t)$, with entries 
\begin{equation}
{\mathcal{W}}^{(s',p'),(s,p)}(t) = \be_{p'}^T \int_{\Omega} d \bx \, F^{(s')}(\bx)\bE^{(s,p)}(\bx,t) \approx 
\be_{p'}^T\bE^{(s,p)}(t,\bx_{s'}),
 \label{eq:Ant2}
 \end{equation}
 for $s,s' = 1, \ldots, m$ and $p,p' = 1,2$. 
 
 The inverse problem is to estimate the $2\times 2$ symmetric, positive definite  matrix $\bc$ 
 in $\Omega$, from $\boldsymbol{\mathcal{W}}(t)$ measured in some time interval $t \in (t_{\min}, t_{\max})$. The classic 
 approach to solving it uses a  nonlinear least 
 squares data fit optimization 
 \begin{equation}
 \min_{\tilde \bc \in \mathscr{C}} \mathcal{O}^{\FWI}(\tilde \bc ) + \mbox{ regularization}, \quad 
 \mathcal{O}^{\FWI}(\tilde \bc) = \int_{t_{\min}}^{t^{\max}} dt \, \|\boldsymbol{\mathcal{W}}(t)-\boldsymbol{\mathcal{W}}(t;
 \tilde \bc)\|_F^2.
 \label{eq:FWI2}
\end{equation}
Here $\| \cdot \|_F$ is the Frobenius norm, $\tilde \bc$ denotes the search wave speed in some user defined space
$\mathscr{C}$, 
and $\tilde \bc \mapsto \boldsymbol{\mathcal{W}}(t;\tilde \bc)$ is the forward map, with output defined by 
the analogue of~\eqref{eq:Ant2}, calculated with $\tilde \bc$ instead of the true $\bc$. Note that to simplify the 
notation, we use the following convention: We omit the true speed $\bc$ 
from the list of arguments of the electric field and measurements. Thus, $\bE^{(s,p)}(t,\bx)$ is the field in the 
true medium, with unknown $\bc$, whereas $\bE^{(s,p)}(t,\bx;\tilde \bc)$, which defines $\boldsymbol{\mathcal{W}}(t;\tilde \bc)$, is the computable field for the search speed $\tilde \bc$.

\begin{figure*}[t]
\begin{center}
\includegraphics[width=0.3\textwidth]{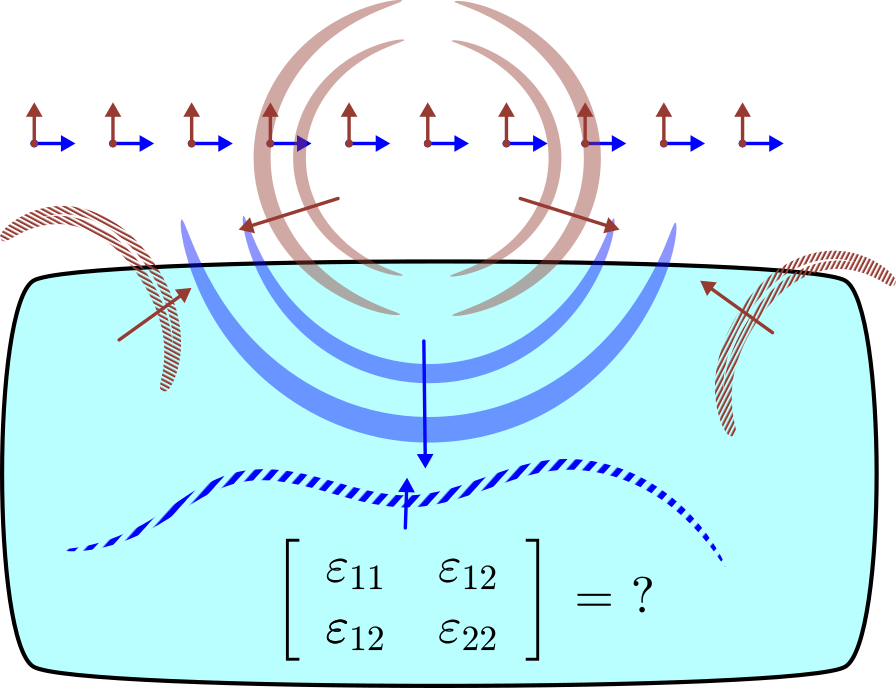}
\end{center}
\vspace{-0.in}
\caption{Illustration of  the setup and data acquisition with an active array of antennas that generate probing waves and measure the backscattered waves. }
\label{fig:1}
\end{figure*}

The nonlinear least squares data fit optimization formulation~\eqref{eq:FWI2} of inverse wave scattering is known as ``full waveform inversion" (FWI). It has drawn a lot of attention especially in the geophysics community, where it is studied mostly for acoustic and elastic waves~\cite{virieux2009overview}. The FWI formulation is attractive because it is robust to Gaussian additive noise and applies to an arbitrary data acquisition geometry, although the result is dependent of it. In particular, it is easier to invert from measurements gathered all around 
$\Omega$, so that both the transmitted and backscattered waves can be recorded. Having very low frequencies in the bandwidth of the probing pulse also
helps~\cite{bunks1995multiscale,chen1997inverse}. In most applications, low frequencies are not available and the placement of the 
sensors is limited to one side of $\Omega$, see Fig. \ref{fig:1}, so only the backscattered waves can be measured.  These restrictions and  the complicated nonlinear forward map  result 
in an FWI objective function~\eqref{eq:FWI2} that has numerous spurious local minima near and far from the true 
$\bc$~\cite{virieux2009overview,engquist2022optimal,huang2018source}. This pathological behavior is known as ``cycle skipping" and means that  gradient based optimization methods, like Gauss-Newton, may not give good results even for a good initial guess.  Cycle skipping has motivated the pursuit of better objective functions obtained, for example, by replacing the $L^2$ norm in~\eqref{eq:FWI2} with the Wasserstein metric~\cite{EngquistFroese,engquist2022optimal} or by introducing in a systematic way additional degrees of freedom in the optimization, known as ``extended FWI"~\cite{huang2018source,herrmann2013,symes2022error}. 

The linearization of the FWI objective function about the coefficients of a given reference medium, which is typically homogeneous, 
leads to a popular qualitative, aka imaging method, known as reverse time migration in the geophysics literature~\cite{biondi20063d,bleistein2001multidimensional,symes2008migration} or backprojection in radar~\cite{curlander1991synthetic,cheney2009fundamentals}. This method succeeds in locating the rough part of the medium, called the reflectivity,  if the smooth part of the medium, which determines the kinematics of wave propagation, 
is known. However, even with known kinematics, the  images display ``multiple artifacts", due to multiple scattering effects that are ignored by the linearization. The mitigation of such effects has been addressed partially in ~\cite{verschuur2013seismic,weglein1997inverse,borcea2012filtering,herrmann2007non}.

Our goal in this paper is to introduce a reduced order model approach for both imaging and  inverse 
wave scattering with electromagnetic waves in lossless, non-magnetic, anisotropic media. The reduced order model (ROM) is an algebraic dynamical system that 
captures the wave propagation at discrete time steps $t_j = j \tau$, separated by a time interval $\tau$ that is chosen according to the Nyquist sampling criterium for the frequency content of $f(t)$. The ROM is obtained via Galerkin 
projection of an exact time stepping scheme for equation~\eqref{eq:We1}, on the  time grid $\{t_j\}_{j \ge 0}.$ The projection space is spanned by the snapshots of the electric field at the first $n$ time instants and for $\bx \in \Omega$. These snapshots are not known, because the medium is not known inside $\Omega$. Nevertheless, the ROM can be computed in a data driven way, from the array response matrices $\boldsymbol{\mathcal{W}}(t_j)$ at $j = 0, \ldots, 2n-1$. The number $n$ is chosen so that over the duration $2 n \tau$, the waves can travel to the desired depth in the medium, scatter along the way and then travel back to the array, where they are recorded. 

The ROM computation is an extension of that introduced in~\cite{borcea2020reduced,borcea2019robust,DtB,druskin2016direct}
for the acoustic wave equation. We describe it in section~\ref{sect:ROM_oONSTR}. Then, we explain in section~\ref{sect:INT_WAVE} how we can use the ROM to estimate the snapshots at points in $\Omega$. 
We use this estimate in section~\ref{sect:IMAGE} to formulate a qualitative i.e., imaging method, and in section~\ref{sect:INVERSE}
to formulate a quantitative i.e., inversion method. These two methods can be used in conjunction, as we explain in section 
\ref{sect:CONJ}. The performance of the methods is illustrated with numerical simulations in sections \ref{sect:IMAGE}--\ref{sect:CONJ}. We end with a summary in section~\ref{sect:SUM}.

\section{ROM for electromagnetic wave propagation}
\label{sect:ROM_oONSTR}
The derivation of the ROM  involves several steps. The first, described in section~\ref{sect:Transf},  introduces a transformation  of the wave field, that leads to a convenient form of the wave equation, with a self-adjoint operator that has $\bc$ dependent coefficients. This operator has a nontrivial null space, 
but we explain in section~\ref{sect:waveDec} that the wave components in this space are short lived and can be removed from the measurements and the computation of the ROM. This computation is based on a Galerkin projection of an exact time stepping scheme given in section~\ref{sect:TimeStep}. The data driven ROM computation is in section~\ref{sect:ROMComp}. 
We assume there ideal and  noiseless measurements, but we explain in section~\ref{sect:ROMnoise} how to regularize the 
ROM computation so that noise and other factors are mitigated. 

\subsection{Transformation of the wave field}
\label{sect:Transf}
We assume that the medium is known, isotropic  and homogeneous near $\partial \Omega$ and in the vicinity of the array, with dielectric permittivity $\eps_o \underline{\underline{\bI}}$, which defines the reference, scalar wave speed 
$
c_o= {1}/{\sqrt{\mu_o \eps_o}}.
$
Here vicinity means within a distance $2 c_o T_f$ traveled by the waves over the duration of the probing pulse $f$. 
At larger distance the medium is anisotropic and heterogeneous, with variable wave speed $\bc$.

To derive the expression of the ROM, we will use functional calculus on the operator  $\bc^2 \nabla^\perp [ \nabla^\perp \cdot ]$
with domain 
\begin{equation}
\itbf{H}_0(\nabla^\perp;\Omega) = \left\{ \bpsi(\bx) \in \bL^2(\Omega): ~ \nabla^\perp \cdot \bpsi(\bx) \in \bL^2(\Omega), ~~ 
\bn^\perp \cdot \bpsi |_{\partial \Omega} = 0 \right\}.
\end{equation}
This operator 
 is the two dimensional version of the operator $\bc^2\mbox{curl}\, \mbox{curl}$ with domain  ${\itbf{H}}_0(\mbox{curl};\Omega)$ \cite[Theorem 3.33]{monk2003finite}. 
 
 Note that $\bc^2 \nabla^\perp [ \nabla^\perp \cdot ]$ is symmetric with respect to the inner product weighted by $\bc^{-2}$. We prefer to work with the Euclidian inner product, so we define 
\begin{equation}
\bU^{(s,p)}(t,\bx) = c_o \bc^{-1}(\bx) \bE^{(s,p)}(t,\bx), \quad t \in \RR, ~ \bx \in \Omega,
\label{eq:defu}
\end{equation}
which equals the electric field in the vicinity of the array, because $\bc = c_o \underline{\underline{\bI}}$ there. 
We derive from ~\eqref{eq:6},~\eqref{eq:We1} and~\eqref{eq:We6}, with excitation~\eqref{eq:Ant1}, the equation
\begin{equation}
\left(A + \partial_t^2 \right)\bU^{(s,p)}(t,\bx) = -c_o^2 f'(t) F^{(s)}(\bx) \be_p, \quad t > 0, ~ \bx \in \Omega,
\label{eq:u1}
\end{equation}
with homogeneous initial and boundary conditions
\begin{align}
\bU^{(s,p)}(t,\bx)&= {\bf 0}, \quad t  < -T_f, ~ \bx \in \Omega, \label{eq:u01}\\
\bn^\perp \cdot \bU^{(s,p)}(t,\bx) &= 0, \quad t \in \RR, ~ \bx \in \partial \Omega.
\label{eq:u3}
\end{align}
Here we introduced the operator 
\begin{equation}
A = - \bc(\bx) \nabla^\perp \left[ \nabla^\perp \cdot \left( \bc(\bx) \cdot \right) \right],
\label{eq:u4}
\end{equation}
with domain $\cD$ consisting of vector valued functions whose multiplication by $\bc$ lies in the space ${\itbf{H}}_0(\nabla^\perp;\Omega)$. 

\subsection{Wave decomposition and data transformation}
\label{sect:waveDec}
The operator $A$ defined in~\eqref{eq:u4} is self-adjoint and positive semidefinite. Its null space  is a closed subspace of $\cD$ and consists of functions of the form 
$\bc^{-1} \nabla {N}$, where ${N}$ is scalar valued in $H^1(\Omega)$, with constant trace at $\partial \Omega$ \cite[Lemma 4.5]{monk2003finite}.

The 
solution of~\eqref{eq:u1}--\eqref{eq:u3} has the Helmholtz decomposition \cite[Lemma 4.5]{monk2003finite}
\begin{equation}
\bU^{(s,p)}(t,\bx) = \bbU^{(s,p)}(t,\bx) + \bc^{-1}(\bx) \nabla N^{(s,p)}(t,\bx),
\label{eq:u5}
\end{equation}
where $\bc^{-1} \bbU^{(s,p)}$ is divergence free, in the weak sense. The first term in this decomposition solves the wave equation
\begin{equation}
\left( A + \partial_t^2 \right) \bbU^{(s,p)}(t,\bx) = -c_o^2 f'(t) \bbF^{(s,p)}(\bx),
\quad t \in \RR, ~ \bx \in \Omega,
\label{eq:u7}
\end{equation}
driven by the divergence free part $\bbF^{(s,p)}$ of the excitation, the projection of  $F^{(s)}(\bx) \be_p$ on $\mbox{range}(A)$.
This range is a closed subspace of $\bL^2(\Omega)$, consisting of functions $\bpsi$ such that $\bc^{-1} \bpsi $ is divergence free, in the weak sense.  According to \cite[Section 6.6]{mcowen2004partial} or by direct calculation, one can verify that $\left[\mbox{range}(A)\right]^\perp = \mbox{null}(A).$

The second term 
in~\eqref{eq:u5} lies in $\mbox{null}(A)$ and solves
\begin{equation}
\partial_t^2 \nabla \cdot \big[\bc^{-2}(\bx) \nabla N^{(s,p)}(t,\bx)\big] = - c_o f'(t) \partial_{x_p}F^{(s)}(\bx),   \quad t \in \RR,\,  \bx \in \Omega,
\label{eq:u8}
\end{equation}
with  homogeneous initial condition at $t < -T_f$. This term is  short lived, because when we integrate equation~\eqref{eq:u8} in time and use the initial condition, we obtain 
\begin{equation}
\nabla \cdot \big[\bc^{-2}(\bx) \nabla N^{(s,p)}(t,\bx)\big] = - c_o \int_{-T_f}^t dt' \, f(t') \partial_{x_p} F^{(s)}(\bx).
\label{eq:u9}
\end{equation}
The right hand side  of this equation vanishes for $t > T_f$, due to the assumption~\eqref{eq:pulsezero}.  

Since the  
medium is known and homogeneous within a distance $2 c_o T_f$ from the array, we can compute $\bU^{(s,p)}$ at 
$t \in (-T_f,T_f)$ and remove its component in the null space.  The array response matrix  is then transformed to 
$\mathbb{W}(t) \in \RR^{2m \times 2m}$, with entries 
\begin{equation}
\mathbb{W}^{(s',p'),(s,p)}(t) = \be_{p'}^T \int_{\Omega} d \bx \, F^{(s')}(\bx)  \bbU^{(s,p)}(t,\bx) \approx 
\be_{p'}^T \bbU^{(s,p)}(t,\bx_{s'}),
\label{eq:newM}
\end{equation}
for $s,s' = 1, \ldots, m$ and $p,p' = 1,2.$ 

Once we have removed the null space components of the wave, we can work with the operator  $\cA$, the restriction of $A$ to $\cD \setminus \mbox{null}(A)$. This operator is  positive definite, with compact and self-adjoint resolvent \cite[Section 4.7]{monk2003finite}. It has a countable infinite set of positive eigenvalues $(\theta_n)_{n \ge 1}$ sorted in increasing order, with $\theta_n \to \infty$ as $n \to \infty$, and the eigenfunctions $(\bphi_n)_{n \ge 1}$ form an orthonormal basis of $\mbox{range} (A)$. Functional calculus on $\cA$ is defined as usual: If $\Psi:\mathbb{C} \mapsto \mathbb{C}$ is a continuous function, then $\Psi(\cA)$ is the operator with the same eigenfunctions as $\cA$ and the eigenvalues $\Psi(\theta_n)$, for $n \ge 1$.

Let us write the solution of equation~\eqref{eq:u7} as a  series\footnote{The square bracket equals 
$ \int_{\Omega} d \bx' \, \bbF^{(s,p)}(\bx') \cdot  \bphi_j(\bx')$, because $F^{(s)}(\bx) \be_p -  \bbF^{(s,p)} \in 
\left[\mbox{range}(A)\right]^\perp$ and $\bphi_j \in \mbox{range}(A)$.}
 using the spectum of $\cA$,
\begin{equation*}
\hspace{-0.1in}\bbU^{(s,p)}(t,\bx) = -c_o^2 f(t) \star_t 1_{[0,\infty)}(t) \sum_{j=1}^\infty  \cos (t \sqrt{\theta_j}\, ) \Big[\be_p^T \int_{\Omega} d \bx' \,
F^{(s)}(\bx') \bphi_j(\bx')\Big] \bphi_j(\bx),
\end{equation*}
where $1_{[0,\infty)}$ is the indicator function of the interval $[0,\infty)$ equal to $1$ if $ t \ge 0$ and zero otherwise and 
$\star_t$ denotes convolution in $t$. 
For the derivation of the ROM, it is convenient to work with the wave 
\begin{align}
\bbU^{(s,p)}_e(t,\bx) &= -c_o^{-2} \left[f(-t)\star_t\bbU^{(s,p)}(t,\bx) + f(t)\star_t\bbU^{(s,p)}(-t,\bx)\right]\nonumber \\
&= \sum_{j=1}^\infty \cos (t \sqrt{\theta_j}\, ) |\hat f (\sqrt{\theta_j}\, )|^2 \Big[\be_p^T \int_{\Omega} d \bx' \,
F^{(s)}(\bx') \bphi_j(\bx')\Big]\bphi_j(\bx),
\label{eq:waveEv}
\end{align}
where the last equality is obtained by writing the time convolutions  in the Fourier domain, using the Fourier transform formula
\[
\int_{-\infty}^\infty dt \, 1_{[0,\infty)}(t)  \cos (t \sqrt{\theta_j}) e^{i \om t} = \frac{\pi}{2} \left[ \delta (\om - \sqrt{\theta_j}\, )
+ \delta (\om + \sqrt{\theta_j}\, )\right] + \frac{ i \om}{\theta_j - \om^2}\, ,
\]
and also that $|\hat f(\om)|$ is even.


The purpose of introducing the even in time wave $\bbU_e^{(s,p)}$ is twofold: 
First,   the transformation in~\eqref{eq:waveEv} is like a Duhamel principle which maps the source excitation to an initial condition. Indeed, the right hand side in~\eqref{eq:waveEv}
equals 
\begin{equation}
 \bbU^{(s,p)}_e(t,\bx) = | \hat f ( \sqrt{\cA}\,  ) | \bu^{(s,p)}(t,\bx),
\label{eq:Sn0}
\end{equation}
where 
\begin{equation}
\bu^{(s,p)}(t,\bx)  = \cos (t \sqrt{\cA} \, ) \bu^{(s,p)}_0(\bx), 
\label{eq:Sn}
\end{equation}
solves the homogeneous wave equation 
\begin{equation}
[\cA + \partial_t^2] \bu^{(s,p)}(t,\bx) = 0, \quad t > 0, ~~ \bx \in \Omega,
\label{eq:Sn1}
\end{equation}
with initial condition
\begin{equation}
\bu^{(s,p)}(0,\bx) = \bu_0^{(s,p)}(\bx), \quad \partial_t \bu^{(s,p)}(0,\bx) = 0, \quad \bx \in \Omega,
\label{eq:Sn2}
\end{equation}
defined by
\begin{align}
\bu_0^{(s,p)}(\bx)&= \sum_{j=1}^\infty | \hat f(\sqrt{\theta_j}\, ) | \Big[\be_p^T \int_{\Omega} d \bx' \,
F^{(s)}(\bx') \bphi_j(\bx')\Big]  \bphi_j(\bx).
\label{eq:DM3}
\end{align}
The second purpose of~\eqref{eq:waveEv} is that the measurements~\eqref{eq:newM}, which are approximately 
point evaluations of the wave fields,  can be mapped to a new data matrix 
\begin{equation}
\bbD(t) = -c_o^{-2}\left[f(-t) \star_t \mathbb{W}(t) + f(t) \star_t \mathbb{W}(-t)\right],
\label{eq:newD}
\end{equation}
whose entries can be expressed as the inner products 
\begin{align} 
\mathbb{D}^{(s',p'),(s,p)}(t) &= \int_{\Omega} d \bx \, \be_{p'}^T F^{(s')}(\bx)  \bbU^{(s,p)}_e(t,\bx) \nonumber \\
&\hspace{-0.25in}\stackrel{\eqref{eq:Sn0},\eqref{eq:DM3}}{=}  \int_{\Omega} d \bx \, \left[\bu^{(s',p')}_0(\bx) \right]^T   \bu^{(s,p)}(t,\bx),
\label{eq:Dinn}
\end{align}
for $s,s' = 1, \ldots, m$ and $p,p' = 1,2$. As we shall see, the matrices arising in the Galerkin projection scheme that define our ROM also involve such inner products, which is why we can compute the ROM in a data driven way.

Note that the second term in definition~\eqref{eq:newD}  contributes only at $t < 2 T_f$ and it can be determined 
from the measurements if the recordings start at $t =-T_F$. Otherwise, we can compute it  because the medium is known and homogeneous near the array. We work henceforth with the data matrix $\bbD$ and show below how we can compute the ROM from it.

\subsection{Wave snapshots and time stepping}
\label{sect:TimeStep}
We are interested in the evolution of the wave fields~\eqref{eq:Sn} on a uniform time grid $\{t_j = j \tau, j \ge 0\}$, with step  chosen according to the Nyquist criterium  $\tau \lesssim \pi/\om_o$. To avoid heavy index notation, we use block algebra and define 
the $2 \times 2m$ dimensional  field, called the wave snapshot at time $t_j$,
\begin{align}
\bu_j(\bx) &= \left( \bu^{(1,1)}(t_j,\bx), \bu^{(1,2)}(t_j,\bx), \ldots, \bu^{(m,1)}(t_j,\bx), \bu^{(m,1)}(t_j,\bx)\right) \nonumber \\
&\hspace{-0.07in}\stackrel{\eqref{eq:Sn}}{=}\cos (j\tau \sqrt{\cA} \, ) \bu_0(\bx), \quad \bx \in \Omega, ~~ j \ge 0.
\label{eq:Sn3}
\end{align}

We can now see the advantage of the transformations 
in the previous section: The trigonometric identity 
$
\cos[(j+1) \alpha] + \cos[|j-1|\alpha] = 2 \cos(\alpha) \cos(j \alpha),$ for  $\alpha \in \RR,
$
and functional calculus give that the snapshots satisfy the exact time stepping scheme
\begin{equation}
\bu_{j+1}(\bx) = 2 \cP \bu_j (\bx) - \bu_{|j-1|}(\bx), \quad j \ge 0, ~ \bx \in \Omega,
\label{eq:Sn4}
\end{equation}
driven by the ``propagator operator" 
\begin{equation}
\cP = \cos (\tau \sqrt{\cA} \, ).
\label{eq:Sn5}
\end{equation}
Equation~\eqref{eq:Sn4} defines a discrete time dynamical system, with initial state $\bu_0$. The ROM derived next is an 
algebraic discrete dynamical system, obtained via Galerkin projection of~\eqref{eq:Sn4} on the space 
\begin{equation}
\mathscr{S} = \mbox{span} \{\bcU(\bx)\}, \quad \bcU(\bx) = \left( \bu_0(\bx),\ldots, \bu_{n-1}(\bx) \right), \quad \bx \in \Omega.
\label{eq:Sn6}
\end{equation}
The end time index $n$ is chosen according to the distance from the array at which we wish to image. If this distance is 
$L$, then we should have $n c_o \tau \gtrsim L$. 

\begin{rem} 
\label{rem:FullRank}
If the data are noiseless, the time step $\tau$ is chosen according to the Nyquist criterium and the 
antenna separation is of order $\la_o$, the dimension of $\mathscr{S}$ is typically $2 nm$. We assume that this is so for now, but we discuss later, in section~\ref{sect:ROMnoise},  how to deal with the case of linearly dependent snapshots.
\end{rem}

\subsection{Data driven ROM}
\label{sect:ROMComp}
The Galerkin approximation of the snapshots~\eqref{eq:Sn3} is defined in a standard way
\begin{equation}
\bu^{\rm Gal}_j(\bx) = \bcU(\bx) \boldsymbol{\mathfrak{g}}_j,  \quad j \ge 0, ~ \bx \in \Omega, \label{eq:DS8}
\end{equation}
where $\boldsymbol{\mathfrak{g}}_j \in \RR^{2mn \times 2m}$ are the  Galerkin coefficients, calculated so that when substituting
\eqref{eq:DS8} in~\eqref{eq:Sn4}, the residual is orthogonal to the approximation space $\mathscr{S}$. 

To write the Galerkin equation,  we use an orthonormal basis stored in the $2 \times 2nm$  dimensional field 
$
\bcV(\bx) =  \left( \bv_0(\bx),\ldots, \bv_{n-1}(\bx) \right),$ whose $2 \times 2m$ components satisfy 
\begin{equation}
\int_{\Omega} d \bx \, \bv_j^T(\bx) \bv_k(\bx) = \bI_{2m} \delta_{jk}, \quad j,k = 0, \ldots, n-1.
\label{eq:defV2}
\end{equation}
Here $\bI_{2m}$ denotes the $2m \times 2m $ identity matrix and $\delta_{jk}$ is the Kronecker delta symbol. We index the components 
$\bv_j$ of $\bcV$ the same way as the snapshots, because our basis is causal i.e., 
\begin{equation}
\bv_j(\bx) \in \mbox{span} \left\{ \bu_0(\bx), \ldots, \bu_j(\bx) \right\}, \quad j = 0, \ldots, n-1.
\label{eq:defV3}
\end{equation}
Explicitly, the definition of $\bcV$ is via the Gram-Schmidt orthogonalization of $\bcU$, 
\begin{equation}
\bcU(\bx) = \bV(\bx) \bR, \quad \bx \in \Omega,
\label{eq:defV4}
\end{equation}
where $\bR$ is block upper triangular, with $2m \times 2m$ blocks. 

The equation for the Galerkin coefficients can now be written as 
\begin{align}
{\bf 0} &= \int_{\Omega} d \bx \, \bV^T(\bx) \left[ \bcU(\bx) \left( \boldsymbol{\mathfrak{g}}_{j+1} + \boldsymbol{\mathfrak{g}}_{|j-1|} \right) - 2 \cP \bcU(\bx) \boldsymbol{\mathfrak{g}}_j \right] \nonumber \\
&= \bR^{-T} \left[{\mathbb{M}} \left( \boldsymbol{\mathfrak{g}}_{j+1} + \boldsymbol{\mathfrak{g}}_{|j-1|} \right) - 2 \mathbb{S}  \boldsymbol{\mathfrak{g}}_j \right], \quad j \ge 0,
\label{eq:Ga1}
\end{align}
with initial condition
\begin{equation}
\boldsymbol{\mathfrak{g}}_0 = \bi_0, \mbox{ such that} ~ \bu^{\rm Gal}_0(\bx) = \bu_0(\bx), \quad \bx \in \Omega.
\label{eq:Ga2}
\end{equation}
Here $\bR^{-T}$ is the transpose of the inverse of $\bR$ and we introduced the Gramian, also known as ``mass" matrix 
\begin{equation}
\mathbb{M} = \int_{\Omega} d \bx \, \bcU^T(\bx) \bcU(\bx) \in \RR^{2nm \times 2nm},
\label{eq:defmass}
\end{equation}
and the ``stiffness matrix" 
\begin{equation}
\mathbb{S} = \int_{\Omega} d \bx \, \bcU^T(\bx) \cP \bcU(\bx) \in \RR^{2nm \times 2nm},
\label{eq:defstiff}
\end{equation}
of the Galerkin scheme. We also denote  by $\bi_j$ the $(j+1)^{\rm th}$ column block of size $2nm \times 2m$, of the $2nm \times 2nm$ identity matrix $\bI_{2nm}$.

The following theorem is a straightforward generalization of the results in ~\cite{borcea2020reduced,borcea2019robust,DtB,druskin2016direct}, 
obtained for the scalar wave equation. We include its proof for the convenience of the reader. The result  says that  even though we do 
not know the snapshots stored in $\bcU$ and therefore the approximation 
space~\eqref{eq:Sn6}, all the terms in the Galerkin equation~\eqref{eq:Ga1} are data driven.

\vspace{0.05in}
\begin{theorem}
\label{thm.1} 
The $2m \times 2m$ blocks of the mass and stiffness matrices~\eqref{eq:defmass}-\eqref{eq:defstiff} are determined 
by the data matrices~\eqref{eq:newD} as follows
\begin{align}
\mathbb{M}_{j,l} &= \frac{1}{2} \left[ \mathbb{D}(t_{j+l}) + \mathbb{D}(t_{|j-l|})\right], \label{eq:calcGram} \\
 \mathbb{S}_{j,l} &= \frac{1}{4} \left[ \mathbb{D}(t_{j+l+1}) + \mathbb{D}(t_{|j-l-1|}) + \mathbb{D}(t_{j+l-1}) + \mathbb{D}(t_{|j-l+1|} )
 \right],
\label{eq:calcStif}
\end{align}
for $j, l = 0, \ldots, n-1$. The block upper triangular matrix $\bR$ in the Gram-Schmidt orthogonalization is the 
block Cholesky square root of the mass matrix\footnote{
To ensure a unique block Cholesky factorization, we take the symmetric, positive definite square root of the diagonal blocks. }
\begin{equation}
\mathbb{M} = \bR^T \bR.
\label{eq:Chol}
\end{equation}
The first $n$  Galerkin coefficients are the $2nm \times 2m$ block columns of $\bI_{2nm}$, 
\begin{equation}
\boldsymbol{\mathfrak{g}}_j = \bi_j, \quad j = 0, \ldots, n-1.
\label{eq:FirstGal}
\end{equation}
\end{theorem}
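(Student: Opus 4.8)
The plan is to reduce every quantity in the theorem to the matrix-valued data time series $\mathbb{D}(t_k)=\int_\Omega d\bx\,\bu_0^T(\bx)\bu_k(\bx)$ of~\eqref{eq:Dinn}, exploiting two structural facts: that the snapshots are $\bu_j=\cos(j\tau\sqrt{\cA}\,)\bu_0$ by~\eqref{eq:Sn3}, and that each $\cos(k\tau\sqrt{\cA}\,)$ is self-adjoint, being a real-valued function of the self-adjoint positive definite operator $\cA$ (the functional calculus acting column-wise on $\bu_0\in\mbox{range}(A)$). First I would write the mass block~\eqref{eq:defmass} as $\mathbb{M}_{j,l}=\int_\Omega d\bx\,\bu_j^T\bu_l$, move one cosine across the Euclidean inner product by self-adjointness to get $\int_\Omega d\bx\,\bu_0^T\cos(j\tau\sqrt{\cA}\,)\cos(l\tau\sqrt{\cA}\,)\bu_0$, and collapse the product with the identity $\cos a\cos b=\tfrac12[\cos(a+b)+\cos(a-b)]$. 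Using evenness of cosine to turn $j-l$ into $|j-l|$ and recognizing the two terms as $\mathbb{D}(t_{j+l})$ and $\mathbb{D}(t_{|j-l|})$ yields~\eqref{eq:calcGram}.

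The stiffness block~\eqref{eq:defstiff} is identical except that the self-adjoint propagator $\cP=\cos(\tau\sqrt{\cA}\,)$ sits between the two snapshots, so after the same step I would evaluate the triple product $\cos(j\tau\sqrt{\cA}\,)\cos(\tau\sqrt{\cA}\,)\cos(l\tau\sqrt{\cA}\,)$ by applying the product-to-sum identity twice. This produces four cosines with indices $j+l+1$, $j-l-1$, $j+l-1$, $j-l+1$; taking the $\bu_0^T(\cdot)\bu_0$ inner product and again invoking evenness to supply the absolute values gives~\eqref{eq:calcStif}. The only thing to watch is that a few indices (e.g. $j+l-1$ at $j=l=0$) are negative, which is harmless because $\mathbb{D}(t_k)$ is even in $k$; I expect this mild index bookkeeping to be the sole nuisance, the argument otherwise being routine functional calculus.

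The last two claims are linear algebra. Since the Gram--Schmidt factorization~\eqref{eq:defV4} gives $\bcU=\bV\bR$ with $\int_\Omega d\bx\,\bV^T\bV=\bI_{2nm}$ from~\eqref{eq:defV2} and $\bR$ block upper triangular, substituting into~\eqref{eq:defmass} gives $\mathbb{M}=\bR^T\big(\int_\Omega\bV^T\bV\big)\bR=\bR^T\bR$, which is the block Cholesky factorization~\eqref{eq:Chol} once the diagonal blocks are fixed symmetric positive definite. For the Galerkin coefficients, the key observation is that for $j\le n-2$ all three snapshots $\bu_{j+1},\bu_j,\bu_{|j-1|}$ are columns of $\bcU$, i.e. $\bcU\bi_k=\bu_k$, so the trial choice $\boldsymbol{\mathfrak{g}}_k=\bi_k$ makes the residual in~\eqref{eq:Ga1} equal to $\int_\Omega d\bx\,\bV^T[\bu_{j+1}+\bu_{|j-1|}-2\cP\bu_j]$, whose bracket vanishes identically by the exact time-stepping recursion~\eqref{eq:Sn4}. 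Thus $\boldsymbol{\mathfrak{g}}_k=\bi_k$ solves~\eqref{eq:Ga1} for these indices; combining this with the initial condition~\eqref{eq:Ga2} and the invertibility of $\mathbb{M}$ (Remark~\ref{rem:FullRank}), which makes the recursion uniquely solvable, an induction gives $\boldsymbol{\mathfrak{g}}_j=\bi_j$ for $j=0,\ldots,n-1$, establishing~\eqref{eq:FirstGal}.
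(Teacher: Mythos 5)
Your proposal is correct and follows essentially the same route as the paper: self-adjointness of the functional calculus plus the product-to-sum identity for the mass blocks, the Gram--Schmidt identity $\mathbb{M}=\bR^T\bR$ for the Cholesky claim, and uniqueness of the Galerkin recursion (via the full-rank mass matrix) combined with the exact time stepping~\eqref{eq:Sn4} for~\eqref{eq:FirstGal}. The only cosmetic difference is in~\eqref{eq:calcStif}, where the paper writes $\cP\bu_l=\frac{1}{2}\bigl[\bu_{l+1}+\bu_{|l-1|}\bigr]$ and reuses the mass-matrix computation while you expand the triple cosine product directly; these are the same trigonometric content, and your handling of negative indices via evenness of $\mathbb{D}$ (as well as your restriction of the residual check to $j\le n-2$) is sound.
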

\begin{proof}
Note that equation~\eqref{eq:FirstGal} combined with definition~\eqref{eq:DS8} give that the Galerkin approximation of the 
snapshots is exact for the first $n$ times instants. This seems natural because the first $n$ snapshots span the approximation space. However, for the result to hold, it is essential that the time stepping scheme~\eqref{eq:Sn4} is exact, so that 
with the coefficients~\eqref{eq:FirstGal} we have a zero residual. If we used an approximate time stepping, obtained 
via some finite difference approximation of $\partial_t^2$ in the wave equation, ~\eqref{eq:FirstGal} would not be satisfied.

The proof of~\eqref{eq:FirstGal} is as follows: According to Remark \ref{rem:FullRank}, $\mbox{dim} (\mathscr{S}) = 2nm$.
This means that the matrix $\bR$ in the Gram-Schmidt orthogonalization~\eqref{eq:defV4}  and the mass matrix are full rank. Then, equation~\eqref{eq:Ga1} with initial condition~\eqref{eq:Ga2} has a unique solution $\boldsymbol{\mathfrak{g}}_j$, for $ j \ge 1.$ 
Since 
\begin{equation*}
\bcU(\bx) (\bi_{j+1} + \bi_{|j-1|}) - 2 \cP \bcU(\bx) \bi_j = \bu_{j+1} + \bu_{|j-1|} - 2 \cP \bu_j \stackrel{\eqref{eq:Sn4}}{=} {\bf 0}, \quad j = 0, \ldots, n-1,
\end{equation*}
the coefficients  $\boldsymbol{\mathfrak{g}}_j = \bi_j$ solve~\eqref{eq:Ga1} for $j = 0, \ldots, n-1$. This proves~\eqref{eq:FirstGal}.

The block Cholesky factorization~\eqref{eq:Chol}  is deduced from the definition~\eqref{eq:defmass} of the mass matrix and 
the Gram-Schmidt equation~\eqref{eq:defV4}
\begin{equation*}
\mathbb{M} = \bR^T \int_{\Omega} d \bx \, \bV^T(\bx) \bV(\bx) \bR \stackrel{\eqref{eq:defV2}}{=} \bR^T \bR.
\end{equation*}

The expression~\eqref{eq:calcGram} of the blocks of  $\mathbb{M}$ is derived using the 
definition of the snapshots and the symmetry of $\cA$, as follows
\begin{align*} 
\mathbb{M}_{j,l} &\stackrel{\eqref{eq:defmass}}{=} \int_{\Omega} d \bx \, \bu_j^T(\bx) \bu_l(\bx) \\
 &\stackrel{\eqref{eq:Sn3}}{=} \int_{\Omega} d \bx \, \bu_0^T(\bx) \cos( j \tau \sqrt{\cA} \, ) \cos( l  \tau\sqrt{\cA} \, ) 
 \bu_0(\bx) \\
 &\hspace{0.08in}=  \int_{\Omega} d \bx \, \bu_0^T(\bx) \frac{1}{2}\left[ \cos( (j+l) \tau \sqrt{\cA} \, ) + \cos( |j-l| \tau \sqrt{\cA} \, )
 \right] \bu_0(\bx) \\
 &\stackrel{\eqref{eq:Sn3},\eqref{eq:Dinn}}{=} \frac{1}{2} \left[ \mathbb{D}(t_{j+l}) + \mathbb{D}(t_{|j-l|})\right], \quad j, l = 0,\ldots, n-1.
 \end{align*}
Finally, the blocks of the stiffness matrix are 
\begin{align*} 
\mathbb{S}_{j,l} &\stackrel{\eqref{eq:defmass}}{=} \int_{\Omega} d \bx \, \bu_j^T(\bx) \cP \bu_l(\bx) \stackrel{\eqref{eq:Sn4}}{=} \int_{\Omega} d \bx \, \bu_j^T(\bx) \frac{1}{2}\left[ \bu_{l+1}(\bx) + \bu_{|l-1|}(\bx) \right],
\end{align*}
and the result~\eqref{eq:calcStif} follows from the calculation above. \qed
\end{proof}

\subsubsection{The ROM} We define the ROM as an algebraic, discrete time dynamical system analogue of~\eqref{eq:Sn4}, 
with states 
\begin{equation}
\buR_j = \bR \boldsymbol{\mathfrak{g}}_j, \quad j \ge 0,
\label{eq:ROMsnap}
\end{equation}
called the ROM snapshots. The first $n$ such snapshots are the $2nm \times 2m$ block columns of $\bR$, 
as follows from equation~\eqref{eq:FirstGal},
\begin{equation}
\left(\buR_0, \ldots, \buR_{n-1} \right) = \bR.
\label{eq:ROM1}
\end{equation}
The evolution of the ROM snapshots is dictated by the equation 
\begin{equation}
\buR_{j+1} = 2 \cPR \buR_j - \buR_{|j-1|}, \quad j \ge 0,
\label{eq:ROM2}
\end{equation}
derived from~\eqref{eq:Ga1} and the Cholesky factorization~\eqref{eq:Chol}, where 
\begin{equation}
\cPR = \bR^{-T} \mathbb{S} \bR^{-1}
\label{eq:ROM3}
\end{equation}
is the $2nm \times 2nm$ ROM propagator matrix.

By  Theorem \ref{thm.1}, $\bR$ and $\mathbb{S}$ are data driven, so equations~\eqref{eq:ROMsnap} and~\eqref{eq:ROM3} 
give that the ROM can be computed directly from $\mathbb{D}(t_j)$, for $j = 0, \ldots, 2n-1$. 

\subsubsection{Properties of the ROM} If we solve for $\bR$ in the Gram-Schmidt orthogonalization equation~\eqref{eq:defV4}
and then use the result in the definition~\eqref{eq:ROMsnap} of the ROM snapshots, we obtain that 
\begin{equation}
\buR_j = \int_{\Omega} d \bx \, \bV^T(\bx) \bU(\bx) \boldsymbol{\mathfrak{g}}_j \stackrel{\eqref{eq:DS8}}{=} 
\int_{\Omega} d \bx \, \bV^T(\bx) \bu^{\rm Gal}_j(\bx), \quad j \ge 0.
\label{eq:ROM4}
\end{equation}
Thus, the ROM snapshots are the orthogonal projection of the Galerkin approximation of the 
snapshots on the space $\mathscr{S}$ defined in~\eqref{eq:Sn6}. Since this approximation is exact at the first $n$ time instants, we deduce from this equation and~\eqref{eq:defV4} that 
\begin{equation}
\buR_j = \int_{\Omega} d \bx \, \bV^T(\bx) \bu_j(\bx) \quad \mbox{and} \quad \bu_j(\bx) = \bV(\bx) \buR_j, \quad j = 0, \ldots, n-1.
\label{eq:ROM5}
\end{equation}

The ROM propagator matrix defined in~\eqref{eq:ROM3} equals the orthogonal projection of the propagator operator~\eqref{eq:Sn5} on $\mathscr{S}$, 
\begin{align}
\hspace{-0.1in}\cPR = \bR^{-T} \mathbb{S} \bR^{-1} &\stackrel{\eqref{eq:defstiff}}{=} 
\int_{\Omega} d \bx \, \bR^{-T} \bU^T(\bx) \cP \bU(\bx) \bR^{-1}  \stackrel{\eqref{eq:defV4}}{=}
\int_{\Omega} d \bx \, \bV^T(\bx) \cP \bV(\bx).
\label{eq:ROM6}
\end{align}
It is a symmetric $2nm \times 2nm$ matrix, with block tridiagonal structure.  One way to deduce this structure is to follow the proof in \cite[Appendix C]{borcea2020reduced}. Alternatively, the result follows by iterating equation~\eqref{eq:ROM2} for $j = 0, \ldots, n-2$ 
and using~\eqref{eq:ROM1}: If we count the $2m \times 2m$ blocks of $\cPR$ as $\cPR_{j,l}$, with $j, l = 0, \ldots, n-1$, and we let $\bR_{j,l}$ be the blocks of $\bR$, which are non-zero for $0 \le j \le l \le n-1$, 
we obtain from 
\eqref{eq:ROM2} evaluated at $j = 0$ that 
\[
2 \cPR \bR \bi_0 = 2 \begin{pmatrix} \cPR_{0,0} \bR_{0,0}\\ \cPR_{1,0} \bR_{0,0}\\ \vdots \\
\cPR_{n-1,0} \bR_{0,0} \end{pmatrix}  
= 2 \bR \bi_1 = 2 \begin{pmatrix} \bR_{0,1} \\ \bR_{1,1} \\ {\bf 0} \\ \vdots \\ {\bf 0} \end{pmatrix}.
\]
The block $\bR_{0,0}$ is invertible, because $\bR$ is invertible, so we must have 
$
\cPR_{j,0} = \cPR_{0,j} = {\bf 0}$ for $j  = 2, \ldots, n-1.$ 
The next step, obtained from~\eqref{eq:ROM2} evaluated at $j = 1$ gives 
\[
2 \cPR \bR \bi_1 = \bR \bi_2 + \bR \bi_0 \quad \Longrightarrow \quad 
\cPR_{j,1} \bR_{1,1} = {\bf 0}, \quad j = 3, \ldots, n-1,
\]
which means, since $\bR_{1,1}$ is invertible, that 
$
\cPR_{j,1} = \cPR_{1,j} = {\bf 0},$ for $j  = 3, \ldots, n-1.
$
Proceeding this way, until we reach $j = n-2$ in equation~\eqref{eq:ROM2}, we 
obtain that $\cPR$ is indeed block tridiagonal.

The ROM interpolates the time dependent data matrix $\mathbb{D}$ from which it is computed. Specifically, 
for $j = 0, \ldots, n-1$, we have 
\begin{align}
\mathbb{D}(t_j) &\stackrel{\eqref{eq:Dinn}}{=} \int_{\Omega} d \bx \, \bu_0^T (\bx) \bu_j(\bx) 
\stackrel{\eqref{eq:ROM5}}{=} (\buR_0)^T \int_{\Omega} d \bx \bV^T(\bx) \bV(\bx) \buR_j \stackrel{\eqref{eq:defV2}}{=} (\buR_0)^T \buR_j,  \label{eq:ROM7}
\end{align}
and for $j = 1, \ldots, n-1$ we have 
\begin{align}
\mathbb{D}(t_{n-1+j}) &\stackrel{\eqref{eq:calcGram}}{=} 2 \mathbb{M}_{n-1,j} - \mathbb{D}(t_{n-1-j}).
\label{eq:ROM7.1}
\end{align}
The second term in the right hand side of this equation is matched as in~\eqref{eq:ROM7}
and the first term is 
\begin{align}
\mathbb{M}_{n-1,j} &= \int_{\Omega} d \bx \, \bu_{n-1}^T(\bx) \bu_j(\bx) \hspace{-0.06in}\stackrel{\eqref{eq:ROM5}}{=}\hspace{-0.06in}
(\buR_{n-1})^T \hspace{-0.06in}\int_{\Omega} d \bx \bV^T(\bx) \bV(\bx) \buR_j  \hspace{-0.04in}\stackrel{\eqref{eq:defV2}}{=} \hspace{-0.06in}(\buR_{n-1})^T \buR_j.
\label{eq:ROM8}
\end{align}
Equations~\eqref{eq:ROM7}--\eqref{eq:ROM8} show that the ROM snapshots give exactly the 
matrices $\mathbb{D}(t_j)$ at instants $t_j = j \tau$, for $j = 0, \ldots, 2n-2.$ In fact, with a bit more work, 
one can show that this result extends one more step, to $j = 2n-1$ \cite[Appendix B]{borcea2020reduced}.

\subsection{Regularization of the ROM computation}
\label{sect:ROMnoise}
There are two critical and nonlinear steps in the computation of the ROM: The Cholesky factorization~\eqref{eq:Chol} of 
$\mathbb{M}$ and the inversion of the block Cholesky square root $\bR$. Both steps require a symmetric positive definite 
$\mathbb{M}$ computed from the data as in equation~\eqref{eq:calcGram}. 

In the absence of noise, the time dependent 
matrix $\mathbb{D}$, which is obtained from the array response matrix as in equation~\eqref{eq:newD}, is symmetric due to 
source/receiver reciprocity. This ensures the symmetry of $\mathbb{M}$. The matrix $\mathbb{D}$ is not symmetric for noisy measurements, but once we compute 
$\mathbb{M}$ from equation~\eqref{eq:calcGram}, we can just take $(\mathbb{M} + \mathbb{M}^T)/2$ to obtain a symmetric 
mass matrix.

If the time step $\tau$  and/or the separation between the antennas are chosen too small, then $\mathbb{M}$ is singular in finite precision arithmetic. For noisy measurements, even after the symmetrization, $\mathbb{M}$ is likely indefinite. Thus, the ROM construction requires a regularization procedure, which maps $\mathbb{M}$ to a symmetric, positive definite $\mathbb{M}^{{\rm reg}}$. 

There are various ways to carry out the regularization. Each one must ensure the correct causal structure of the ROM, 
which is manifested algebraically in the block upper triangular square root $\bR$ of $\mathbb{M}$ and the  tridiagonal structure of the ROM propagator matrix $\cPR$. We have tried the following two regularization approaches: 

\vspace{0.05in}
\begin{enumerate} 
\itemsep 0.05in
\item Replace $\mathbb{D}(0)$ by $(1+ 2 \alpha)\mathbb{D}(0)$, which according to~\eqref{eq:calcGram}, amounts to boosting the diagonal block of 
$\mathbb{M}$ by $\alpha \mathbb{D}(0)$, where $0 < \alpha \ll 1$.
\item Project $\mathbb{M}$ on the space spanned by the eigenvectors corresponding to the eigenvalues 
that exceed a user defined positive threshold.
\end{enumerate}

\vspace{0.05in} The advantage of the first regularization approach is that it is simple and preserves the causal algebraic 
structure of the ROM.  The matrix $\mathbb{D}(0)$ can be easily computed because 
it depends only on the medium near the array, which is known, isotropic and homogeneous. It is a positive definite 
$2m \times 2m$ matrix, so by adding $\alpha \mathbb{D}(0)$ to the diagonal of $\mathbb{M}$, we can get a positive definite 
$2nm \times 2nm$ regularized $\mathbb{M}^{{\rm reg}}$. The disadvantage is that the eigenvectors
of $\mathbb{M}^{{\rm reg}}$ corresponding to the smallest eigenvalues are significantly affected by noise, which can cause inversion artifacts.

The second regularization approach is more complicated, but it has the advantage that it removes the subspace  spanned by the 
``noisy eigenvectors". 
Let $\{\by_j\}_{j=1}^{2nm}$ be the orthonormal eigenfunctions of $\mathbb{M}$, corresponding to the eigenvalues counted in descending order. For regularization, we wish to keep the largest $2rm$ eigenvalues and associated eigenvectors, for $1 \le r < n$. The multiple $2m$ is 
needed here to carry out the block algebra calculations, with $2m \times 2m$ blocks. The projection of $\mathbb{M}$ on the 
space spanned by the leading eigenvectors is 
$
\boldsymbol{\Lambda} = \bY^T \mathbb{M} \bY,$ where $\bY = (\by_1, \ldots, \by_{2rm}).
$
The matrix $\boldsymbol{\Lambda}$  is positive definite, with spectrum that is weakly affected by noise. However, $\boldsymbol{\Lambda}$  is  diagonal, while the mass matrix should have block Hankel plus Toeplitz structure (recall~\eqref{eq:calcGram}), in order to get  a ROM that preserves causality at the algebraic level.  This causality is manifested in the block tridiagonal structure of the ROM propagator matrix.
To obtain the desired  $\mathbb{M}^{{\rm reg}}$,  we compute first
\begin{equation}
\Pi = \boldsymbol{\Lambda}^{-1/2}  \bY^T \mathbb{S} \bY \boldsymbol{\Lambda}^{-1/2},
\end{equation} 
and then use the block Lanczos algorithm~\cite{golubVanLoan} to obtain the orthogonal matrix $\mathbb{Q} \in \RR^{2rm \times 2rm}$ 
that gives a block tridiagonal $\mathbb{Q}^T \Pi \mathbb{Q}$. This is the regularized ROM propagator. The regularized mass matrix is 
\begin{equation}
\mathbb{M}^{{\rm reg}} = \mathbb{Q}^T \boldsymbol{\Lambda} \mathbb{Q}.
\end{equation}

\section{Estimation of the internal wave}
\label{sect:INT_WAVE}
It is known that knowledge of the internal wave field i.e., the snapshots $\bu_j$ at all points in $\Omega$, would simplify considerably the inverse problem. This is the whole point of   multi-physics ``hybrid" imaging modalities  
 like photo-acoustic tomography, transient elastography, etc.~\cite{nachman2009recovering,bal2013hybrid}. These modalities  are limited to medical applications because they involve delicate and accurate apparatus for transmitting several types of waves and measuring all around the body. It was shown recently in~\cite{borcea2022internal,borcea2023data}, in the context of inverse scattering with acoustic waves, that the ROM can be used to  estimate the internal wave. Here we extend those results to the vectorial problem, governed by equation~\eqref{eq:Sn1}.
 
 We begin in section~\ref{sect:INT_WAVE1} with the definition of the estimated snapshots  $\bu_j^{\rm est}$, followed by a numerical illustration in section~\ref{sect:INT_WAVE_Num}. Then, we describe in section~\ref{sect:INT_WAVE2} the relation between $\bu_j^{\rm est}$ and the dyadic Green's function. This relation sheds  light on the approximation of $\bu_j$ by  $\bu_j^{\rm est}$ and we use it in the analysis of the imaging method introduced in 
 section~\ref{sect:IMAGE}.
 
 \subsection{Estimated wave snapshots}
 \label{sect:INT_WAVE1}
  
The estimation of the snapshots $\bu_j$ is based on equation~\eqref{eq:ROM5}, which has two factors: The ROM snapshots 
$\{\buR_j = \bR \bi_j\}_{j=0}^{n-1}$,  which are computed from the data, and the $2 \times 2nm$ field $\bV$ that stores the orthonormal basis of the space $\mathscr{S}$. This field cannot be computed because $\mathscr{S}$ is not known, so we estimate the snapshots using the basis computed with the guess wave speed $\tilde \bc$,
\begin{equation}
\bu_j^{\rm est}(\bx;\tilde \bc) =  \bV(\bx;\tilde \bc)  \buR_j = \bV(\bx;\tilde \bc) \bR \bi_j, \quad j = 0, \ldots, n-1, ~~ \bx \in \Omega.
\label{eq:EstWave}
\end{equation}
We call the medium with wave speed $\tilde \bc$ the ``reference medium", but note that $\tilde \bc$ will change during our iterative approach to inversion, 
introduced in section~\ref{sect:INVERSE}.

A very popular imaging method, known as reverse time migration in the geophysics community 
\cite{biondi20063d,bleistein2001multidimensional,symes2008migration} or backprojection in radar~\cite{curlander1991synthetic,cheney2009fundamentals}, is based on the linearization of the forward map, known as the Born approximation. Imaging is carried out by backpropagating the measurements to the 
imaging points in $\Omega$, using the  internal wave computed in the reference medium. The snapshots of this wave satisfy \begin{equation}
\bu_j(\bx;\tilde \bc) =  \bV(\bx;\tilde \bc) \bR (\tilde \bc) \bi_j, \quad j = 0, \ldots, n-1, ~~ \bx \in \Omega.
\label{eq:BornWave}
\end{equation}

Equation~\eqref{eq:BornWave}
may look similar to~\eqref{eq:EstWave}, but there is a big difference: The matrix $\bR$, which stores the first $n$ ROM snapshots, 
is for the true and unknown medium, whereas in~\eqref{eq:BornWave} we have $\bR(\tilde \bc)$ computed in the reference medium. This difference is important, because the estimated snapshots~\eqref{eq:EstWave} are consistent with the measurements, 
whereas those in~\eqref{eq:BornWave} are not. Indeed, if we substitute the snapshots $\bu_j$ in equations~\eqref{eq:ROM7}--\eqref{eq:ROM8} with the estimated ones in~\eqref{eq:EstWave}, we obtain that the data interpolation still holds
\begin{align*}
 \int_{\Omega} d \bx \, \left[ \bu_0^{\rm est}(\bx;\tilde \bc) \right]^T \bu_j^{\rm est}(\bx;\tilde \bc) 
= (\buR_0)^T \underbrace{\int_{\Omega} d \bx \bV^T(\bx;\tilde \bc ) \bV(\bx;\tilde \bc )}_{\bI_{2nm}} \buR_j \stackrel{\eqref{eq:ROM7}}{=} \mathbb{D}(t_j), 
\end{align*}
for $j = 0, \ldots, n-1$. Moreover, we have 
\begin{align*}
 \int_{\Omega} d \bx \,\left[ \bu^{\rm est}_{n-1}(\bx;\tilde \bc)\right]^T \bu^{\rm est}_j(\bx;\tilde \bc) = (\buR_{n-1})^T \underbrace{\int_{\Omega} d \bx \bV^T(\bx;\tilde \bc) \bV(\bx;\tilde \bc)}_{\bI_{2nm}} \buR_j  = \mathbb{M}_{n-1,j},
\end{align*}
which in light of~\eqref{eq:ROM7.1} gives the interpolation of $\mathbb{D}(t_{n-1+j})$, for $j = 1, \ldots, n-1$.
If we replace $\bu_j^{\rm est}$ by ~\eqref{eq:BornWave} in the left hand side of these equations, we deduce the interpolation of $\mathbb{D}(t;\tilde \bc)$, not $\mathbb{D}(t)$. 

\subsection{Numerical illustration of the estimated internal wave}
\label{sect:INT_WAVE_Num}

The exact data fit equations above show  that  all the information about the waves recorded at the array is contained in $\bR$. The orthonormal basis stored in $\bV$ plays no role in the data interpolation. Its purpose is to map the information in $\bR$ from the algebraic space, to the physical space,  at points $\bx \in \Omega$. For acoustic waves, extensive numerical simulations~\cite{borcea2022internal,borcea2022waveform,druskin2018nonlinear,DtB,druskin2016direct}
and analysis in some special media 
\cite[Appendix A]{borcea2021reduced}  have shown that the orthonormal basis stored in $\bV$
depends mostly on the smooth part of the medium, which controls the kinematics of wave propagator. 
The analysis and conclusion in \cite[Appendix A]{borcea2021reduced} extend to the vector valued wave field $\bu$.
For brevity, we do not include it here.  Instead, we show  the results of a numerical simulation that illustrate the typical behavior 
of  the estimated internal wave.

\begin{figure}[h]
\centering
\begin{tabular}{cc}
Wave speed  & True Wave at point "+" \\
\includegraphics[width = 0.4\textwidth]{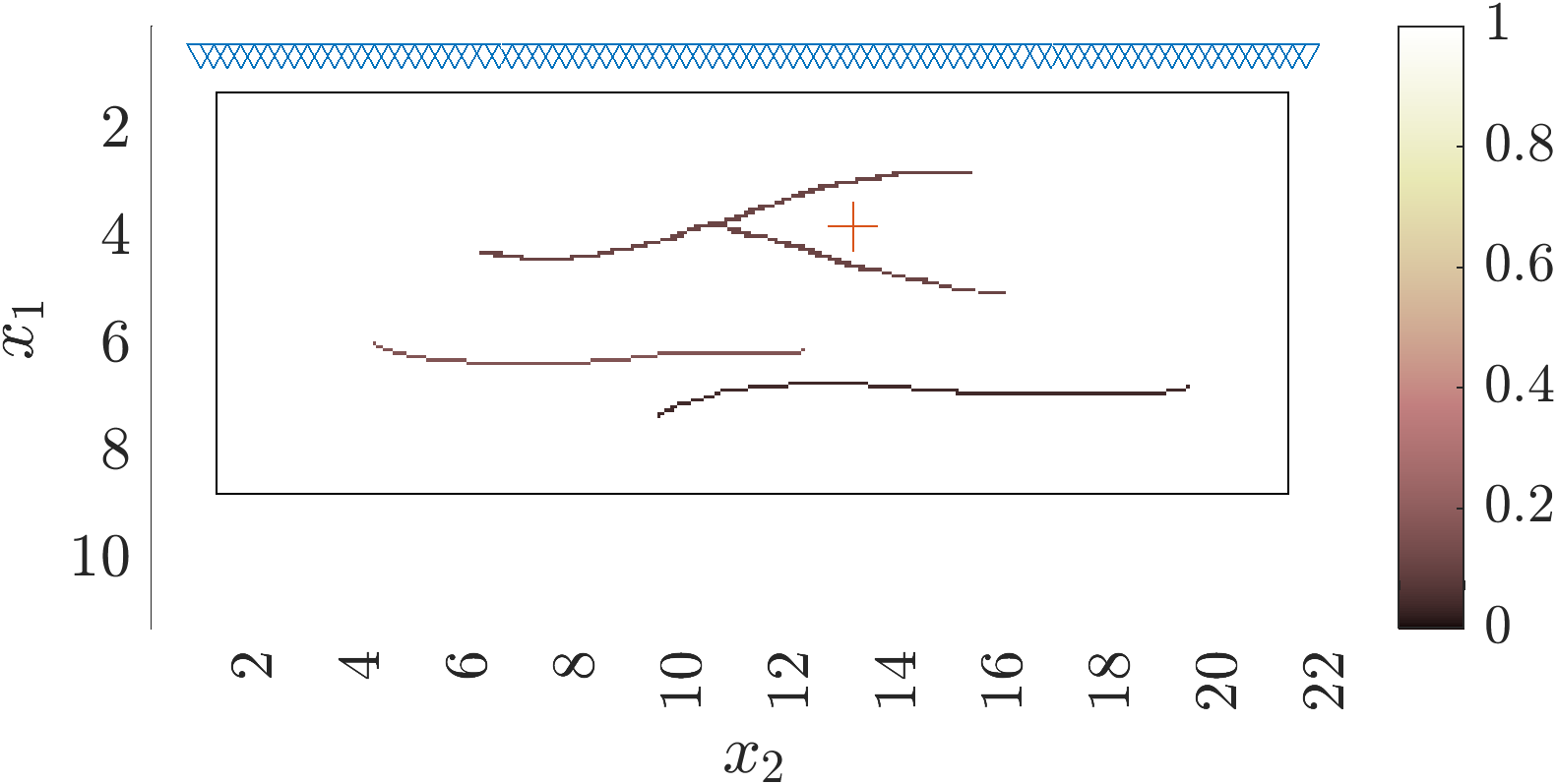} &\includegraphics[width = 0.3\textwidth]{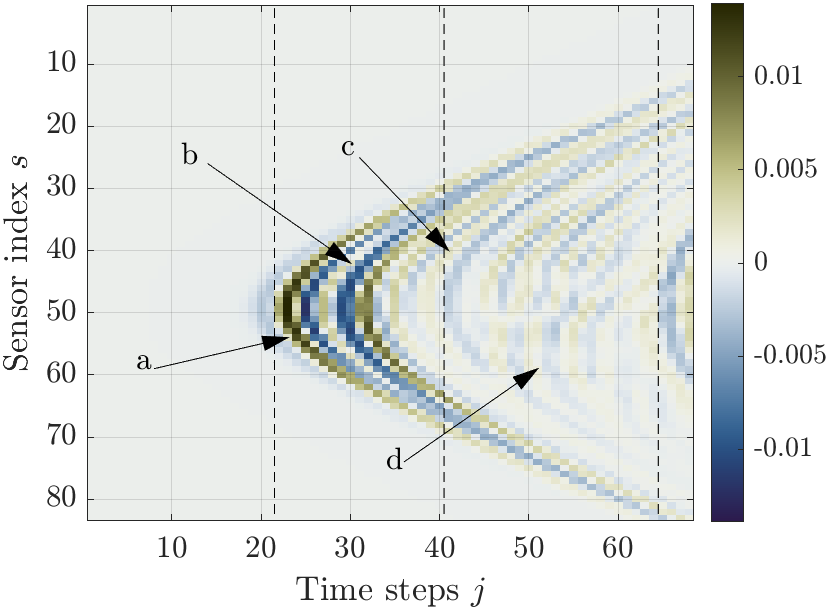} 
\\
Born Approximation & Estimated Wave\\
\includegraphics[width = 0.3\textwidth]{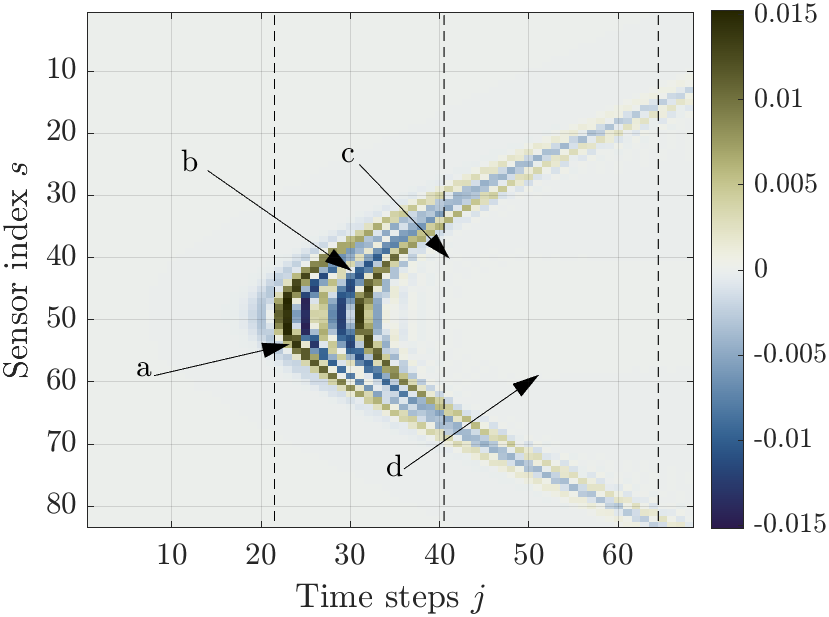}&

\includegraphics[width = 0.3\textwidth]{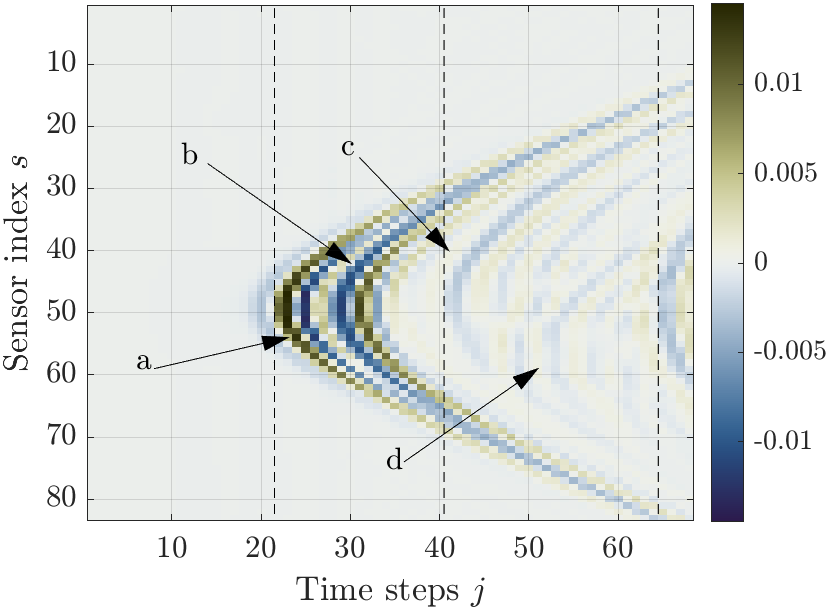} 
\end{tabular}

\caption{Top, from left to right: Plot of $c(\bx)/c_o$ and the wave $\be_2^T \bu_j(\bx)$ at the point $\bx$ indicated with a red cross. 
Bottom, from left to right: The waves $\be_2^T\bu_j(\bx,c_o \tensor{\bI})$ and $\be_2^T\bu_j^{\rm est}(\bx,c_o \tensor{\bI})$. The source is polarized along $\be_2$.}
\label{fig:Internal_Wave}
\end{figure}

In Fig. \ref{fig:Internal_Wave} we compare the components along $\be_2$ of the true wave $\bu_j$, the estimated wave $\bu_j^{\rm est}(\cdot; \tilde \bc)$ and the 
wave $\bu_j(\cdot;\tilde{\bc})$ calculated at the reference wave speed $\tilde \bc = c_o \tensor{\bI}$, at the point $\bx$ indicated with a red cross. The true medium is isotropic, with wave speed $c \tensor{\bI}$ displayed in the top left plot. The excitation is along $\be_2$. We indicate with arrows a  few arrival events: The event denoted by  $a$ is the direct arrival from the source. The event denoted by $b$ is  the wave that traveled from the source to the boundary, scattered there and then arrived at $\bx$. These two events are seen in all the plots. The events $c$ and $d$ are arrivals of waves that scattered in the medium. These are absent in the reference snapshots $\bu_j(\cdot;\tilde{\bc})$ but are seen in both the true wave 
snapshots and the estimated ones. Since in this example the kinematics is perturbed slightly, the estimated snapshots are a good 
approximation of the true ones.

\subsection{Estimated internal wave in terms of the dyadic Green's function}
\label{sect:INT_WAVE2}

The causal dyadic Green's function for the wave equation~\eqref{eq:We1} is the $2\times 2$ matrix valued solution $\bG$ of 
\begin{align}
\hspace{-0.13in} -\nabla^\perp \big[ \nabla^\perp \cdot \bG(t,\bx,\by) \big]+ \bc^{-2}(\bx) \partial_t^2 \bG(t,\bx,\by) &= \delta(t) \underline{\underline{\bI}} \delta(\bx-\by), \quad t \in \RR,~~ \bx \in \Omega,
\label{eq:GF3} \\
\bG(t,\bx,\by)  &\equiv \tensor{\bf 0}, \quad t < 0, ~~ \bx \in \Omega,
\label{eq:GF4}\\
\bn^\perp(\bx) \cdot \bG(t,\bx,\by)  &= {\bf 0}, \quad t \in \RR, ~~ \bx \in \partial \Omega, \label{eq:GF4.1}
\end{align}
where the operators are understood to act columnwise. In light of our transformation of the wave operator in~\eqref{eq:GF3} to  $A + \partial_t^2$, and the fact that in the right hand side of~\eqref{eq:u1} we have a time derivative, 
we also use $\bcG$,  satisfying 
\begin{align}
A\bcG(t,\bx,\by) + \partial_t^2\bcG(t,\bx,\by) &= \delta'(t) \underline{\underline{\bI}} \delta(\bx-\by), \quad t \in \RR, ~~ \bx \in \Omega,
\label{eq:GF1} \\
\bcG(t,\bx,\by)  &\equiv \tensor{\bf 0}, \quad t < 0, ~~ \bx \in \Omega,
\label{eq:GF2} \\
\bn^\perp \cdot \bcG(t,\bx,\by)  &= {\bf 0}, \quad t \in \RR, ~~ \bx \in \Omega.\label{eq:GF2.1}
\end{align}
The next lemma,  proved in Appendix \ref{ap:A}, gives the connection between these dyadics:

\vspace{0.05in}
\begin{lemma}
\label{lem.dyads}
The Green dyadics $\bcG$ and $\bG$ are related by the identity
\begin{equation}
\partial_t  \bG(t,\bx,\by) = \bc(\bx) \bcG(t,\bx,\by)\bc(\by), \quad t \in \RR, ~~ \bx, \by \in \Omega.
\label{eq:GF5}
\end{equation}
\end{lemma}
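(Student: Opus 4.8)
The plan is to verify that both $\partial_t \bG$ and the candidate matrix $\bc(\bx)\bcG(t,\bx,\by)\bc(\by)$ solve one and the same causal initial boundary value problem, and then to conclude by uniqueness. The time derivative and the two flanking factors of $\bc$ in the claimed identity are exactly what is needed to reconcile the $\delta(t)$ source of $\bG$ with the $\delta'(t)$ source of $\bcG$ and to account for the symmetrizing change of variables $\bU = c_o\bc^{-1}\bE$ used to pass from \eqref{eq:GF3} to $A+\partial_t^2$.

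First I would differentiate the defining equation \eqref{eq:GF3} in $t$. Since $\bc$ does not depend on $t$, this shows that $\partial_t\bG$ satisfies
\begin{equation*}
-\nabla^\perp\big[\nabla^\perp \cdot (\partial_t\bG)\big] + \bc^{-2}(\bx)\,\partial_t^2(\partial_t\bG) = \delta'(t)\,\bI\,\delta(\bx-\by),
\end{equation*}
together with causality $\partial_t\bG\equiv\tensor{\bf 0}$ for $t<0$ (from \eqref{eq:GF4}) and the boundary condition $\bn^\perp\cdot(\partial_t\bG)={\bf 0}$ on $\partial\Omega$ (from \eqref{eq:GF4.1}). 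Next I would transform \eqref{eq:GF1} into the same form: substituting $A=-\bc(\bx)\nabla^\perp[\nabla^\perp\cdot(\bc(\bx)\,\cdot)]$, left-multiplying by $\bc^{-1}(\bx)$, setting $\bK:=\bc(\bx)\bcG$, and using that $\bc$ commutes with $\partial_t^2$ together with the distributional identity $\bc^{-1}(\bx)\delta(\bx-\by)=\bc^{-1}(\by)\delta(\bx-\by)$, gives
\begin{equation*}
-\nabla^\perp\big[\nabla^\perp\cdot \bK\big] + \bc^{-2}(\bx)\,\partial_t^2 \bK = \delta'(t)\,\bc^{-1}(\by)\,\delta(\bx-\by).
\end{equation*}
Right-multiplying by the matrix $\bc(\by)$, which is constant in $\bx$ and $t$, and writing $\bW:=\bc(\bx)\bcG\bc(\by)=\bK\bc(\by)$ then yields
\begin{equation*}
-\nabla^\perp\big[\nabla^\perp\cdot \bW\big] + \bc^{-2}(\bx)\,\partial_t^2 \bW = \delta'(t)\,\bI\,\delta(\bx-\by),
\end{equation*}
i.e. exactly the interior equation of $\partial_t\bG$, with causality $\bW\equiv\tensor{\bf 0}$ for $t<0$ inherited from \eqref{eq:GF2}.

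The step requiring care is the boundary condition for $\bW$. Because the operators act columnwise, $\bn^\perp\cdot\bW=\bn^\perp\cdot(\bc(\bx)\bcG)\,\bc(\by)$, and $\bn^\perp\cdot(\bc(\bx)\bcG)$ is not literally $\bn^\perp\cdot\bcG$, since $\bc(\bx)$ mixes the two vector components. Here I would invoke the standing assumption from section~\ref{sect:Transf} that the medium is isotropic and homogeneous in a neighborhood of $\partial\Omega$, so that $\bc(\bx)=c_o\tensor{\bI}$ there. Consequently $\bn^\perp\cdot(\bc(\bx)\bcG)=c_o\,\bn^\perp\cdot\bcG={\bf 0}$ on $\partial\Omega$ by \eqref{eq:GF2.1}, whence $\bn^\perp\cdot\bW={\bf 0}$; the same homogeneity also reconciles the two boundary conditions \eqref{eq:GF4.1} and \eqref{eq:GF2.1}.

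Finally I would invoke uniqueness of the causal solution of this linear wave equation to conclude $\partial_t\bG=\bW=\bc(\bx)\bcG\bc(\by)$. I expect the uniqueness step to be the main obstacle, because the spatial operator $-\nabla^\perp[\nabla^\perp\cdot]$ has the nontrivial null space described in section~\ref{sect:waveDec}. I would handle it by splitting the field into its range and null-space components, as in the Helmholtz decomposition \eqref{eq:u5}, and arguing uniqueness separately on each: the range part through a standard energy estimate for the positive operator $\cA$, and the null-space part directly from the resulting second-order ODE in $t$ with homogeneous causal data. The distributional bookkeeping of the $\delta'(t)$ source and of $\bc^{-1}(\bx)\delta(\bx-\by)=\bc^{-1}(\by)\delta(\bx-\by)$ is routine but must be tracked carefully.
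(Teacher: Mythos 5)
Your proof is correct and follows essentially the same route as the paper's Appendix~A: both arguments show that $\partial_t \bG$ and $\bc(\bx)\bcG(t,\bx,\by)\bc(\by)$ (the paper works with $\tensor{\bH}=\bc^{-1}(\bx)\partial_t\bG$, you conjugate in the opposite direction, a cosmetic difference) solve the same causal initial boundary value problem with source $\delta'(t)$, using the identity $\bc^{-1}(\bx)\delta(\bx-\by)=\bc^{-1}(\by)\delta(\bx-\by)$ and the standing assumption $\bc=c_o\tensor{\bI}$ near $\partial\Omega$ to match the boundary conditions, and then conclude by uniqueness. Your extra care on the uniqueness step (Helmholtz splitting into range and null-space components) is a sound elaboration of what the paper dispatches in one sentence.
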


\vspace{0.05in} 
The approximation of  the forcing $\underline{\underline{\bI}} \delta(\bx-\by)$ in  our space $\mathscr{S}$, with orthonormal basis stored in $\bV$, is 
\begin{equation}
\bde(\bx,\by) = \sum_{j=0}^{n-1} \bv_j(\bx) \bv_j^T( \by) = \bV(\bx) \bV^T (\by).
\label{eq:deltaRange}
\end{equation}
The next lemma, proved in Appendix \ref{ap:B}, shows that the columns of the wave snapshots are wave fields with source 
determined by~\eqref{eq:deltaRange}. The components of the estimated internal wave~\eqref{eq:EstWave}
have a similar expression, except that the source is determined by
\begin{equation}
\bde^{\rm est}(\bx,\by;\tilde \bc) = \sum_{j=1}^\infty \bv_j(\bx) \bv_j^T( \by;\tilde \bc) = \bV(\bx) \bV^T (\by;\tilde \bc).
\label{eq:deltaRangeEst}
\end{equation}

\begin{lemma}
\label{lem.2}
Introduce the $2\times 2$ matrix valued fields 
\begin{equation}
\tensor{\bg}(t,\bx,\by) = \cos (t \sqrt{\cA}) \left| \hat f (\sqrt{\cA}) \right| \bde(\bx,\by),
\label{eq:trueg1}
\end{equation}
and 
\begin{equation}
\tensor{\bg}^{\rm est}(t,\bx,\by;\tilde \bc) = \cos (t \sqrt{\cA}) \left| \hat f (\sqrt{\cA}) \right| \bde^{\rm est}(\bx,\by;\tilde \bc).
\label{eq:estg1}
\end{equation}
Let $j = 0, \ldots, n-1$. 
The wave snapshots  can be written as 
\begin{equation}
\bu_j(\by) \approx \left(\tensor{\bg}(t_j,\bx_1,\by), \ldots, \tensor{\bg}(t_j,\bx_m,\by) \right), \quad  \by \in \Omega,
\label{eq:trueg}
\end{equation}
and the estimated snaphots satisfy
\begin{equation}
 \bu_j^{\rm est}(\by;\tilde \bc)  \approx \left(\tensor{\bg}^{\rm est}(t_j,\bx_1,\by;\tilde \bc), \ldots, \tensor{\bg}^{\rm est}(t_j,\bx_m,\by;\tilde \bc) \right), \quad  \by \in \Omega.
\label{eq:estg}
\end{equation}
\end{lemma}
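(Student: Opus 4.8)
The plan is to prove the identity columnwise. Since $\bu_j(\by)$ is the $2\times 2m$ field whose $(s,p)$ column is the snapshot $\bu_j^{(s,p)}(\by)$, and the right-hand side of \eqref{eq:trueg} stacks the $2\times 2$ blocks $\tensor{\bg}(t_j,\bx_s,\by)$, it suffices to show that $\bu_j^{(s,p)}(\by) \approx \tensor{\bg}(t_j,\bx_s,\by)\be_p$, i.e. that the $(s,p)$ snapshot column equals the $p$th column of the $s$th block. First I would rewrite the snapshot in the spectral form dictated by \eqref{eq:Sn3} and \eqref{eq:DM3}. The footnote to \eqref{eq:DM3} identifies the bracketed coefficient with $\int_\Omega \bbF^{(s,p)}\cdot\bphi_j$, where $\bbF^{(s,p)}$ is the projection of $F^{(s)}\be_p$ onto $\mbox{range}(A)$ used in \eqref{eq:u7}; by functional calculus this gives $\bu_0^{(s,p)} = |\hat f(\sqrt{\cA})|\,\bbF^{(s,p)}$, so that $\bu_j^{(s,p)}(\by) = \cos(t_j\sqrt{\cA})\,|\hat f(\sqrt{\cA})|\,\bbF^{(s,p)}(\by)$.

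Next I would replace the antenna by a point source. Because $F^{(s)}$ integrates to one and is supported in a set of diameter small compared with $\la_o$, we have $F^{(s)}(\by)\be_p \approx \delta(\by-\bx_s)\be_p$, hence $\bbF^{(s,p)}(\by) \approx P_{\mbox{range}(A)}[\tensor{\bI}\delta(\cdot-\bx_s)\be_p](\by)$. The central step is then to recognize the reproducing kernel: by \eqref{eq:deltaRange}, $\bde(\bx,\by) = \bV(\bx)\bV^T(\by)$ is the kernel of the orthogonal projection onto $\mathscr{S}$, so $P_{\mathscr{S}}[\tensor{\bI}\delta(\cdot-\bx_s)\be_p](\by) = \bde(\by,\bx_s)\be_p$. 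Since $\mathscr{S}\subset\mbox{range}(A)$ carries the spectral content on which $|\hat f(\sqrt{\cA})|$ is non-negligible, I would argue that the weight $|\hat f(\sqrt{\cA})|$ absorbs the difference between the two projections, yielding $\bu_j^{(s,p)}(\by) \approx \cos(t_j\sqrt{\cA})\,|\hat f(\sqrt{\cA})|\,[\bde(\cdot,\bx_s)\be_p](\by)$. Using the self-adjointness of $\cos(t_j\sqrt{\cA})$ and $|\hat f(\sqrt{\cA})|$, together with the symmetry $\bde(\bx,\by)^T=\bde(\by,\bx)$ to reconcile the argument ordering in \eqref{eq:trueg1}, this is exactly $\tensor{\bg}(t_j,\bx_s,\by)\be_p$, which proves \eqref{eq:trueg} after reassembling the $2m$ columns.

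For the estimated snapshots \eqref{eq:estg} I would start from \eqref{eq:EstWave}, $\bu_j^{\rm est}(\by;\tilde\bc) = \bV(\by;\tilde\bc)\buR_j$, and substitute the true-medium identity $\buR_j = \int_\Omega d\bx\,\bV^T(\bx)\bu_j(\bx)$ from \eqref{eq:ROM5}. Inserting the representation of $\bu_j$ just derived and moving the self-adjoint operator $\cos(t_j\sqrt{\cA})|\hat f(\sqrt{\cA})|$ onto the true basis factor $\bV(\bx)$ reproduces, column by column, the field $\tensor{\bg}^{\rm est}$ of \eqref{eq:estg1} built from $\bde^{\rm est}(\bx,\by;\tilde\bc)=\bV(\bx)\bV^T(\by;\tilde\bc)$ in \eqref{eq:deltaRangeEst}. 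The only difference from the true case is that the left factor is now the reference basis $\bV(\cdot;\tilde\bc)$, while the propagation and band-limiting operators remain those of the true operator $\cA$, consistent with the fact that $\buR_j$, and hence the data, originate in the true medium.

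The main obstacle is the justification of the two approximations that produce the symbol $\approx$: replacing $F^{(s)}$ by a Dirac mass, and replacing the projection onto $\mbox{range}(A)$ by the projection onto the finite-dimensional causal space $\mathscr{S}$ encoded by $\bde$. Both rely on the band-limited character of the excitation through the weight $|\hat f(\sqrt{\cA})|$, which suppresses the high-frequency eigencomponents that are neither resolved by the small antenna support nor captured by the $n$ snapshots, and on the Nyquist/causality choice of $\tau$ and $n$ that guarantees $\mathscr{S}$ resolves the wave field out to the imaging depth. Quantifying these errors requires spectral estimates on how sharply $|\hat f(\sqrt{\theta_j})|$ decays past the eigenvalues represented in $\mathscr{S}$. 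By contrast, the transpose and reciprocity bookkeeping needed to align the arguments of $\tensor{\bg}(t_j,\bx_s,\by)$ is routine, following from the self-adjointness of the functional-calculus operators and the symmetry of the projection kernel $\bde$.
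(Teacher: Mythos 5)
Your second half (the estimated snapshots \eqref{eq:estg}) is essentially correct and is in substance the paper's own argument in a streamlined form: starting from $\bu_j^{\rm est}(\by;\tilde\bc)=\bV(\by;\tilde\bc)\buR_j$ and the exact identity $\buR_j=\int_\Omega d\bx\,\bV^T(\bx)\bu_j(\bx)$ of \eqref{eq:ROM5}, then moving the self-adjoint operator $\cos(t_j\sqrt{\cA})\,|\hat f(\sqrt{\cA})|$ onto the true basis factor and invoking the point-like antenna support together with smoothness of band-limited fields. The paper reaches the same exact identity (its \eqref{eq:PFeq}) by a longer route, via the auxiliary fields $\bsig_l=\bi_l^T\bR^{-1}\bV^T(\cdot;\tilde\bc)$, the cosine product-to-sum identity, the mass-matrix blocks \eqref{eq:calcGram} and the Cholesky factorization $\mathbb{M}=\bR^T\bR$; your shortcut through \eqref{eq:ROM5} is legitimate because \eqref{eq:ROM5} already encodes that algebra. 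The transpose/argument-ordering looseness you flag is present in the paper as well, and dismissing it via the symmetry of the kernel is consistent with how the paper treats it.

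The genuine gap is in your proof of \eqref{eq:trueg}. There you replace the projection onto $\mbox{range}(A)$ by the projection onto $\mathscr{S}$ \emph{under} the weight $|\hat f(\sqrt{\cA})|$, arguing that band-limiting ``absorbs the difference between the two projections.'' This step is both unjustified and unnecessary. It is unjustified because $\mathscr{S}$ is a $2nm$-dimensional span of snapshots generated from $m$ antenna positions, not a spectral subspace of $\cA$; consequently $P_{\mathscr{S}}$ does not commute with $|\hat f(\sqrt{\cA})|$ or with $\cos(t_j\sqrt{\cA})$, and the error you would need to control is a commutator $[\,|\hat f(\sqrt{\cA})|\cos(t_j\sqrt{\cA}),P_{\mathscr{S}}]$ applied to $\delta_{\bx_s}\be_p$ --- not a tail estimate on how fast $|\hat f(\sqrt{\theta_j})|$ decays, which is what you propose to quantify. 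It is unnecessary because the paper's mechanism here is \emph{exact}: for $j=0,\ldots,n-1$ the snapshots satisfy $\bu_j\in\mathscr{S}$, i.e.\ $\bu_j(\bx)=\bV(\bx)\buR_j$ (the second identity in \eqref{eq:ROM5}), so the only ``$\approx$'' in the lemma comes from the point-antenna evaluation, exactly as in your part two. Indeed, the paper disposes of \eqref{eq:trueg} in one line: run the same argument as for \eqref{eq:estg} with $\bV(\cdot;\tilde\bc)$ replaced by the true $\bV$, so that $\bde^{\rm est}$ becomes $\bde$. You already possess all the tools: apply your own second-half derivation with the true basis and delete the projection-swap heuristic, which is the one step of your proposal that would not survive scrutiny.
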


\vspace{0.05in} The expression~\eqref{eq:trueg} of the wave snapshots says that its $(s,p)$ columns are wave vector fields  that 
originate from the vicinity of the location $\bx_s$ of the $s^{\rm th}$ antenna, with polarization along $\be_p$, for $s = 1, \ldots, m$ and $p = 1,2$. Vicinity means  within the support\footnote{By support we mean the set of points $\by$ around $\bx_s$, where $\bde(\bx_s,\by)$ is significant.}
of  $\bde$ defined in~\eqref{eq:deltaRange}. The estimated snapshots~\eqref{eq:estg} have a similar interpretation, except that their initial support is dictated by $\bde^{\rm est}$ defined in ~\eqref{eq:deltaRangeEst}.  The support of $\bde$ depends on 
how rich the approximation space $\mathscr{S}$ is, while the support of 
 $\bde^{\rm est}$ also depends on the approximation of $\bV$ by $\bV(\cdot; \tilde \bc)$.
This approximation seems to be mostly dependent on the kinematics i.e., the smooth part of  $\bc$.

We can now state the connection between the estimated internal wave and the Green dyadic. The proof is in appendix \ref{ap:C}
and the result is used in the analysis of the imaging function introduced in the next section.

\vspace{0.05in}
\begin{proposition}
\label{prop.1}
Let $\check{f}$ be the even time signal with Fourier transform $|\hat f|$,
\begin{equation}
\check{f}(t) = \int_{-\infty}^\infty \frac{d \om}{2 \pi} |\hat f(\om)| \cos(\om t).
\label{eq:checkf}
\end{equation}
Suppose that $\bde^{\rm est}(\bx,\by;\tilde \bc)$ defined in~\eqref{eq:deltaRangeEst} peaks near $\by$. Then, the 
components of the estimated snapshots in equation~\eqref{eq:estg}, for points $\by \in \Omega$ satisfying $|\by -\bx_s| > c_ot_F$, are approximated by 
\begin{equation}
\tensor{\bg}^{\rm est}(t,\bx_s,\by;\tilde \bc) \approx c_o^{-1} \check{f}'(t) \star_t 
\int_{\Omega} d \bxi \, {\bG}(t,\bx_s,\bxi) \bc^{-1}(\bxi) {\bde}^{\rm est}(\bxi,\by;\tilde \bc).
\label{eq:gvsdyad}
\end{equation}
\end{proposition}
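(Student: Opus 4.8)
The plan is to start from the right-hand side of \eqref{eq:gvsdyad} and transport it, step by step, onto the spectral definition \eqref{eq:estg1} of $\tensor{\bg}^{\rm est}$, using Lemma \ref{lem.dyads} to trade the physical dyadic $\bG$ for the functional-calculus dyadic $\bcG$. First I would move the time derivative from $\check f'$ onto $\bG$ inside the convolution, $\check f'(t)\star_t\bG=\check f(t)\star_t\partial_t\bG$, and apply the identity $\partial_t\bG(t,\bx_s,\bxi)=\bc(\bx_s)\bcG(t,\bx_s,\bxi)\bc(\bxi)$ of \eqref{eq:GF5}. The factor $\bc(\bxi)$ then cancels the $\bc^{-1}(\bxi)$ already present in \eqref{eq:gvsdyad}, while $\bc(\bx_s)=c_o\tensor{\bI}$, because the medium is known, isotropic and homogeneous with speed $c_o$ in the vicinity of the antenna at $\bx_s$; this absorbs the prefactor $c_o^{-1}$. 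After these reductions the right-hand side becomes
\[
\int_\Omega d\bxi\,\big[\check f(t)\star_t\bcG(t,\bx_s,\bxi)\big]\,\bde^{\rm est}(\bxi,\by;\tilde\bc).
\]

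Next I would recognize the $\bxi$-integral of $\bcG$ against $\bde^{\rm est}(\cdot,\by;\tilde\bc)$ as a causal wave. Interchanging the time convolution with the $\bxi$-integration and setting $\bw(t,\bx_s)=\int_\Omega d\bxi\,\bcG(t,\bx_s,\bxi)\bde^{\rm est}(\bxi,\by;\tilde\bc)$, the defining problem \eqref{eq:GF1}--\eqref{eq:GF2.1} for $\bcG$ shows that $\bw$ solves $(\cA+\partial_t^2)\bw=\delta'(t)\,\bde^{\rm est}(\bx_s,\by;\tilde\bc)$ with homogeneous data for $t<0$, the operator acting on the argument $\bx_s$. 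Since the columns of $\bde^{\rm est}$ lie in $\mbox{range}(A)$, where functional calculus on $\cA$ applies, the propagating part of the solution is $\bw(t,\bx_s)=1_{[0,\infty)}(t)\cos(t\sqrt{\cA})\,\bde^{\rm est}(\bx_s,\by;\tilde\bc)$, consistent with the convention in \eqref{eq:estg1} that $\cos(t\sqrt\cA)$ acts on the first spatial argument. The null-space part of the forcing produces only the short-lived, non-propagating response discussed in Section \ref{sect:waveDec}, which remains localized near $\bx_s$.

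It then remains to carry out the convolution with $\check f$. Expanding $\bde^{\rm est}(\cdot,\by;\tilde\bc)$ in the eigenbasis of $\cA$ and using the Fourier transform formula for $1_{[0,\infty)}(t)\cos(t\sqrt{\theta_j})$ employed in deriving \eqref{eq:waveEv}, together with the fact that $\check f$ has Fourier transform $|\hat f|$ by \eqref{eq:checkf}, each mode contributes $\check f(t)\star_t\big[1_{[0,\infty)}(t)\cos(t\sqrt{\theta_j})\big]=|\hat f(\sqrt{\theta_j})|\cos(t\sqrt{\theta_j})+r_j(t)$, where $r_j$ is the inverse transform of $i\om|\hat f(\om)|/(\theta_j-\om^2)$. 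The propagating part sums, by functional calculus, to $\cos(t\sqrt{\cA})|\hat f(\sqrt{\cA})|\bde^{\rm est}(\bx_s,\by;\tilde\bc)$, which is exactly $\tensor{\bg}^{\rm est}(t,\bx_s,\by;\tilde\bc)$ by \eqref{eq:estg1}. This yields \eqref{eq:gvsdyad} once the remainders are shown to be negligible.

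The main obstacle is precisely the control of the discarded terms: the null-space contribution to $\bw$ and the transient remainders $r_j$. Both are non-propagating and concentrated near $\bx_s$ at times comparable to the pulse half-width $T_f$, whereas the hypotheses that $\bde^{\rm est}(\bxi,\by;\tilde\bc)$ peaks near $\by$ and that $|\by-\bx_s|>c_o t_F$ force the relevant arrival to occur at time $t\gtrsim|\by-\bx_s|/c_o>T_f$; by finite propagation speed the acausal tail of $r_j$ then lies below the envelope of $\check f$ and the near-source terms do not reach $\by$. I expect this to require the same estimate that underlies the even-in-time construction \eqref{eq:waveEv}; indeed, replacing the single convolution by the symmetric time combination used there removes $r_j$ identically, so the approximation in \eqref{eq:gvsdyad} reflects only the asymmetry that is negligible away from the source.
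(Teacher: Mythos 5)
Your argument is, in substance, the paper's own proof of Proposition \ref{prop.1} (Appendix \ref{ap:C}) run in reverse: you use the same ingredients --- Lemma \ref{lem.dyads} combined with $\bc(\bx_s)=c_o\tensor{\bI}$ to trade $c_o^{-1}\check f'\star_t\bG\,\bc^{-1}$ for $\check f\star_t\bcG$, the superposition representation through $\bcG$, the splitting of $\bcG$ into its $\mbox{range}(\cA)$ part $\bbG$ and its null-space part $\bGa$ (the latter dropping by orthogonality), the spectral expansion in the $\bphi_q$, and the causality argument resting on $\bde^{\rm est}$ peaking near $\by$ with $|\by-\bx_s|>c_o t_F$. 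So the two proofs are equivalent up to orientation, and your overall plan is sound.

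One correction to your mode-by-mode bookkeeping, since the identity you state is off by a factor of two and this propagates into how you describe the remainder. The Fourier transform of $1_{[0,\infty)}(t)\cos(t\sqrt{\theta_j}\,)$ carries the delta part with weight $\pi/2$, not $\pi$, so the exact splitting is
\begin{equation*}
\check f(t)\star_t\big[1_{[0,\infty)}(t)\cos(t\sqrt{\theta_j}\,)\big]
=\tfrac12\,|\hat f(\sqrt{\theta_j}\,)|\cos(t\sqrt{\theta_j}\,)+\tilde r_j(t),
\end{equation*}
where $\tilde r_j$ is the inverse transform of $i\om|\hat f(\om)|/(\theta_j-\om^2)$ --- not, as you wrote, the full cosine term plus that same remainder. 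Moreover $\tilde r_j$ is \emph{not} a transient concentrated near the source: its transform is odd and purely imaginary, so $\tilde r_j$ is an odd function of $t$ that behaves like $\tfrac12\,\mbox{sgn}(t)\,|\hat f(\sqrt{\theta_j}\,)|\cos(t\sqrt{\theta_j}\,)$ away from $t=0$, up to decaying corrections. It cannot be discarded as a short-lived term; rather, for $t\gg T_f$ its persistent part combines with the half-cosine to reconstitute the full $|\hat f(\sqrt{\theta_j}\,)|\cos(t\sqrt{\theta_j}\,)$, leaving only a genuinely transient discrepancy near $t=0$, which is negligible at the times of interest by your (correct) causality argument. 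Note that with the corrected splitting your closing remark becomes exact: oddness of $\tilde r_j$ makes the symmetric time combination cancel it identically, whereas with your stated splitting it would not cancel. The paper sidesteps this bookkeeping by using the exact full-line identity $\check f(t)\star_t\cos(t\sqrt{\theta_q}\,)=|\hat f(\sqrt{\theta_q}\,)|\cos(t\sqrt{\theta_q}\,)$ (equation \eqref{eq:C9}) and confining the approximation to the single step of replacing $\bbG$ by the full-line cosine, justified because $\tensor{\bg}^{\rm est}(t,\bx_s,\by;\tilde\bc)\approx 0$ for $t=O(t_F)$. A last small point: there is no ``null-space part of the forcing'' to worry about --- the columns of $\bde^{\rm est}(\cdot,\by;\tilde\bc)$ lie entirely in $\mbox{range}(A)$, since the $\bv_j$ span the snapshot space; the null-space issue resides in $\bcG$ itself, through $\bGa$ in \eqref{eq:GFDec}, and it is eliminated by orthogonality rather than by a short-lived-response argument.
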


\section{Imaging}
\label{sect:IMAGE}
The goal of imaging is to obtain a qualitative picture of the heterogeneous medium, which  localizes 
reflective structures, modeled by the ``rough features" of $\beps$ and thus $\bc$, like jump discontinuities~\cite{symes2008migration}. The smooth part of the medium, which determines the kinematics of wave propagation, is assumed known. In this section we model this smooth part by the constant and isotropic reference wave speed $
\tilde \bc = c_o \tensor{\bI}$.
 This is typical in applications like radar imaging or nondestructive evaluation
\cite{cheney2009fundamentals,curlander1991synthetic}, although  in seismic imaging the smooth part is variable
\cite{biondi20063d,symes2008migration}. 

The most popular imaging method is reverse time migration~\cite{biondi20063d}, also known as backprojection~\cite{cheney2009fundamentals}. We call it henceforth the ``traditional method" and its imaging function is defined by 
\begin{equation}
\hspace{-0.05in}\cI^\RTM(\by) = \hspace{-0.05in} \sum_{s,s'=1}^m \sum_{p,p'=1}^2 \hspace{-0.05in}\be_{p'}^T 
\bG(-t,\bx_{s'},\by;\tilde \bc) \star_t \bG(-t,\by,\bx_{s};\tilde \bc)  \be_p \star_t \mathcal{W}^{(s',p'),(s,p)}(t) 
\Big|_{t=0}.
\label{eq:defIRTMpp}
\end{equation}
This definition  is based on the linearization of the forward mapping about the reference medium. The linearization, known as the Born approximation, 
assumes that the waves propagate from the source at $\bx_s$ to a point  $\by \in \Omega$, where they scatter once, and 
then return to the array, at the receiver location $\bx_{s'}$. The image is formed by back-propagating the recorded waves 
to search points $\by$ in the imaging domain $\Omega_{\rm im} \subset \Omega$.
The backpropagation uses 
the time reversibility of the wave equation and it is modeled in~\eqref{eq:defIRTMpp} by the convolution with the 
time reversed Green's dyadic in the reference medium.  Traditional imaging works well if the medium is a slight perturbation of the reference one. This holds
when the reflectivity is weak and/or the support of the unknown features is small. If this is true, then the imaging functions
\eqref{eq:defIRTMpp} peaks near the reflective structures. If the reference medium has the wrong kinematics, the 
images are unfocused and if the reflectivity is strong, the images display ghost features, which are multiple scattering artifacts.

We introduce next an imaging method that uses the estimated internal wave~\eqref{eq:EstWave}. As we have discussed 
in the previous section, this wave contains all the scattering events recorded at the array. However, these events are mapped 
to wrong locations in $\Omega$ if the kinematics is strongly perturbed. Thus, it is not surprising 
that the imaging is successful when the reference medium gives a good approximation of the kinematics. The interesting property of the new imaging function is that it mitigates multiple scattering artifacts. 
\subsection{Imaging with the estimated internal wave}
\label{sect:normg}
Similar to equation~\eqref{eq:Sn3}, let us write the estimated internal wave~\eqref{eq:EstWave} in terms of its components
\begin{equation}
\bu_j^{\rm est}(\by;\tilde \bc) = \left(\bu_j^{{\rm est} (1,1)}(\by;\tilde \bc), 
\bu_j^{{\rm est} (1,2)}(\by;\tilde \bc), \ldots, \bu_j^{{\rm est} (m,2)}(\by;\tilde \bc)\right).
\label{eq:decEstWave}
\end{equation}
The imaging function for incident polarization along the direction $\be_p$ and measurements along $\be_{p'}$ is 
\begin{equation}
\cI^{(p',p)}(\by) = \sum_{j=0}^{n-1} \sum_{s = 1}^m \left| \be_{p'}^T \bu_j^{{\rm est} (s,p)}(\by;\tilde \bc) \right|^2, \qquad 
\by \in \Omega_{\rm im} \subset \Omega.
\label{eq:Jgpp}
\end{equation}
This is an extension of the imaging function introduced in~\cite{borcea2021reduced} for  acoustic waves. As we show next, its expression is connected to the time reversal point spread function~\cite{fink99}, which describes mathematically the following experiment: Suppose that there is a point source at $\by \in \Omega$, which emits a wave. If we record this wave at the array, time reverse the recordings and re-emit the wave back into the medium, then the wave will focus near $\by$, due to the time reversibility of the wave equation. The sharpness of the focusing depends on the frequency content of the wave, the aperture of the array and the medium. The striking experiments in~\cite{fink99} show that the more scattering in the medium, the better the focusing. 

The analysis below shows that our imaging function~\eqref{eq:Jgpp} is determined by the time reversed point spread function averaged around $\by$, in the support of $\bde^{\rm est}$ defined in~\eqref{eq:deltaRangeEst}. Thus, it is designed to be sensitive to changes in the medium around $\by$.

\subsubsection{Connection to the time reversal experiment}
\label{sect:AnalNormg}
To analyze~\eqref{eq:Jgpp}, we recall the result~\eqref{eq:estg} in Lemma \ref{lem.2} and approximate the 
sum over $j$ by a time integral
\begin{equation}
\cI^{(p',p)}(\by) \approx \frac{1}{\tau} \int_0^{n \tau} d t \sum_{s=1}^m \left| \be_{p'}^T \bg^{\rm est}(t,\bx_s,\by) \be_p \right|^2.
\label{eq:TR1}
\end{equation}
Now use the result~\eqref{eq:gvsdyad} of Proposition \ref{prop.1} to write 
\begin{equation}
\cI^{(p',p)}(\by) \approx \int_{\Omega} d \bxi \int_{\Omega} d \bxi' \, \left[\bet^{(p)}(\bxi,\by;\tilde \bc)\right]^T 
\tensor{\bK}^{(p')}(\bxi,\bxi')  \bet^{(p)}(\bxi',\by;\tilde \bc),
\label{eq:TR2}
\end{equation}
in terms of the vector 
\begin{equation}
\bet^{(p)}(\bxi,\by;\tilde \bc) =  \bc^{-1}(\bxi) \bde^{\rm est}(\bxi,\by; \tilde \bc) \be_p,
\label{eq:TR3}
\end{equation}
and the matrix $\tensor{\bK}^{(p')} = \left(K^{(p')}_{j,j'}\right)_{j,j' = 1,2}, $ with entries 
\begin{align}
K^{(p')}_{j,j'}(\bxi,\bxi') &= \sum_{s = 1}^m \int_0^{n \tau}  \frac{dt}{c_o^2 \tau} \, 
\check{f}'(t) \star_t G_{p',j}(t,\bx_s,\bxi) \check{f}'(t) \star_t  G_{p',j'}(t,\bx_s,\bxi') \nonumber \\
&= \sum_{s = 1}^m \int_0^{n \tau} \frac{dt}{c_o^2 \tau} \,
\check{f}'(t) \star_t G_{j,p'}(t,\bxi,\bx_s) \check{f}'(t) \star_t  G_{p',j'}(t,\bx_s,\bxi').
\label{eq:TR4}
\end{align}
The second equality is due to the reciprocity of Green's dyadic~\cite{tai1992complementary,strunc2007constitutive}
\begin{equation}
\bG(t,\bx_s,\bxi) = \bG^T(t,\bxi,\bx_s).
\end{equation}

We relate next the expression~\eqref{eq:TR2} to the time reversal experiment, using the following two steps:

\vspace{0.05in}
\noindent \textbf{Step 1:} Let $\bpsi^{(p)}$ be the wave field satisfying the wave equation
\begin{align*}
-\nabla^\perp \left[ \nabla^\perp \cdot \bpsi^{(p)}(t,\bx,\by) \right] + \bc^{-2}(\bx) \partial_t^2 \bpsi^{(p)}(t,\bx,\by) &=\check{f}'(t) \bet^{(p)}(\bx,\by;\tilde \bc), ~ t \in \RR, ~ \bx \in \Omega, \\
\bpsi^{(p)}(t,\bx,\by) &= {\bf 0}, \quad t \ll 0, ~ \bx \in \Omega, \\
\bn^\perp(\bx) \cdot \bpsi^{(p)}(t,\bx,\by) &= 0 \quad t \in \RR, ~ \bx \in \partial \Omega.
\end{align*}
This wave can be written in terms of Green's dyadic $\bG$, the solution of~\eqref{eq:GF3}-\eqref{eq:GF4.1}, using linear superposition
\begin{equation*}
\bpsi^{(p)}(t,\bx,\by) = \check{f}'(t) \star_t  \int_{\Omega} d \bxi' \, \bG(t,\bx,\bxi') \bet^{(p)}(\bxi',\by;\tilde \bc).
\end{equation*}
We are interested in the $p'$ component of this wave, evaluated at $\bx = \bx_s$.  Substituting it in
equations ~\eqref{eq:TR2} and~\eqref{eq:TR4}, we get 
\begin{align*}
 \hspace{-0.08in} \cI^{(p',p)}(\by) \approx \int_{\Omega} d \bxi  \left[\bet^{(p)}(\bxi,\by;\tilde \bc)\right]^T \hspace{-0.05in} \int_0^{n \tau}  \hspace{-0.05in} \frac{d t}{c_o^2 \tau} 
\check{f}'(t) \star_t \sum_{s=1}^m \bG(t,\bxi,\bx_s) \be_{p'}  \left[\be_{p'}^T \bpsi^{(p)}(t,\bx_s,\by)\right].
\end{align*}

\vspace{0.05in}
\noindent \textbf{Step 2:} Now change variables in the time integral from $t$ to $-t$ and recall that $\check{f}$ is even, 
which means that $\check{f}'$ is odd. Write also explicitly the convolution to get
\begin{align}
\cI^{(p',p)}(\by) \approx -\frac{1}{c_o^2 \tau} \int_{\Omega} d \bxi \left[\bet^{(p)}(\bxi,\by;\tilde \bc)\right]^T  \bbeta^{(p',p)}(\bxi,\by),
\label{eq:TR6}
\end{align}
in terms of the vector field
\begin{equation}
\bbeta^{(p',p)}(\bxi,\by) =  \check{f'}(t') \star_t \int_{-n \tau}^0 dt \sum_{s=1}^m \bG(t'-t,\bxi,\bx_s) \be_{p'} \left[\be_{p'}^T \bpsi^{(p)}(-t,\bx_s,\by)\right] \Big|_{t'=0}.
\label{eq:TR7}
\end{equation}
The right hand side in~\eqref{eq:TR7} is obtained by emitting from all the antennas, over the time interval $t \in [0,n\tau]$, the time reversed wave $\be_{p'}^T  \bpsi^{(p)}$, polarized along $\be_{p'}$.  The result is then convolved with $\check f'$ and it is evaluated at 
$t = 0$, when we expect from the time reversibility of the wave equation the refocusing to occur, at points $\bxi$ near $\by$~\cite{bal2003time}.

\subsubsection{Expected focusing of the imaging function}
\label{sect:Expected}
Recall definition~\eqref{eq:TR3} of  $\bet^{(p)}$ and write the expression~\eqref{eq:TR6}  more explicitly
\begin{equation}
\cI^{(p',p)}(\by) \approx -\frac{1}{c_o^2 \tau} \int_{\Omega} d \bxi \, 
\be_p^T \left[  \bde^{\rm est}(\bxi,\by; \tilde \bc)\right]^T \bc^{-1}(\bxi) \bbeta^{(p',p)}(\bxi,\by).
\label{eq:TR8}
\end{equation}
We know that the time reversal experiment gives a field $\bbeta^{(p,p')}$ that is peaked at $\bxi \approx \by$, 
with resolution that depends on the signal $\check f$, the aperture of the array and the medium through which the waves propagate
\cite{bal2003time}.
The shorter the signal and the larger the aperture, the better the refocusing. Our imaging function averages 
$\bbeta^{(p,p')}$ in the support of $ \bde^{\rm est}(\cdot, \by;\tilde \bc)\be_p$ and it is sensitive to the changes of 
the wave speed $\bc$ there. Thus, as long as the components in the $p^{\rm th}$ column of  $\bde^{\rm est}(\cdot, \by;\tilde \bc)$
are peaked near $\by$, the imaging function computed from the internal wave as in  equation~\eqref{eq:Jgpp} gives good estimates 
of the support of the reflectivity of the medium.

\subsection{Numerical results}
We refer to Appendix \ref{ap:num} for the setup of the numerical simulations and some comments on the computational cost. The simulations are carried out in a rectangular domain 
with perfectly conducting boundary. The array of antennas  is near the top boundary and the system of coordinates is $\bx = (x_1,x_2)$, with the ``range coordinate" $x_1$ pointing downwards and the ``cross-range" coordinate $x_2$ orthogonal to it. 
The length unit in the plots is $\lambda_c$, the wavelength calculated at the highest frequency $\om_c$ in the bandwidth of the probing signal $f(t)$ (Appendix \ref{ap:num}).

We show imaging results for isotropic and anisotropic media with crack like features, where the kinematics is slightly perturbed, but the reflectivity is strong enough to cause visible multiple scattering effects in the traditional imaging function~\eqref{eq:defIRTMpp}.
To explore the role of polarization in the measurements, we display the imaging function~\eqref{eq:Jgpp} for  $p,p' = 1,2$, instead 
of showing the sum over all polarizations. Note that we plot the range derivatives of the imaging functions, in order to emphasize the
large changes which are related to the jump discontinuities of the wave speed. 
The array aperture, antenna separation and time step $\tau$ are given in the captions.

\subsubsection{Imaging in isotropic media}
The first example considers an isotropic medium, with dielectric permittivity displayed in the left plot of Fig. \ref{fig:Crack1}. 
The traditional imaging function ~\eqref{eq:defIRTMpp} is shown in the middle plot. It does show the 
crack, but it has ghost features below it,  induced by multiple scattering. The ideal image, the analogue of $\cI^{(2,2)}$, defined for the uncomputable true internal wave, is shown for reference in the right plot. It has no ghost features and the resolution is excellent,
as expected from the expression~\eqref{eq:TR8} with $\bde^{\rm est}$ replaced by $ \bde$. 

\begin{figure}[t]
\centering
\hspace{-0.13in}\includegraphics[width = 0.33\textwidth]{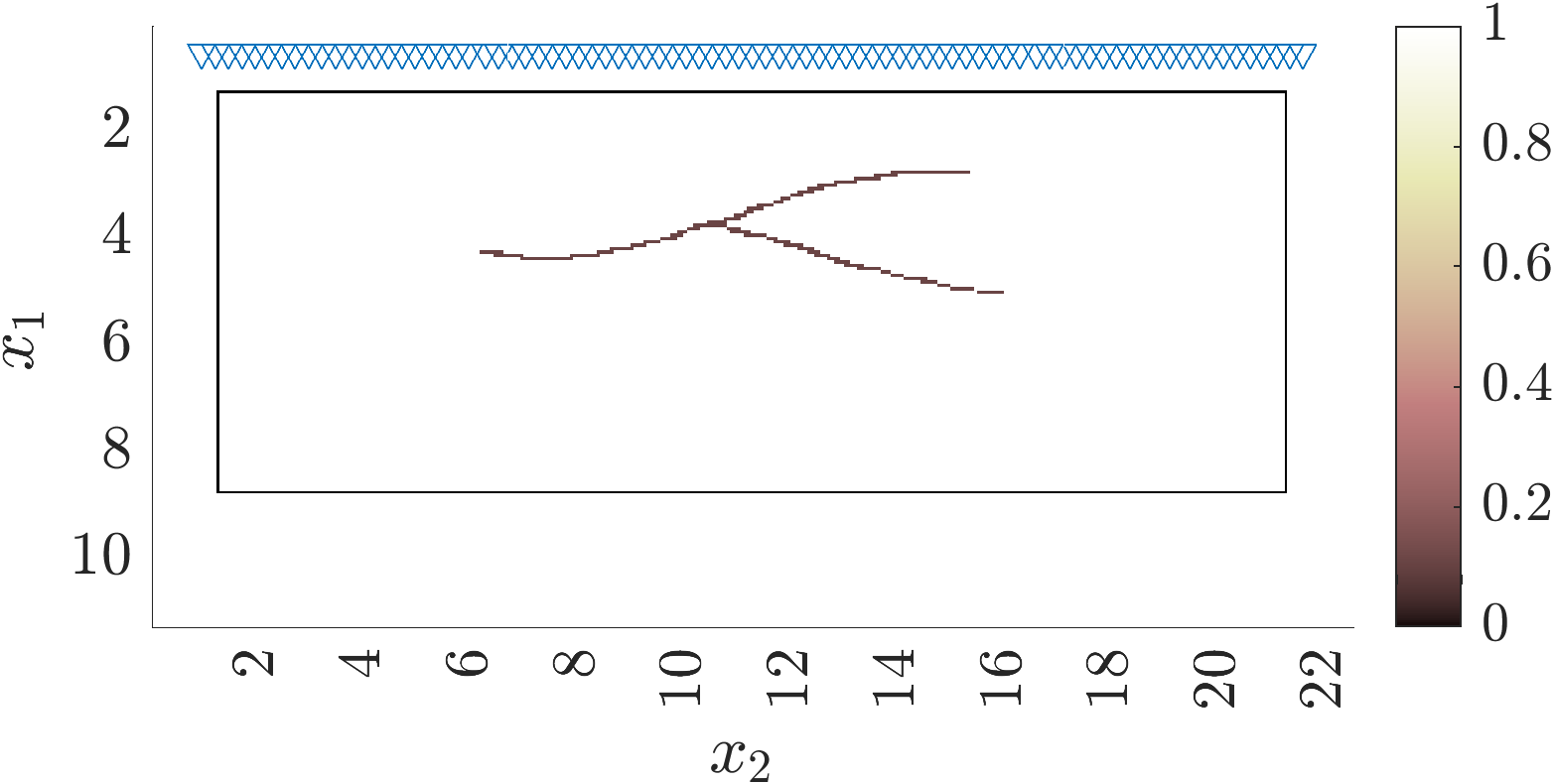}
\hspace{-0.05in}
\includegraphics[width = 0.33\textwidth]{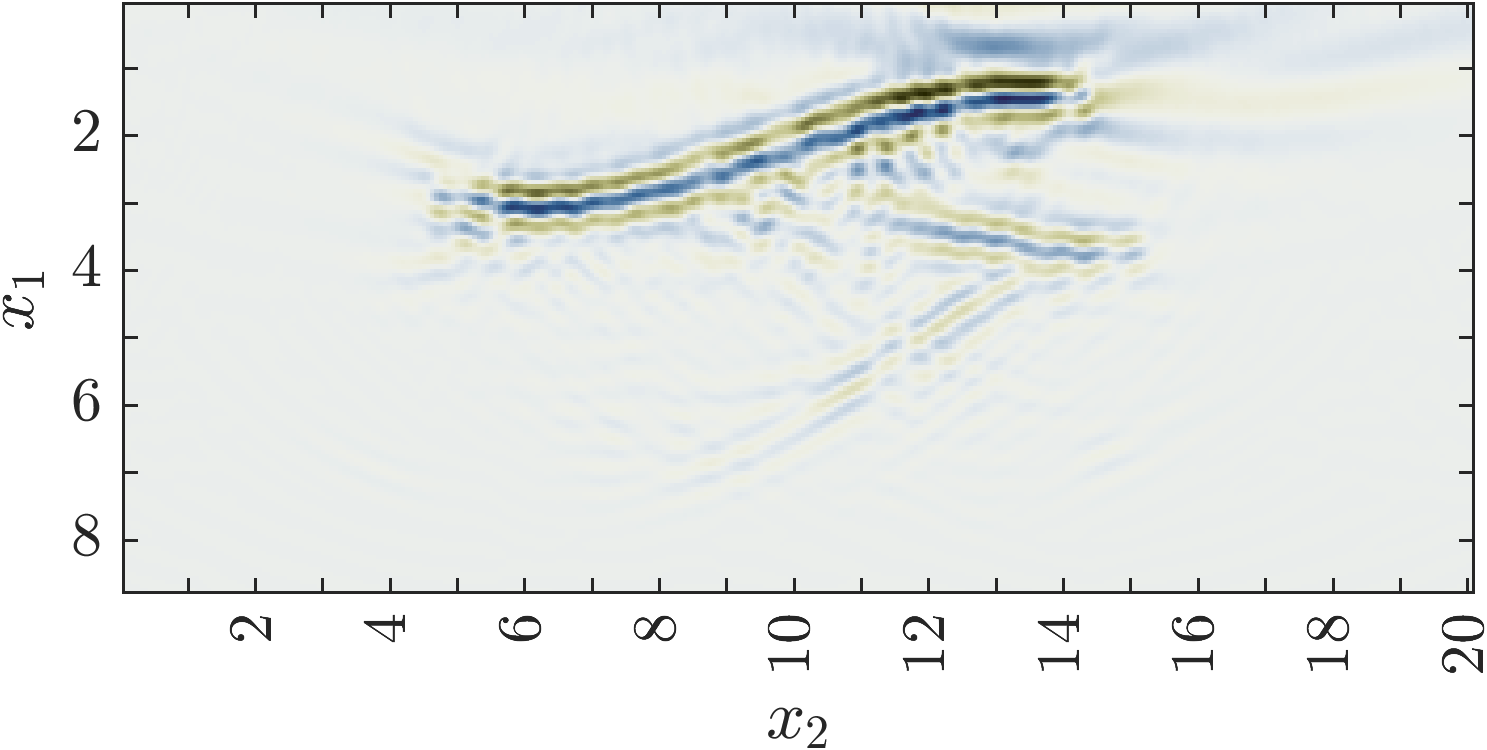}
\hspace{-0.07in}
\includegraphics[width = 0.33\textwidth]{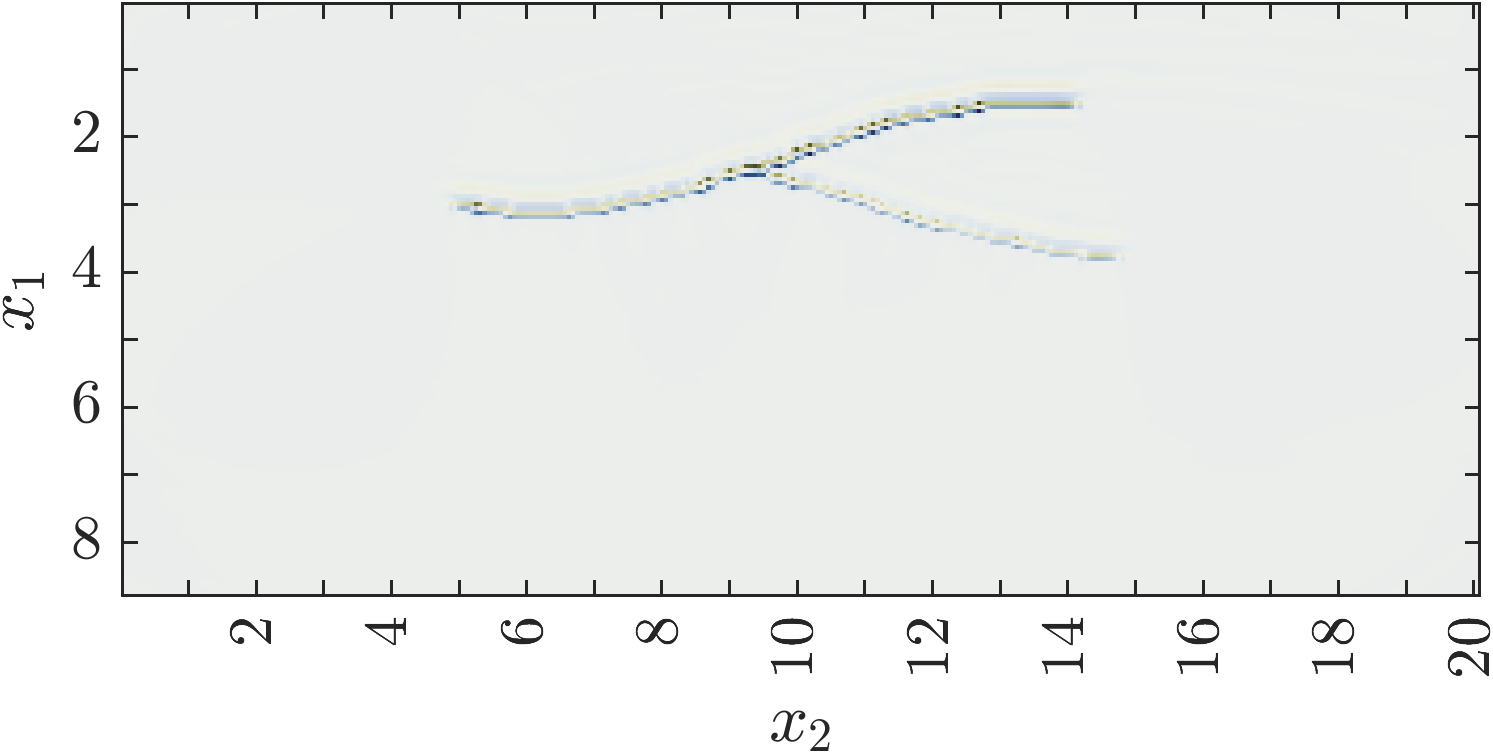}
\vspace{-0.1in}
\caption{Left: Display of the isotropic medium with a crack feature. The medium is modeled by the dielectric permittivity 
$\eps(\bx) \underline{\underline{\bI}}$ and the colorbar shows the contrast $\eps(\bx)/\eps_o$. The antennas are drawn as  triangles near the top boundary and the imaging domain $\Omega_{\rm im }$ is inside the black rectangle. Middle: The traditional imaging 
function~\eqref{eq:defIRTMpp}. Right: The ideal imaging function. The axes are in units of $\lambda_c$. The array aperture is $20 \lambda_c$ and there are 
$80$ antennas. The time step is $\tau = 0.3 \pi/w_c$.}
\label{fig:Crack1}
\end{figure}

\begin{figure}[h]
\centering
\begin{tabular}{cc}
$\cI^{(1,1)}$ & $\cI^{(2,2)}$ \\
\includegraphics[width = 0.35\textwidth]{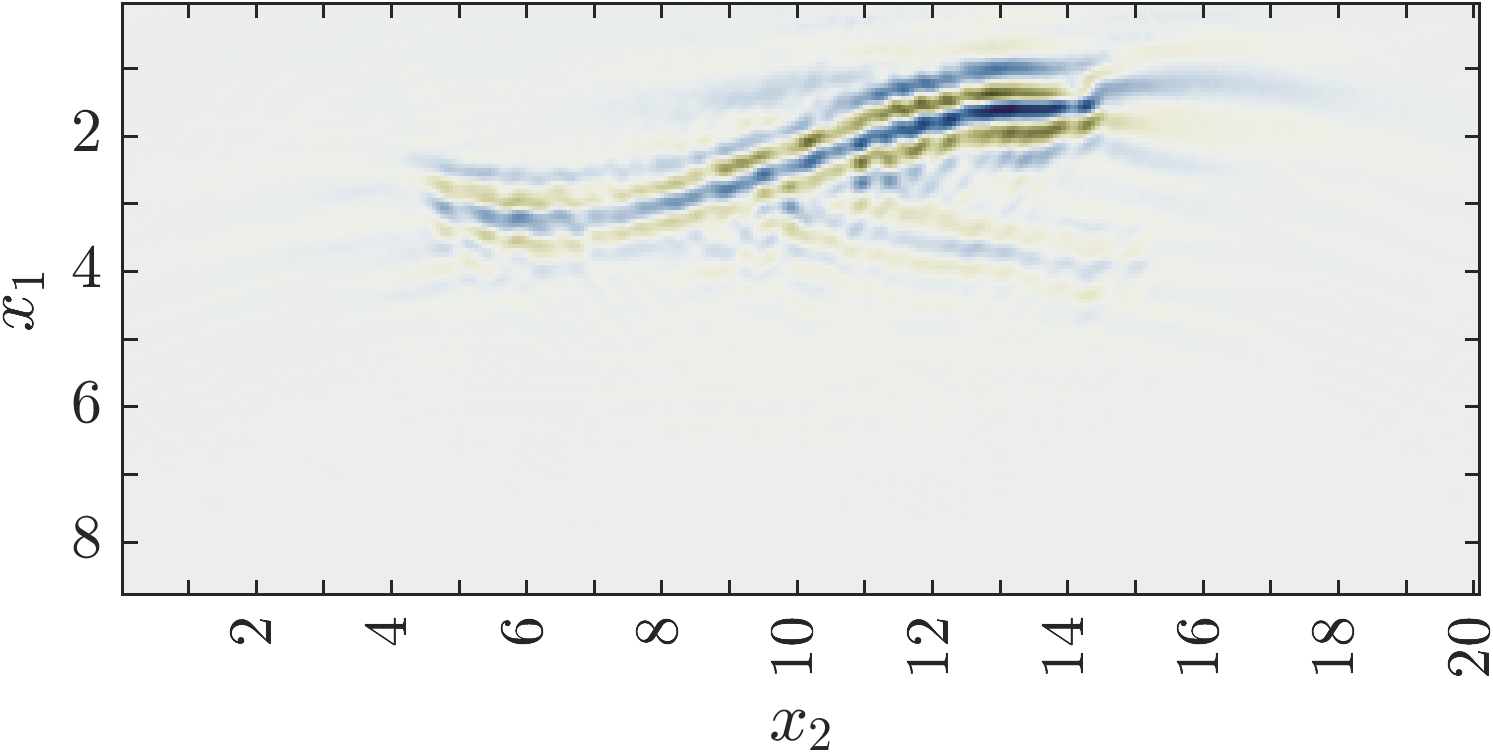} &\includegraphics[width = 0.35\textwidth]{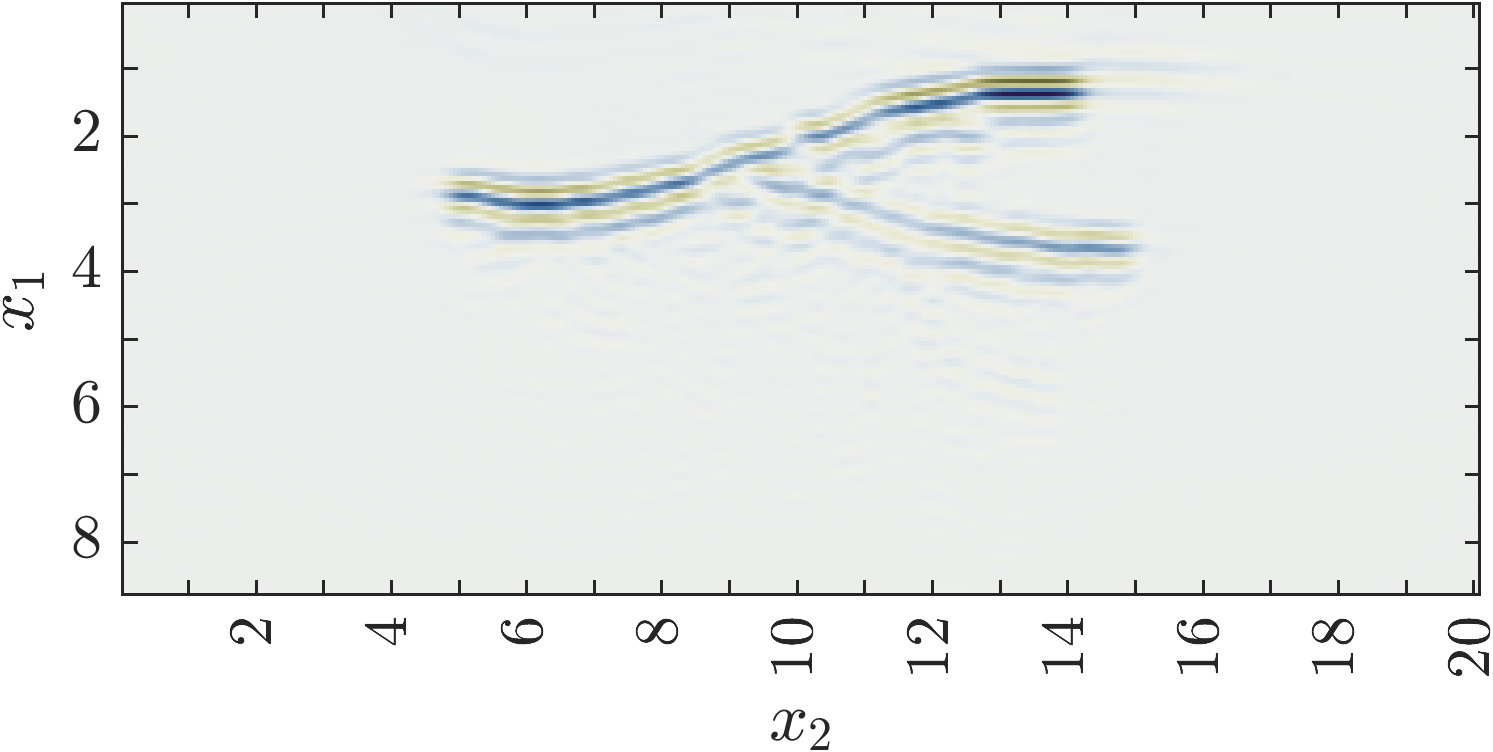} 
\\
$\cI^{(1,2)}$ & $\cI^{(2,1)}$\\
\includegraphics[width = 0.35\textwidth]{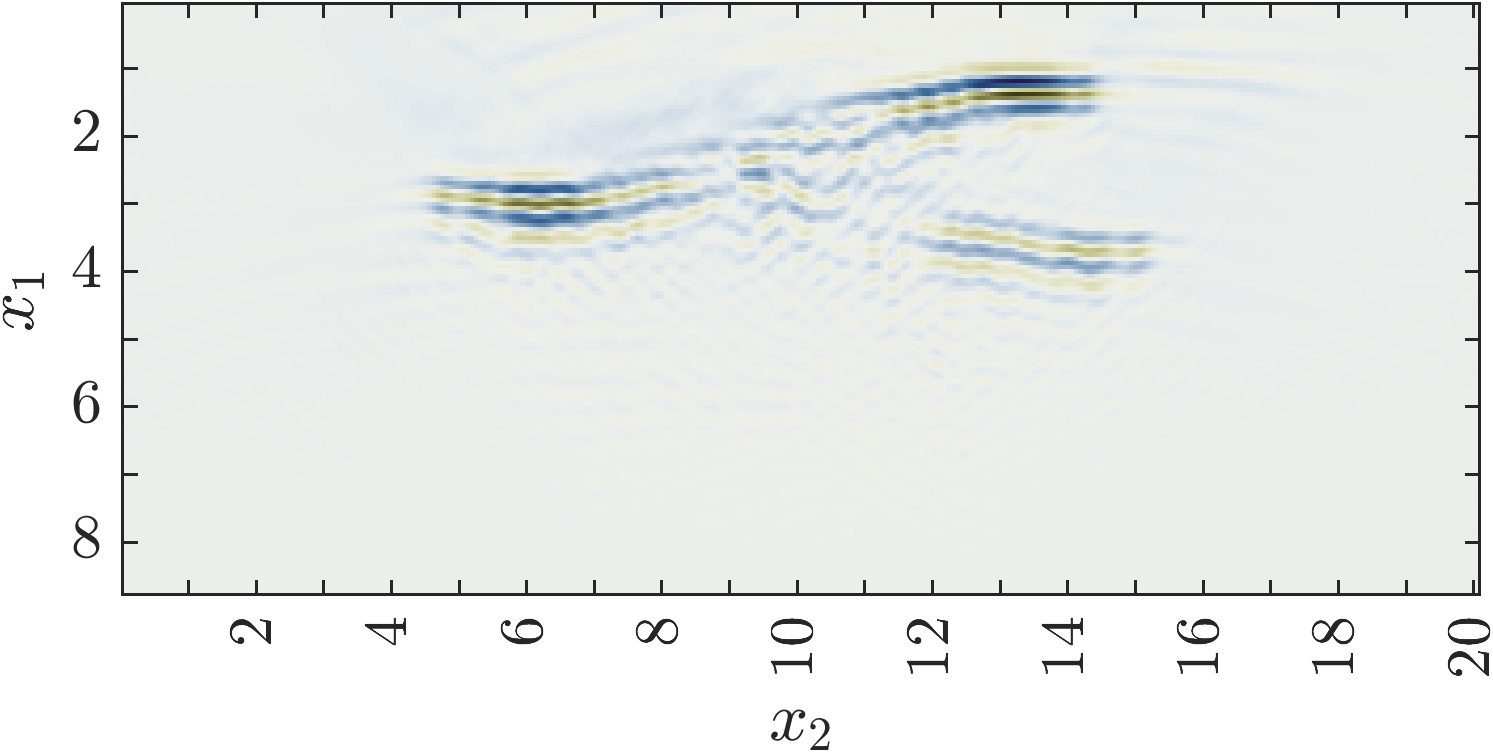}&\includegraphics[width = 0.35\textwidth]{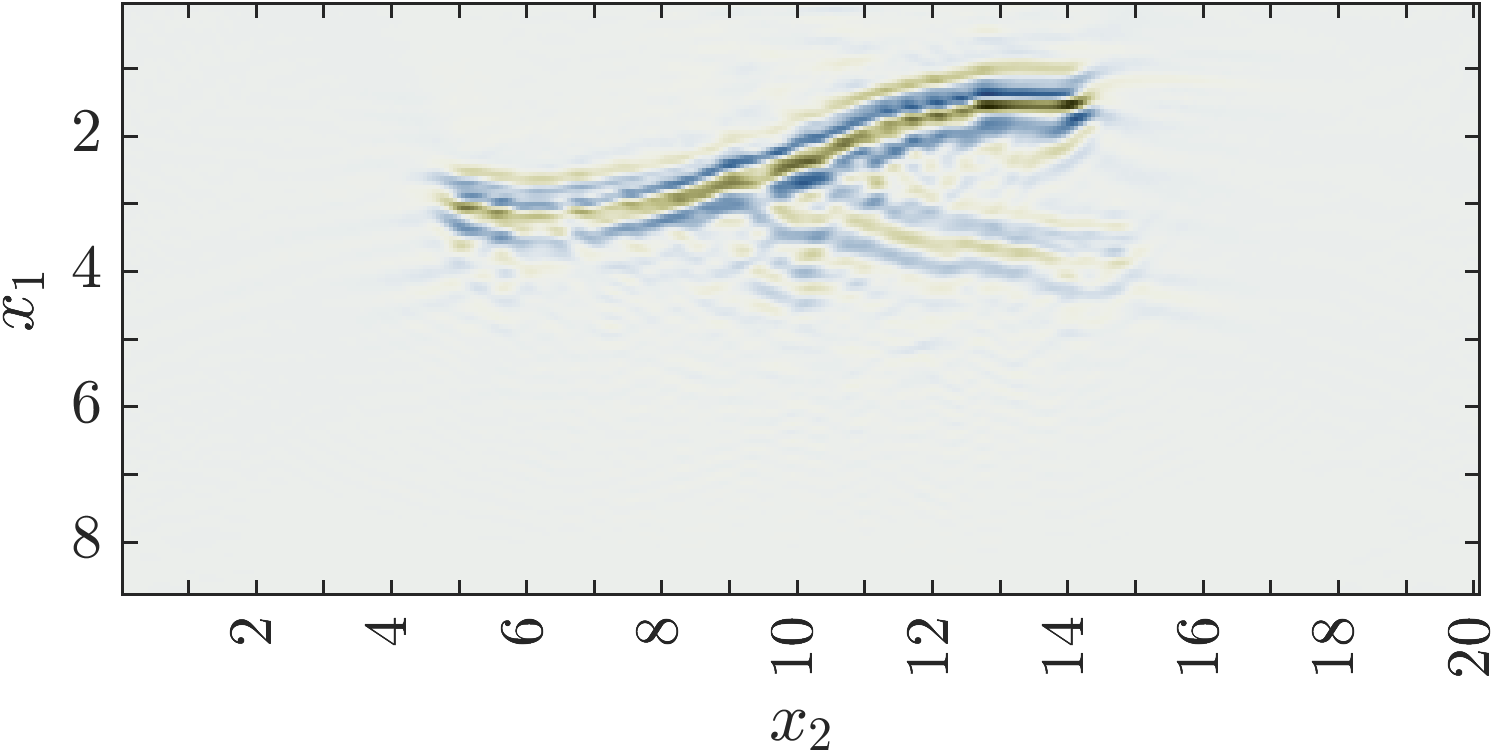} 
\end{tabular}
\vspace{-0.1in}
\caption{Imaging function~\eqref{eq:Jgpp}. The setup and axes are as in Fig. \ref{fig:Crack2}}
\label{fig:Crack2}
\end{figure}

The computable ROM based imaging functions~\eqref{eq:Jgpp} are displayed in Fig. \ref{fig:Crack2} for all $p,p' = 1,2$. 
The images identify the crack  better than the traditional approach (top middle plot in Fig. \ref{fig:Crack1}). The best image is $\cI^{(2,2)}$. This is expected  because we have a transversal  wave that propagates mainly along the range direction $\be_1$. The dominant component of this wave  is therefore along the cross-range axis  $\be_2$.  

We illustrate in Fig. \ref{fig:Crack3} the effect of $\tau$, the array aperture and sensor separation on the imaging function 
$\cI^{(2,2)}$. For each plot we perturb only one parameter about the reference values used in Fig. \ref{fig:Crack1}-\ref{fig:Crack2}:
$\tau = 0.3\pi/\om_c$, aperture $20 \la_c$ and sensor separation $\la_c/4$. Note how under sampling in time and space causes unwanted
ripples in the image. The aperture size is known to affect the cross-range resolution of traditional images. For our imaging function
the reduced aperture leads to poor imaging of the ends of the crack.

\begin{figure}[h]
\centering
\hspace{-0.15in}\includegraphics[width = 0.33\textwidth]{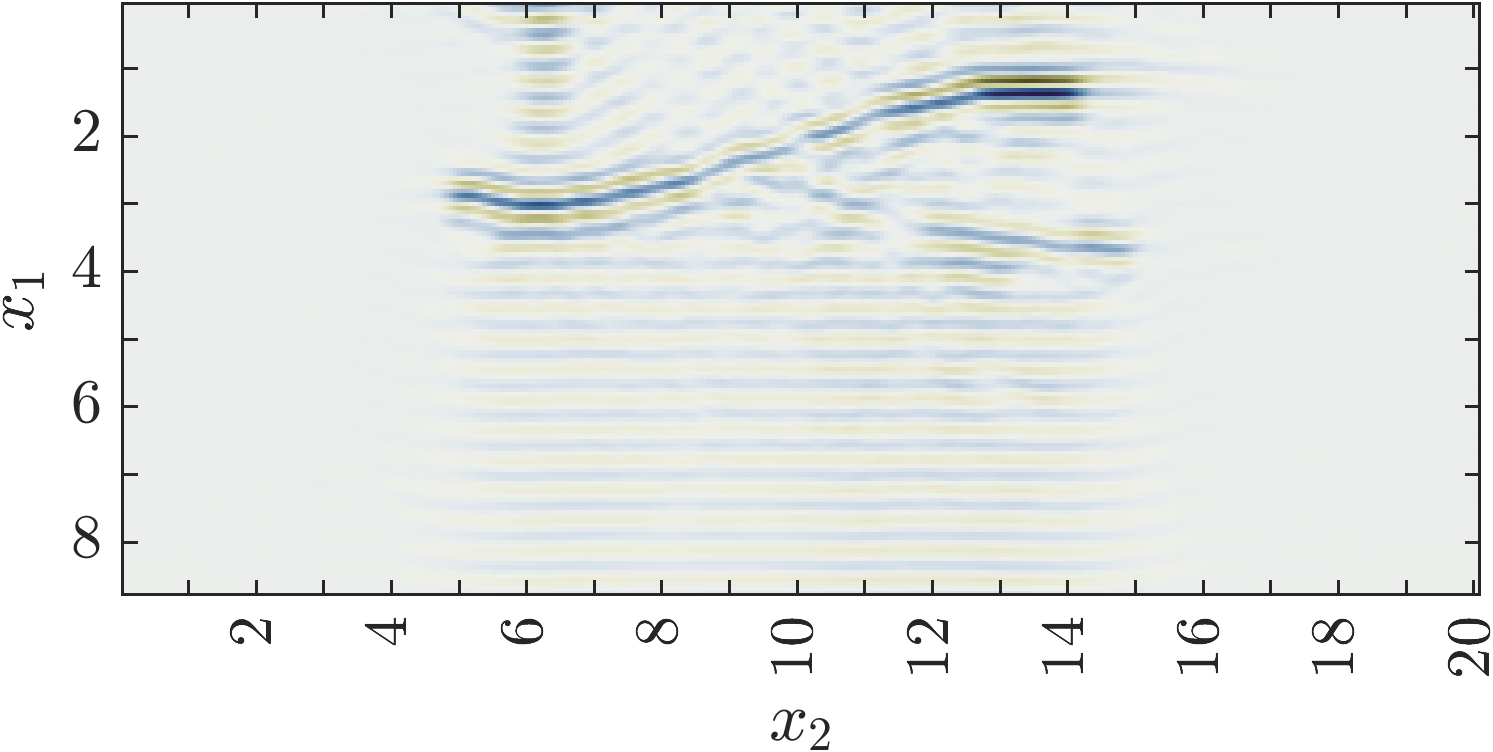}
\hspace{-0.05in}
\includegraphics[width = 0.33\textwidth]{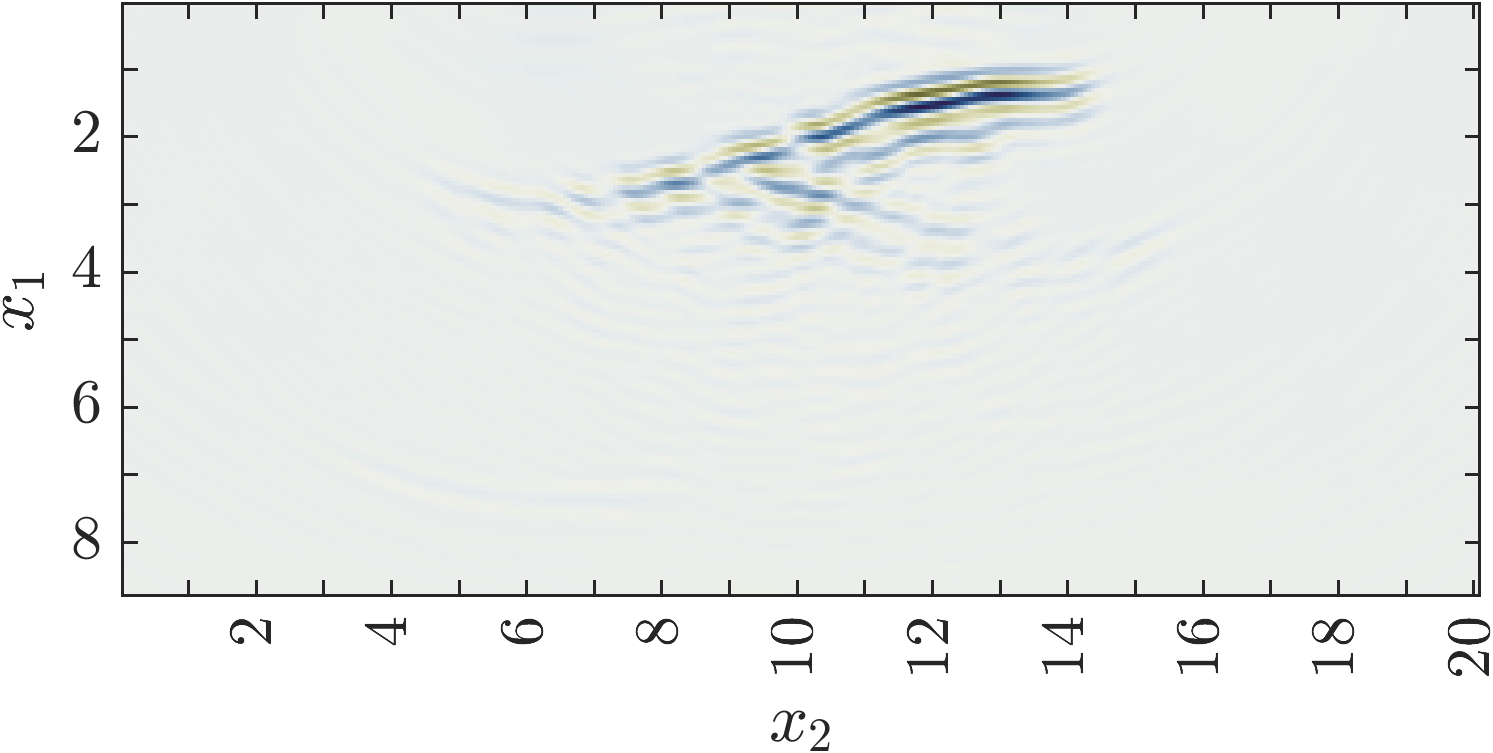}
\hspace{-0.05in}
\includegraphics[width = 0.33\textwidth]{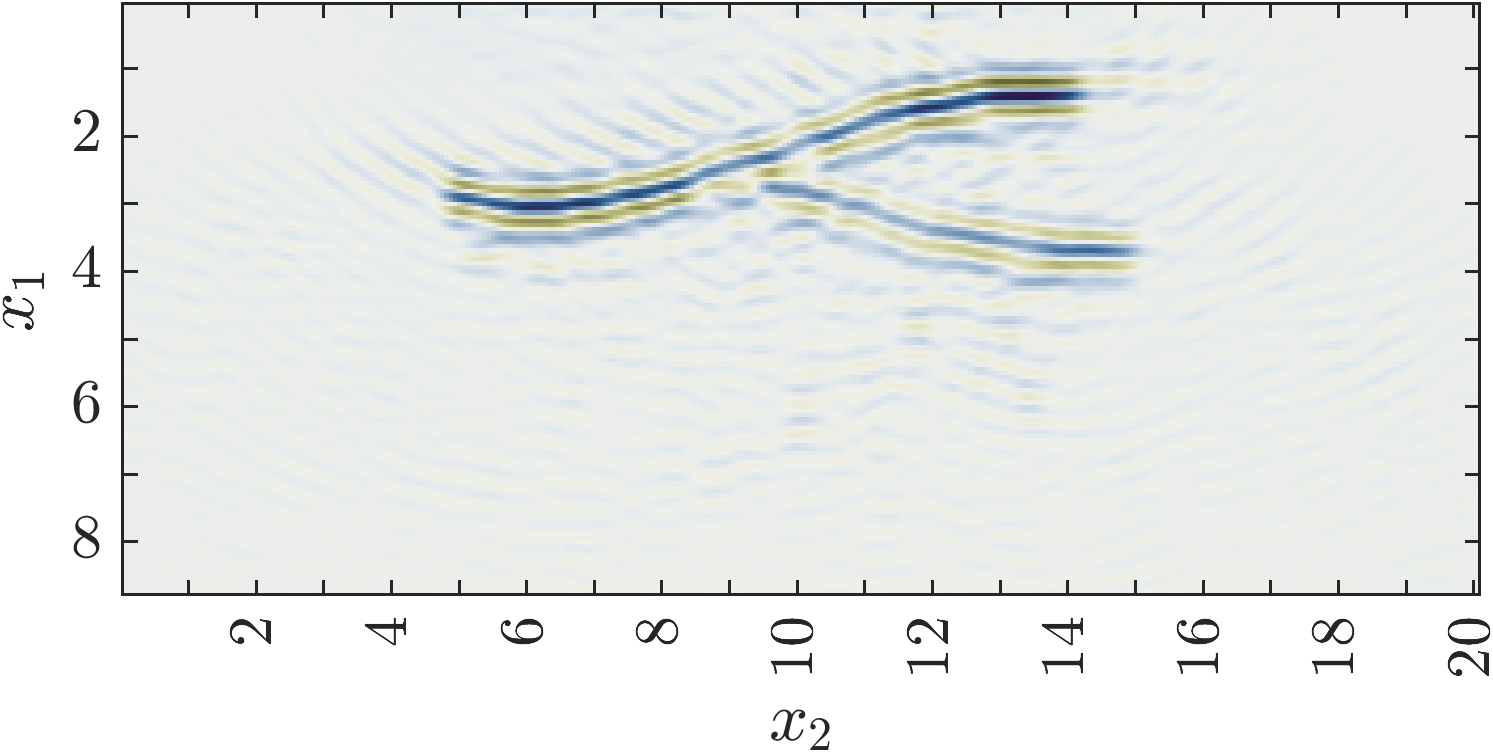}
\vspace{-0.1in}
\caption{Imaging function $\cI^{(2,2)}$ for the following perturbed parameters:  $\tau = 0.9 \pi/\om_c$ (left), array aperture  $5 \la_c$ (middle)
and sensor separation $\la_c$ (right). Compare these with the top right plot in Fig. \ref{fig:Crack2}.}
\label{fig:Crack3}
\end{figure}

The next example, shown in Fig. \ref{fig:Crack4} is for a more complicated medium, with multiple cracks. Note the ghost feature 
induced by the multiple scattering in the traditional image (middle plot) that makes it difficult to tell if there is a single long crack or 
two disconnected ones. Our imaging function (right plot) is clearly superior.

\begin{figure}[h]
\centering
\hspace{-0.15in}\includegraphics[width = 0.33\textwidth]{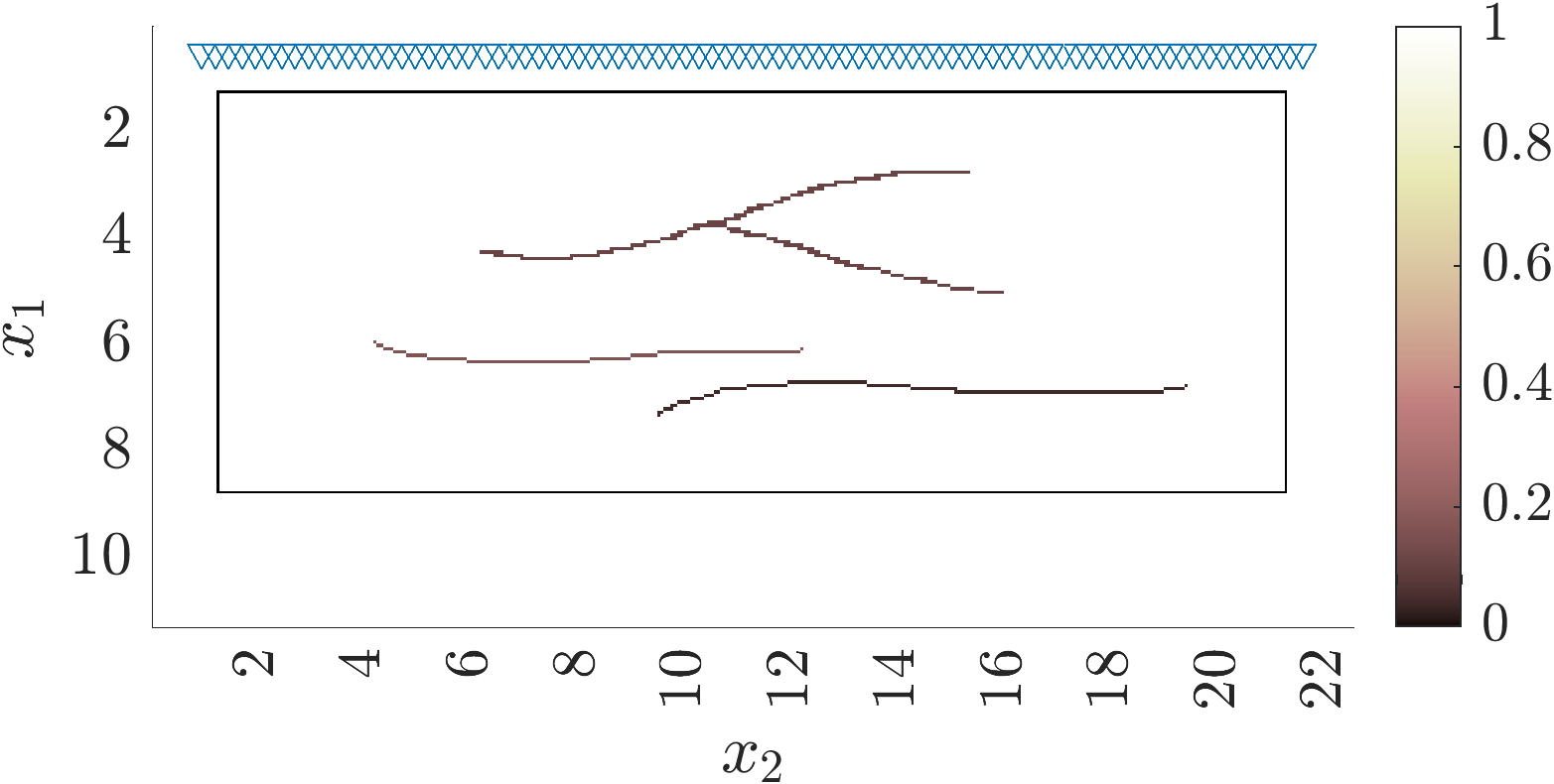}
\hspace{-0.05in}
\includegraphics[width = 0.33\textwidth]{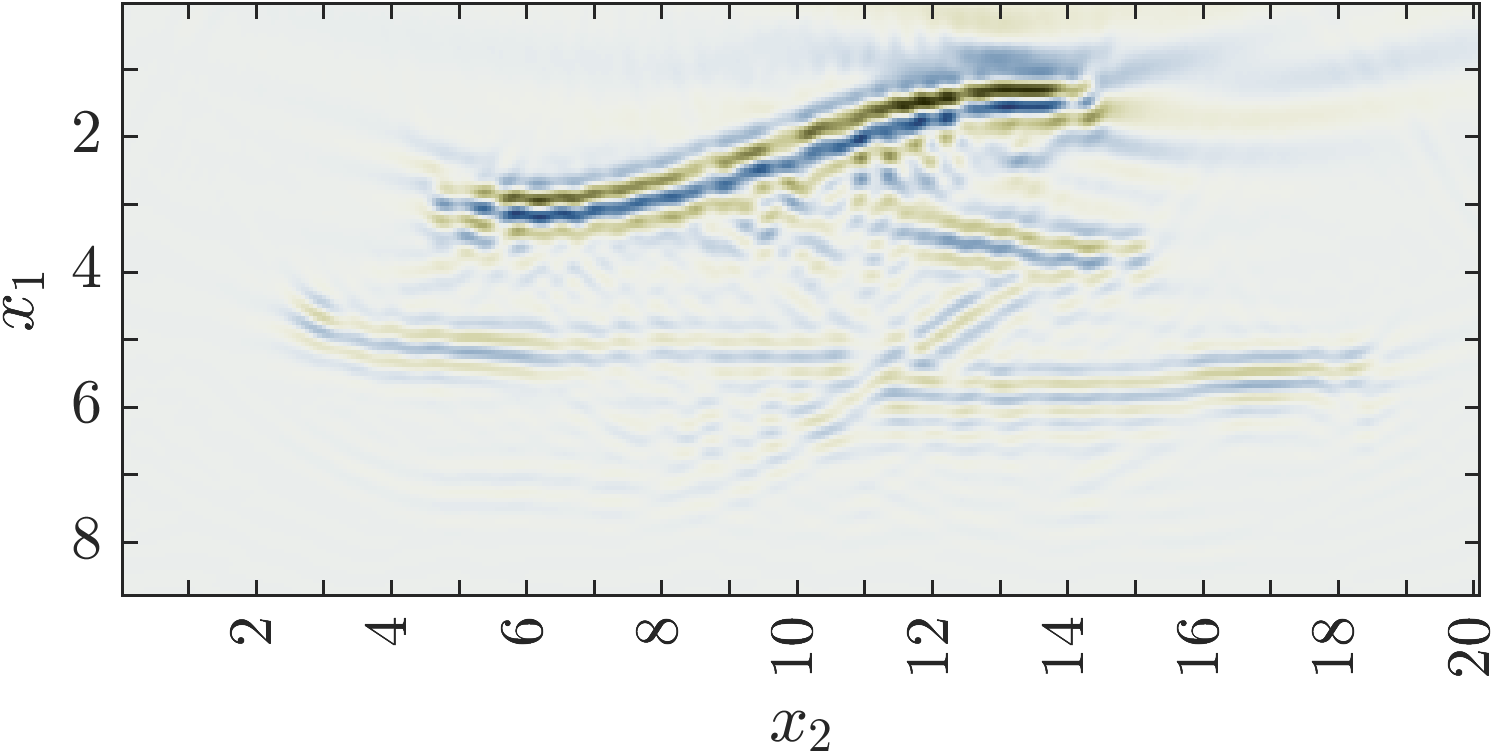}
\hspace{-0.05in}
\includegraphics[width = 0.33\textwidth]{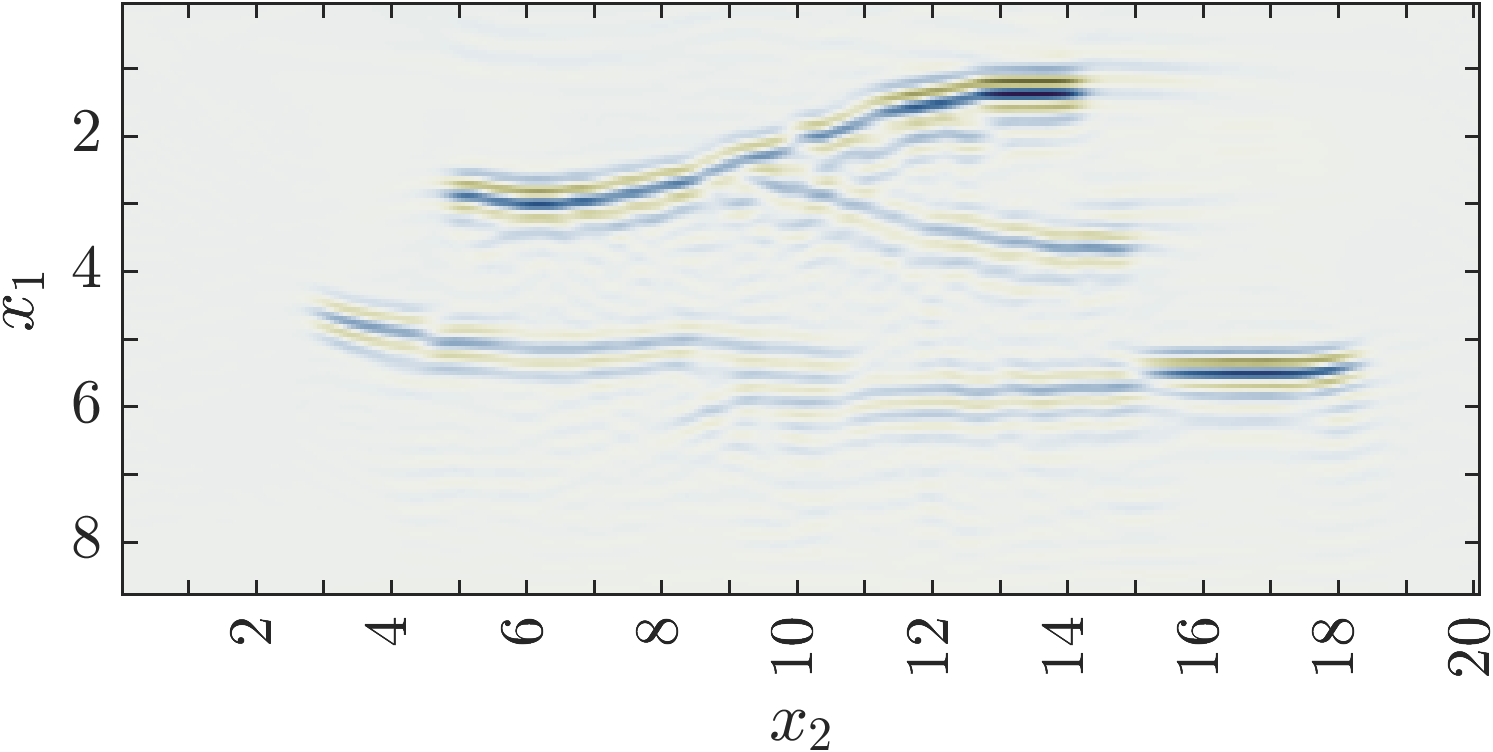}
\vspace{-0.1in}
\caption{Left: Display of the isotropic medium with multiple cracks. The medium is modeled by the dielectric permittivity 
$\eps(\bx) \underline{\underline{\bI}}$ and the colorbar shows the contrast $\eps(\bx)/\eps_o$. The antennas are drawn as  triangles near the top boundary and the imaging domain is inside the black rectangle. Middle: The traditional imaging 
function. Right: The imaging function $\cI^{(2,2)}$. The axes are in units of $\lambda_c$. The array aperture is $20 \lambda_c$ and there are $80$ antennas. The time step is $\tau = 0.3 \pi/w_c$.}
\label{fig:Crack4}
\end{figure}

\subsubsection{Imaging in anisotropic media} 
Imaging methods are not designed to give quantitative estimates of the components of $\bc$. 
The definition of the initial state ~\eqref{eq:TR3} in the time reversal experiment shows that 
even if we have a jump in a single component of $\bc$, that will be mapped to variations in $\cI^{(p,p')}$ for all $p,p' = 1,2$. 
Thus, we expect our images to indicate the location of all the jumps of $\bc$. Indeed, this is the case, as illustrated in 
Fig. \ref{fig:Crack6}, for the medium shown in Fig. \ref{fig:Crack5}. We do not display the traditional image for this case, 
because it does not bring additional insight from what is shown in the figures above.

\begin{figure}[h]
\centering
\begin{tabular}{ccc}
$c_{1,1}/c_o$ & $c_{2,2}/c_o$  & $c_{(1,2)}/c_o$\\ 
\hspace{-0.13in}\includegraphics[width = 0.32\textwidth]{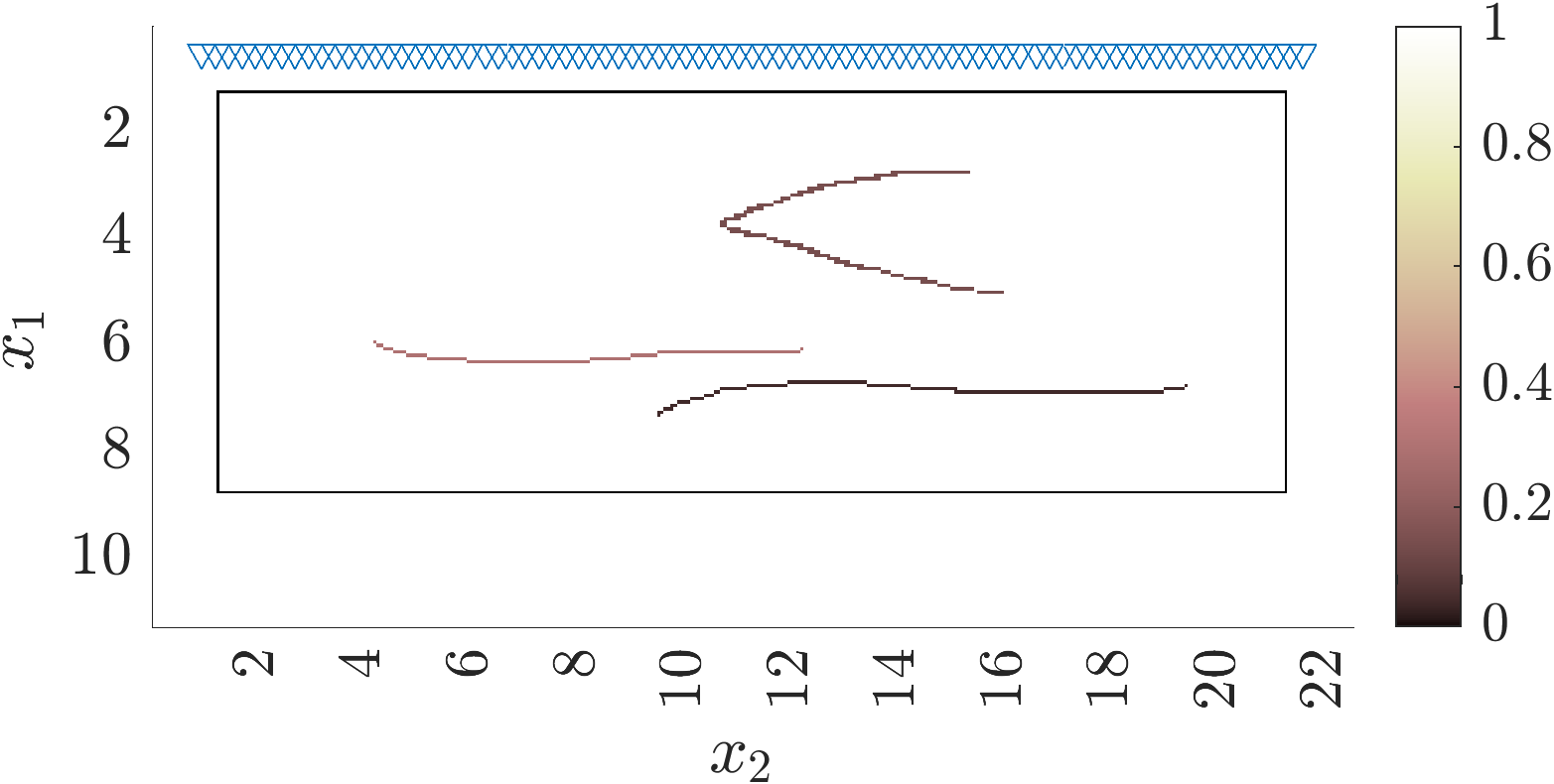}&\hspace{-0.13in} \includegraphics[width = 0.32\textwidth]{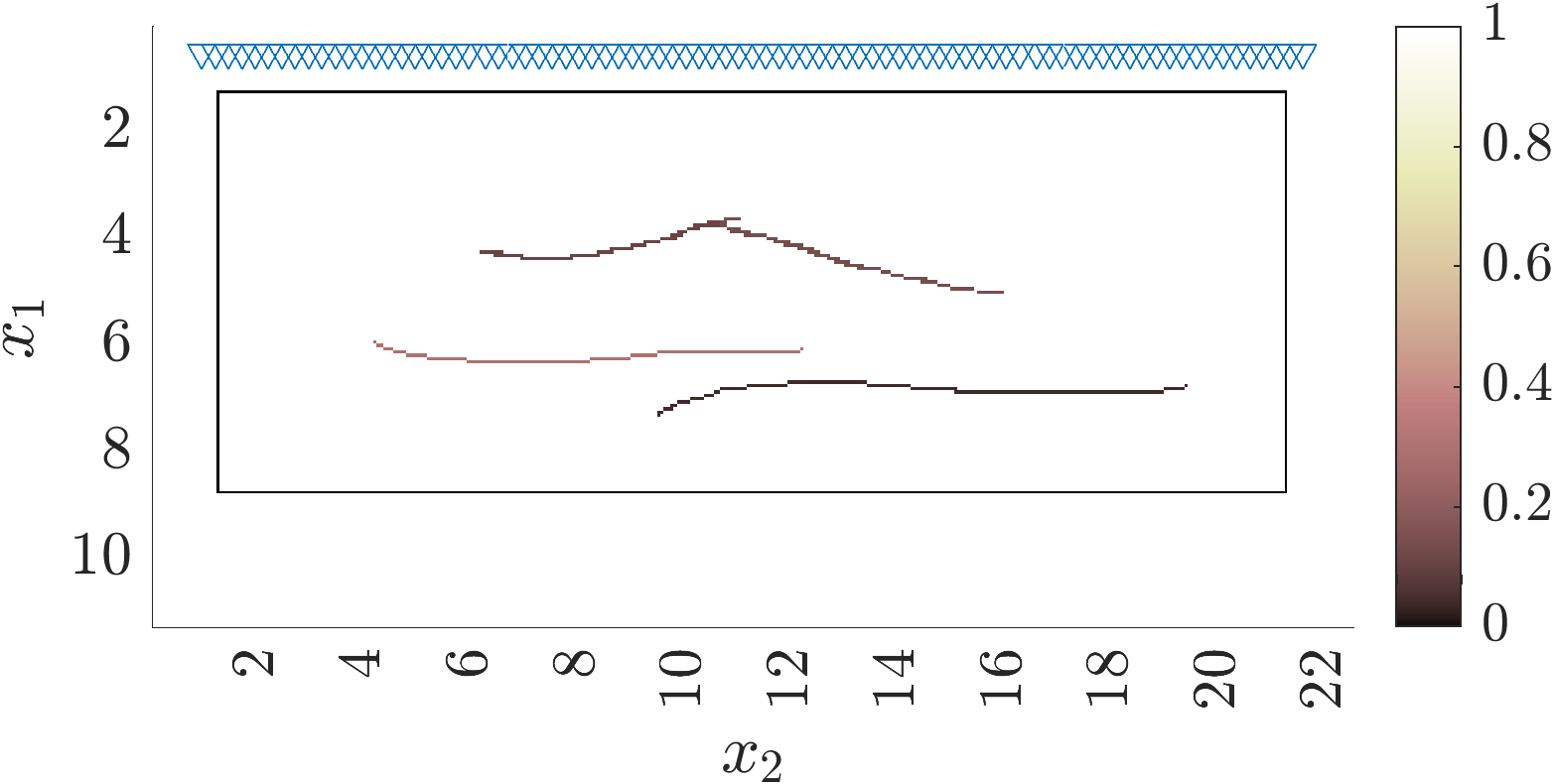} &\hspace{-0.13in}
\includegraphics[width = 0.32\textwidth]{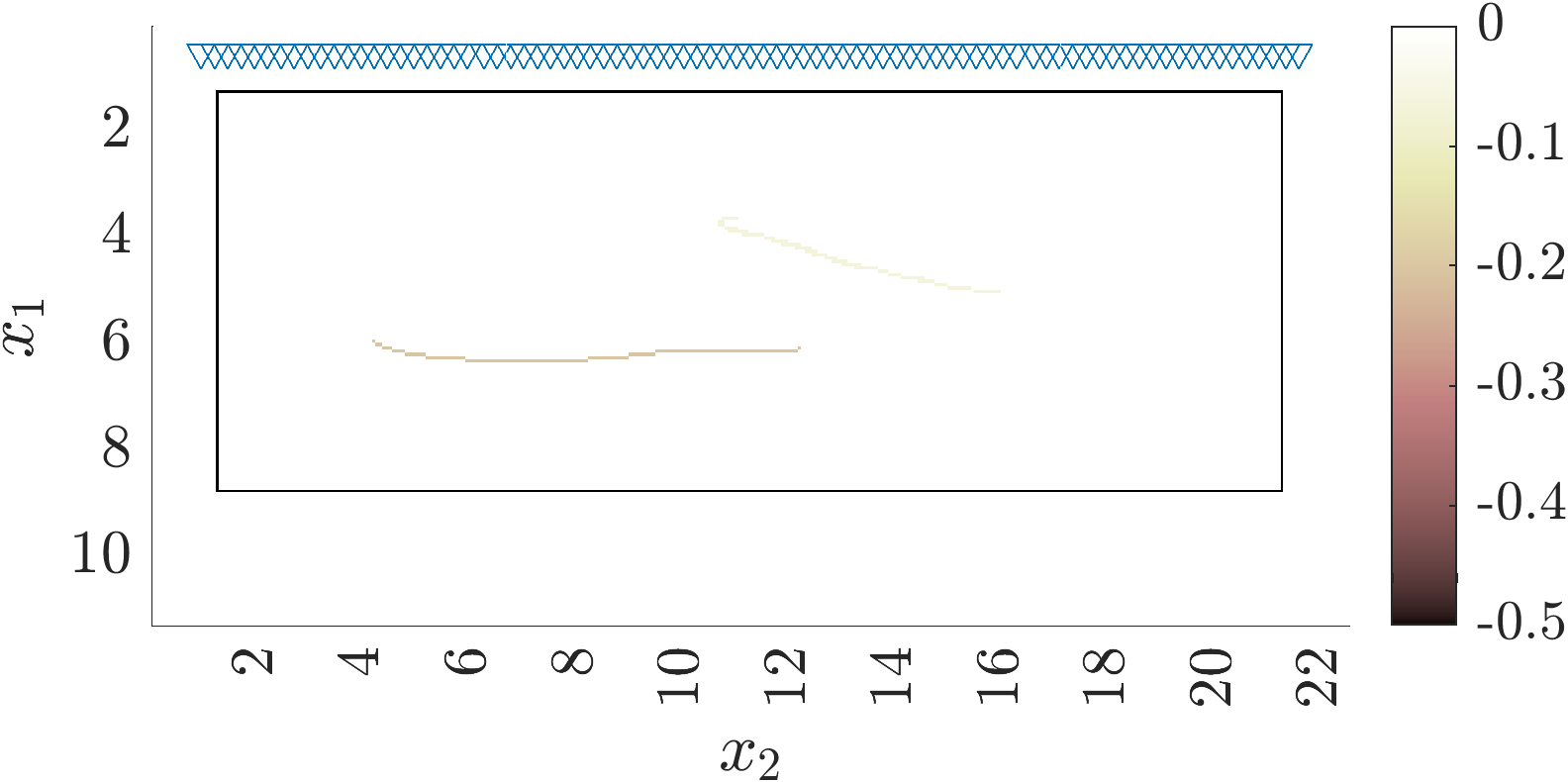}
\end{tabular}
\vspace{-0.1in}
\caption{The anisotropic medium. The colorbar is the same in all the plots.}
\label{fig:Crack5}
\end{figure}

\begin{figure}[h]
\centering
\begin{tabular}{cc}
$\cI^{(1,1)}$ & $\cI^{(2,2)}$ \\
\includegraphics[width = 0.35\textwidth]{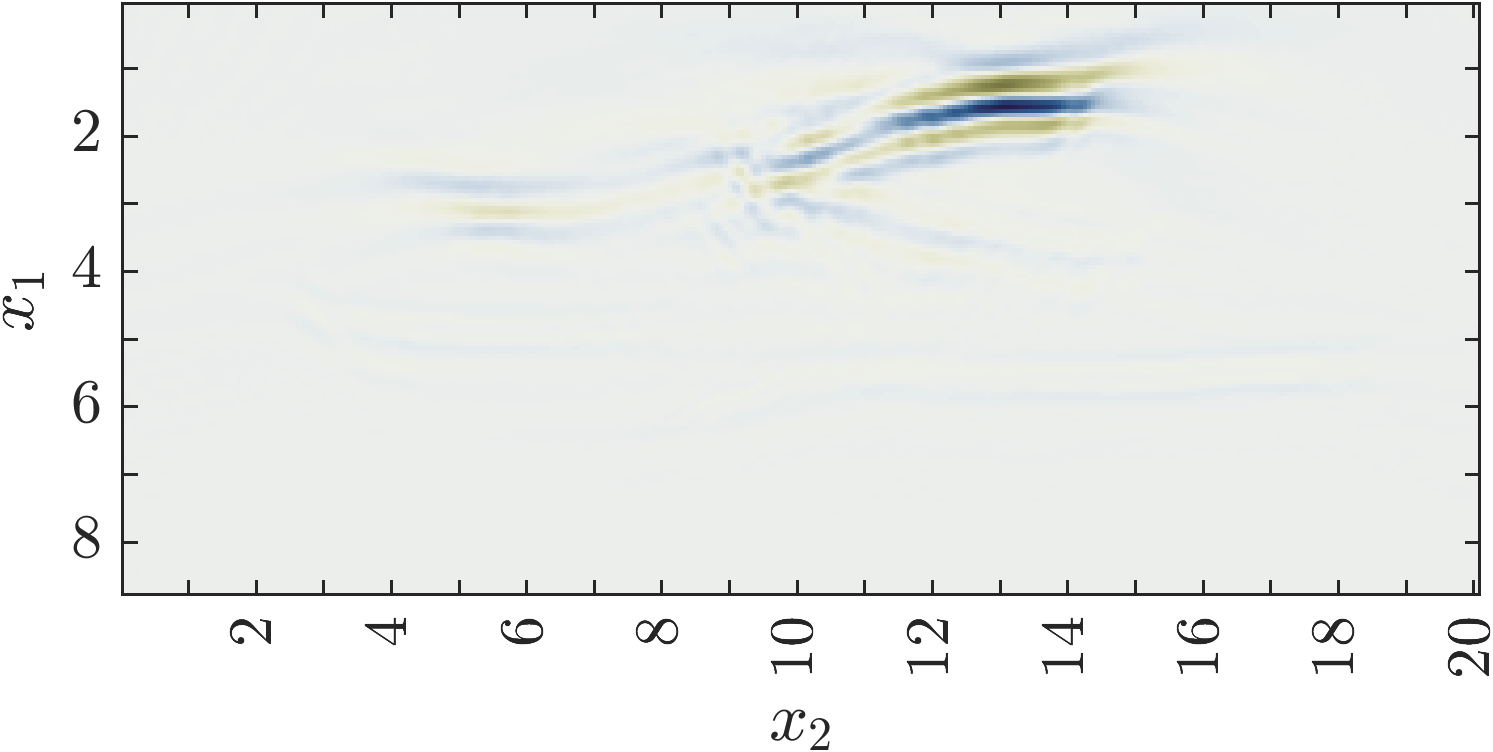} &\includegraphics[width = 0.35\textwidth]{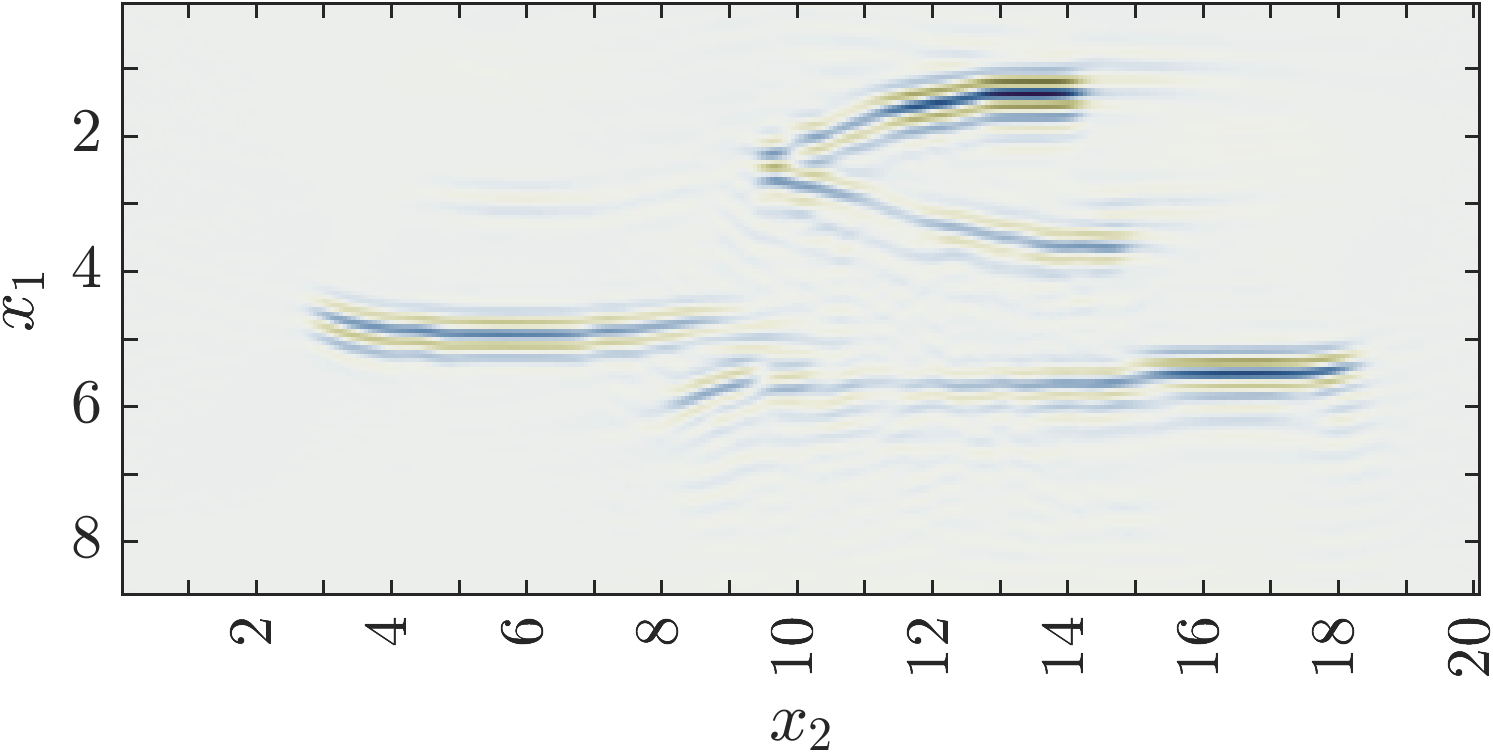} 
\\
$\cI^{(1,2)}$ & $\cI^{(2,1)}$\\
\includegraphics[width = 0.35\textwidth]{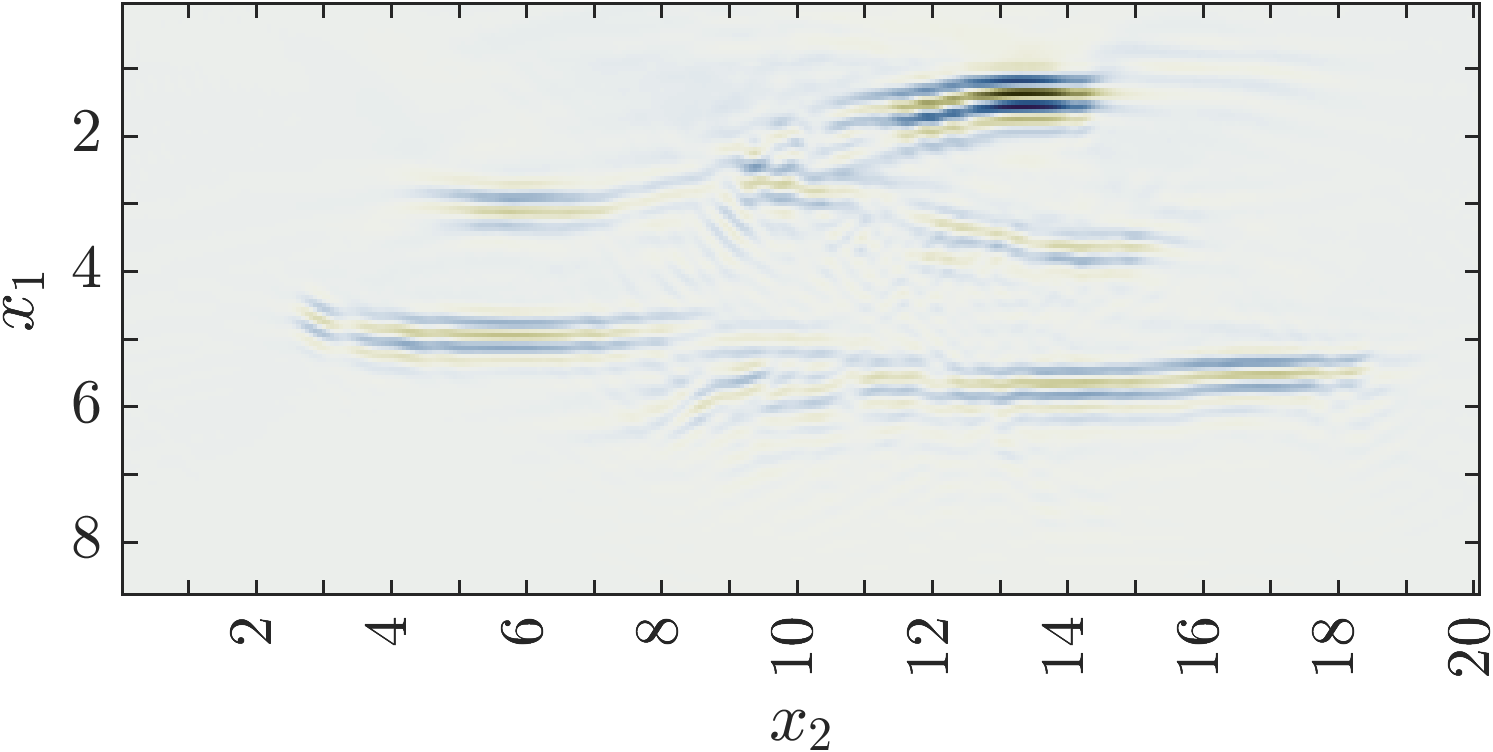}&\includegraphics[width = 0.35\textwidth]{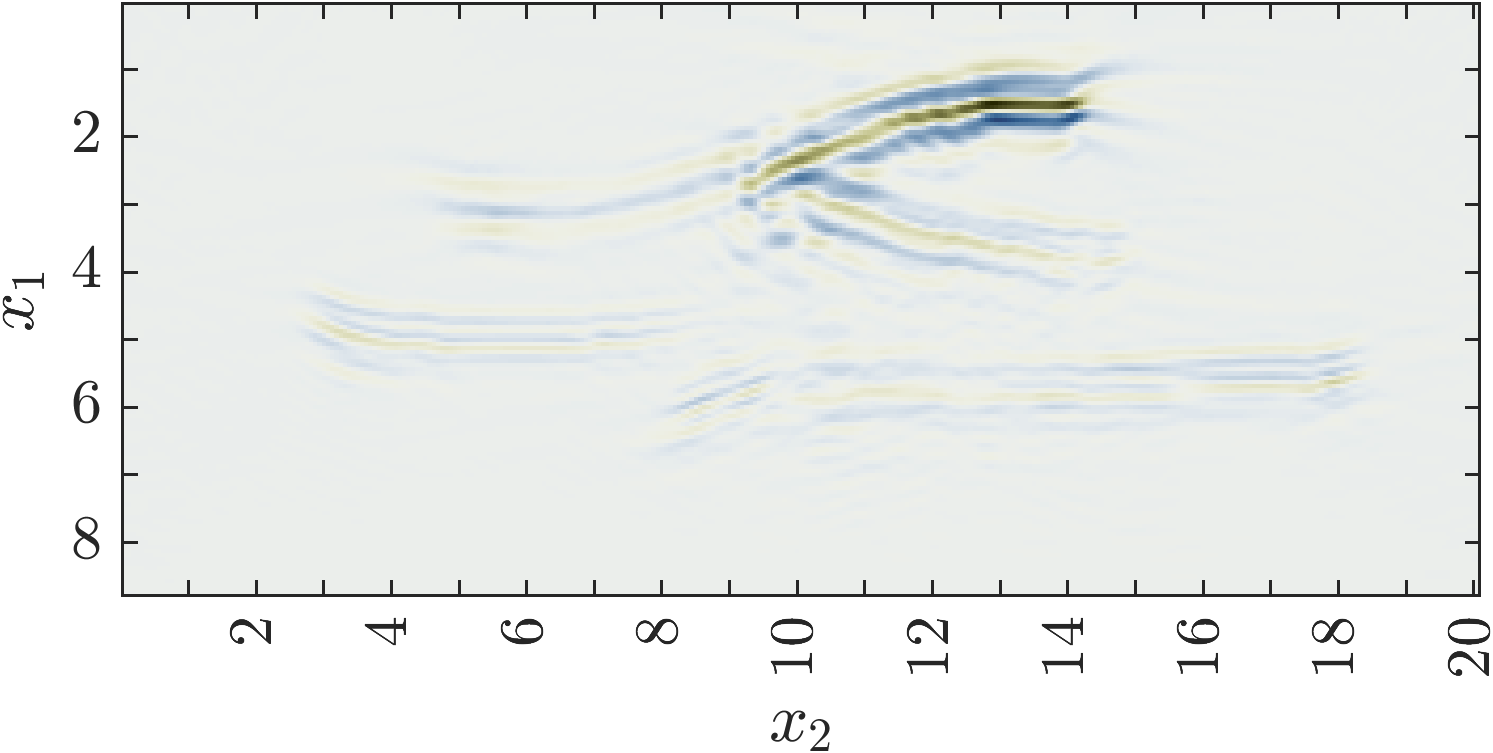} 
\end{tabular}
\vspace{-0.1in}
\caption{Imaging function~\eqref{eq:Jgpp} for all polarization pairs. The setup and axes are as in Fig. \ref{fig:Crack2} and the medium has the dielectric permittivity shown in Fig. \ref{fig:Crack5}.}
\label{fig:Crack6}
\end{figure}

\section{Inversion}
\label{sect:INVERSE}
In this section we consider the quantitative estimation of the matrix valued wave speed $\bc$. This is traditionally 
done via optimization, as described in the introduction and in equation~\eqref{eq:FWI2}. In light of our data mapping 
described in section~\ref{sect:waveDec}, we reformulate the traditional (FWI) approach as 
\begin{equation}
\min_{\tilde \bc \in \mathscr{C}} \mathcal{O}^{\FWI}(\tilde \bc) + \mbox{regularization}, ~~\mathcal{O}^{\FWI}(\tilde \bc) = \tau \sum_{j=0}^{2n-1} \left\| \bbD(j \tau) - \bbD(j \tau;\tilde \bc) \right\|_F^2.
\label{eq:Inv1}
\end{equation}
We introduce next a different approach to inversion, using our estimated internal wave~\eqref{eq:EstWave}. As explained in 
section~\ref{sect:INT_WAVE1}, this wave fits the data by construction. However, due to the wrong orthonormal basis 
stored in $\bV( \cdot; \tilde \bc)$, the  wave~\eqref{eq:EstWave} is not a solution of the wave equation.  If it was a solution, 
then $\tilde \bc$ would be close to the true wave speed, assuming uniqueness and stability of the inverse wave scattering problem, which holds for proper regularization and an appropriate search space $\mathscr{C}$. 

The solution of the wave equation with wave speed $\tilde \bc$ is given by~\eqref{eq:BornWave}. 
Thus, we can minimize the internal wave misfit
\begin{align}
\int_{\Omega} d \bx \, \sum_{j=0}^{n-1} \left\| \bu_j^{\rm est}(\bx;\tilde \bc) - \bu_j(\bx;\tilde \bc) \right\|_F^2 &\stackrel{\eqref{eq:EstWave},\eqref{eq:BornWave}}{=} 
\int_{\Omega} d \bx \, \left\| \bV(\bx,\tilde \bc) \left[\bR - \bR(\tilde \bc)\right] \right\|_F^2 \nonumber \\&= 
 \left\|\bR - \bR(\tilde \bc) \right\|_F^2,
\label{eq:Inv2}
\end{align}
where the last equality is due to the orthonormality of the basis in $\bV( \cdot; \tilde \bc)$. 
We estimate the wave speed using the optimization
\begin{equation}
\min_{\tilde \bc \in \mathscr{C}} \mathcal{O}(\tilde \bc) + \mbox{regularization}, ~~\mathcal{O}(\tilde \bc) = \left\|\bR(\tilde \bc) \bR^{-1} - \bI_{2nm} \right\|_F^2,
\label{eq:Inv3}
\end{equation}
where the objective function is a slight modification of~\eqref{eq:Inv2}, motivated by 
the following:  It is known that variations of $\tilde \bc$ that are far from the array can produce weak reflections that 
are masked from reflections from shallower depth. These weak reflections appear as small entries in the data matrices 
and, according to Theorem \ref{thm.1}, the Gramian $\bbM$.  These, in turn, result in small eigenvalues 
of the square root $\bR$ of $\bbM$. By using the inverse $\bR^{-1}$ in the definition of the objective function\footnote{Once we regularize the ROM construction, the mass matrix and its block Cholesky square root $\bR$ are well conditioned, so inverting $\bR$ is not an issue.}, 
we expect to recover the hidden reflections that are distinguishable from noise (recall the discussion in section 
\ref{sect:ROMnoise}) and thus image better.

\subsection{The inversion algorithm}
\label{sect:invAlg}
There are two user defined components of the inversion algorithm:  The first is the choice of the search space $\mathscr{C}$  
and the second is the choice of the regularization in the optimization~\ref{eq:Inv3}. 

We define $\mathscr{C}$ as follows:
First, we parametrize the Cholesky square root of the squared inverse wave speed tensor 
\begin{equation}
\mu_o \tilde{\beps}(\bx) \stackrel{\eqref{eq:defc}}{=} 
 \bc^{-2}(\bx) = \bth^T(\bx) \bth(\bx),  \qquad \bth(\bx) = \begin{pmatrix} \gamma_1(\bx) & \gamma_3(\bx) \\ 0 & \gamma_2(\bx) \end{pmatrix},
\label{eq:Inv4}
\end{equation}
to ensure that $\tilde{\beps}$ remains symmetric and positive definite. The parametrization is done using the basis functions 
$\{ \phi_j \}_{j=1}^N$ described in Appendix \ref{ap:num}, 
\begin{equation}
\gamma_l(\bx) = (1-\delta_{l,3}) c_o^{-1} + \sum_{j=1}^N \alpha_{l,j} \phi_j(\bx),
\label{eq:Inv5}
\end{equation}
with $
\boldsymbol{\alpha} = \left(\alpha_{1,1} , \ldots, \alpha_{3,N} \right) \in \RR^{3N}$ to be determined by optimization.

We use the simplest Tikhonov type regularization penalty $ \nu\|\boldsymbol{\alpha}\|^2$ in~\eqref{eq:Inv3},
with parameter $\nu$ chosen as explained in Appendix \ref{ap:num}. However, other choices of regularization, 
that incorporate prior information about $\bc$ can be used and would likely lead to sharper estimates.

\vspace{0.05in}
\begin{algorithm} \textbf{\emph{(Inversion algorithm)}}
\label{alg:AlgR}

\vspace{0.04in} \noindent \textbf{Input:} Data matrices $\bbD(j \tau)$, for $j = 0, \ldots, 2n-1$.

 \vspace{0.04in} \noindent 1. Compute $\bbM$ with block entries given in equation~\eqref{eq:calcGram}.
  If $\bbM$ is indefinite or poorly conditioned, use regularization as described in section
\ref{sect:ROMnoise}.

\vspace{0.04in} \noindent 2. Compute the block Cholesky square root $\bR$ of $\bbM$. 

  \vspace{0.04in} \noindent 3. Starting with  $\boldsymbol{\alpha}^{(0)} = {\bf 0}$  proceed:

 \begin{itemize}
 \itemsep 0.03in
\item  For update index $j \ge 1$ calculate $\bth(\bx)$ as in equation~\eqref{eq:Inv5}, with $\boldsymbol{\alpha} = \boldsymbol{\alpha}^{(j-1)}.$ Compute $\tilde \bc$ from equation~\eqref{eq:Inv4}. 
\item Calculate $\bR(\tilde \bc)$ following the same procedure of calculating $\bR$.
\item Compute $\boldsymbol{\alpha}^{(j)}$ as a Gauss-Newton update for minimizing the objective function~\eqref{eq:Inv3} with
the user's choice of the regularization parameter $\nu$.
\item Go to the next iteration or stop when the user defined convergence criterion has been met.
\end{itemize}

\vspace{0.04in} \noindent \textbf{Output:}  The estimate of $\bc$ given by~\eqref{eq:Inv4}-\eqref{eq:Inv5} with $\boldsymbol{\alpha}$ calculated at step 3.
\end{algorithm}

\vspace{0.05in}
Note that since the ROM construction is causal, the inversion can be carried out in a layer peeling fashion, by time windowing
the data. If we use measurements up to time instant $t_{n'}$, for $n' < n$, then the ROM contains information 
up to the distance of order $c_o t_{n'}/2$ from the array. Thus, we can estimate the medium near the array, and then increase $n'$ 
to obtain estimates at further distance. This can be very useful in speeding up the optimization. 

\subsection{Numerical results} We illustrate the performance of the inversion algorithm for the medium shown in 
Fig. \ref{fig:inv1}. There are three anisotropic inclusions, modeled by the piecewise constant wave speed 
$\bc$ with components $c_{j,j'}$ plotted in the figure, for $j,j' = 1,2$.

\begin{figure}[h]
\centering
\begin{tabular}{ccc}
$c_{1,1}/c_o$ & $c_{2,2}/c_o$  & $c_{1,2}/c_o$\\ 
\hspace{-0.05in}\includegraphics[width = 0.3\textwidth]{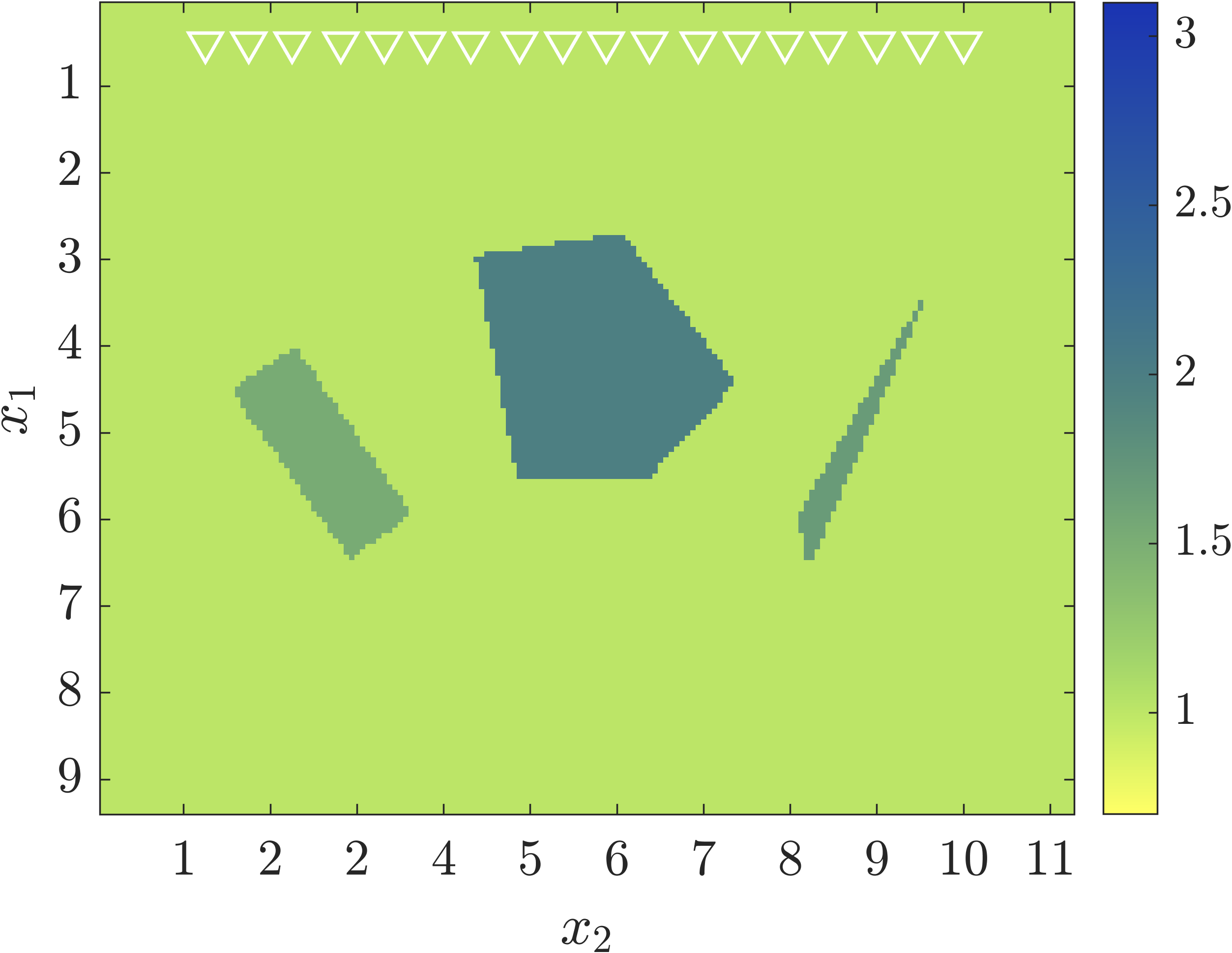}&\hspace{-0.05in} \includegraphics[width = 0.29\textwidth]{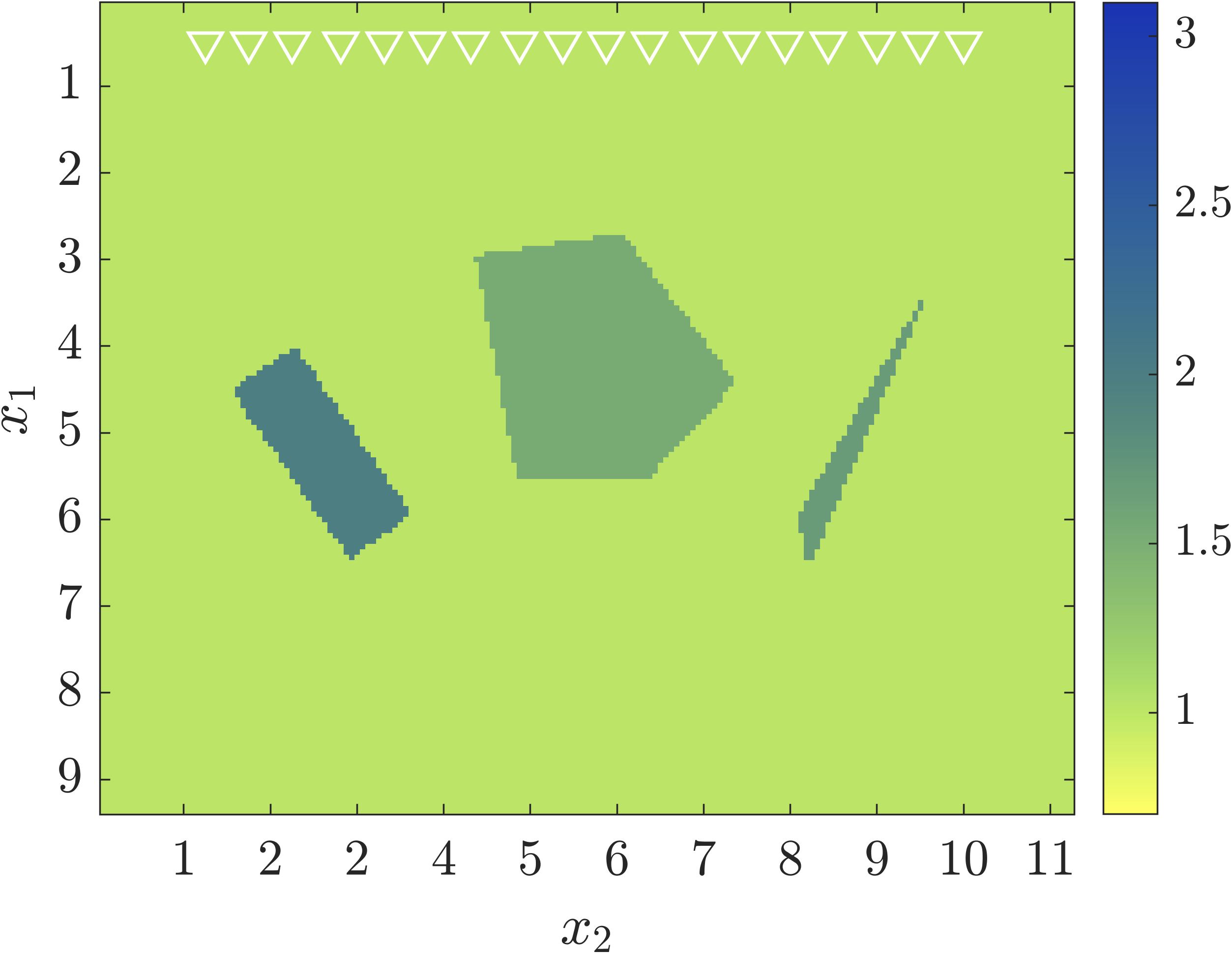} &\hspace{-0.05in}
\includegraphics[width = 0.3\textwidth]{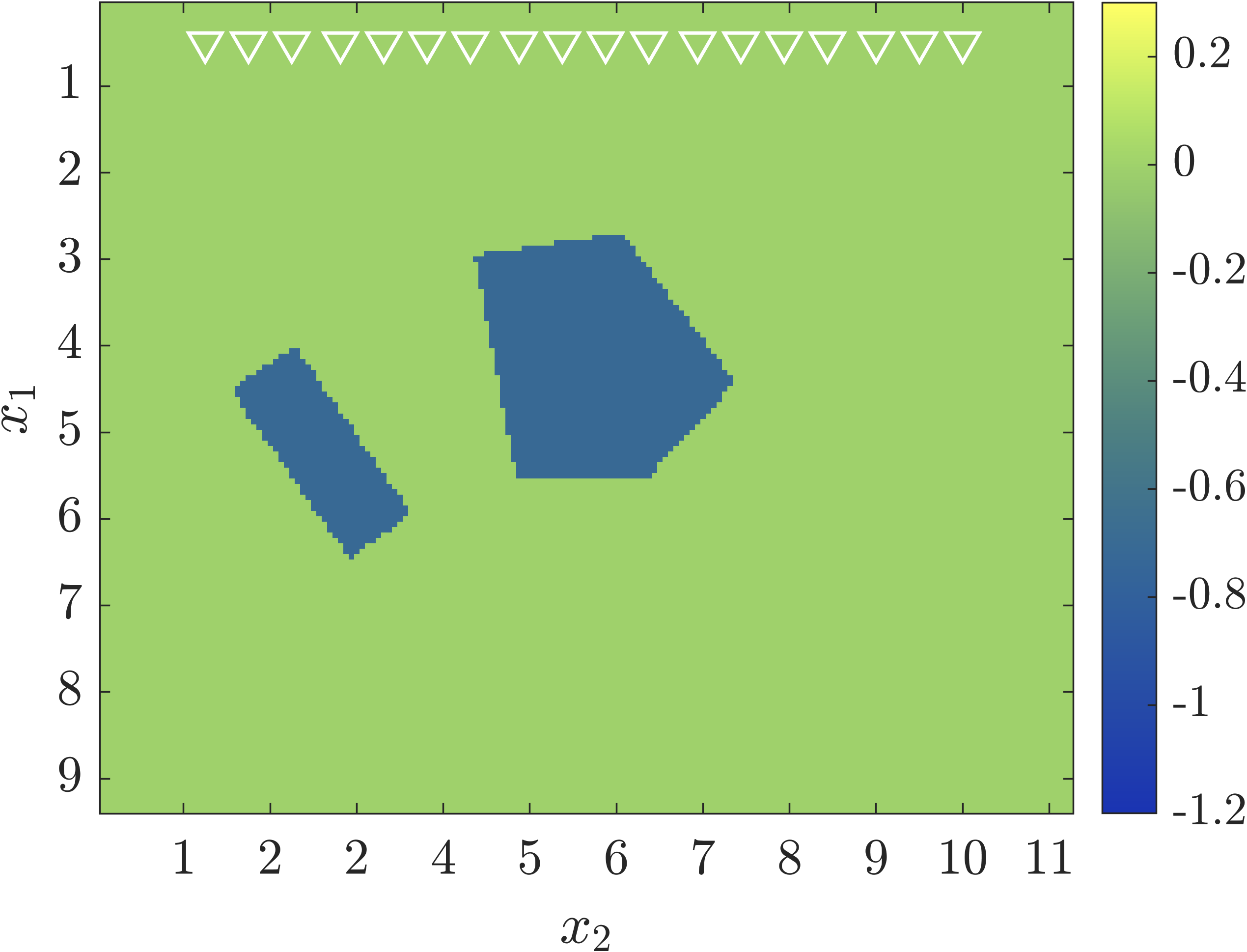}
\end{tabular}
\vspace{-0.1in}
\caption{The anisotropic medium with three inclusions. The axes are in units of $\la_c$. The contour of the inclusions is superposed on these plots.}
\label{fig:inv1}
\end{figure}

\begin{figure}[h]
\centering
\begin{tabular}{ccc}
$c_{1,1}/c_o$ & $c_{2,2}/c_o$  & $c_{1,2}/c_o$\\ 
\hspace{-0.05in}\includegraphics[width = 0.3\textwidth]{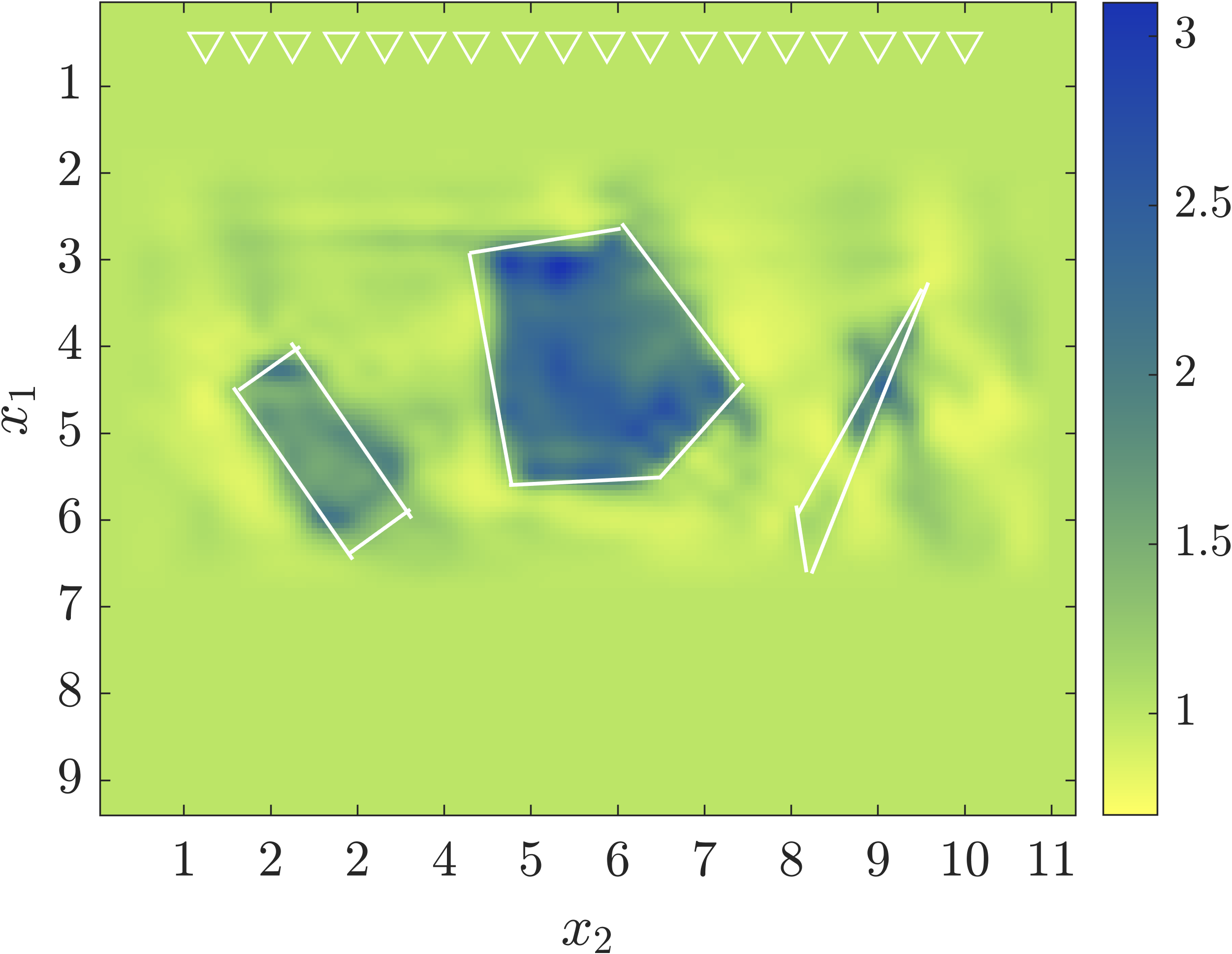}&\hspace{-0.05in} \includegraphics[width = 0.3\textwidth]{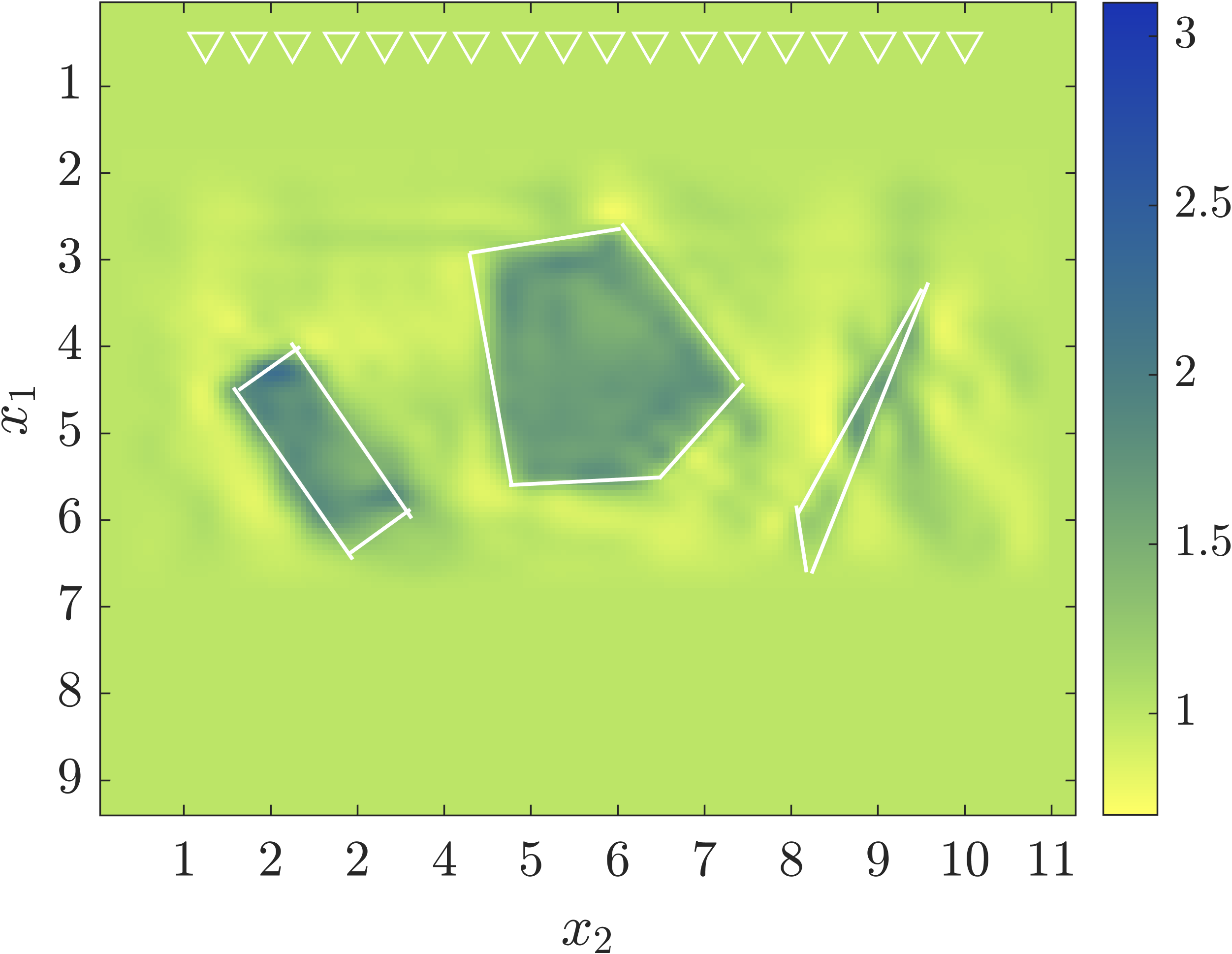} &\hspace{-0.05in}
\includegraphics[width = 0.3\textwidth]{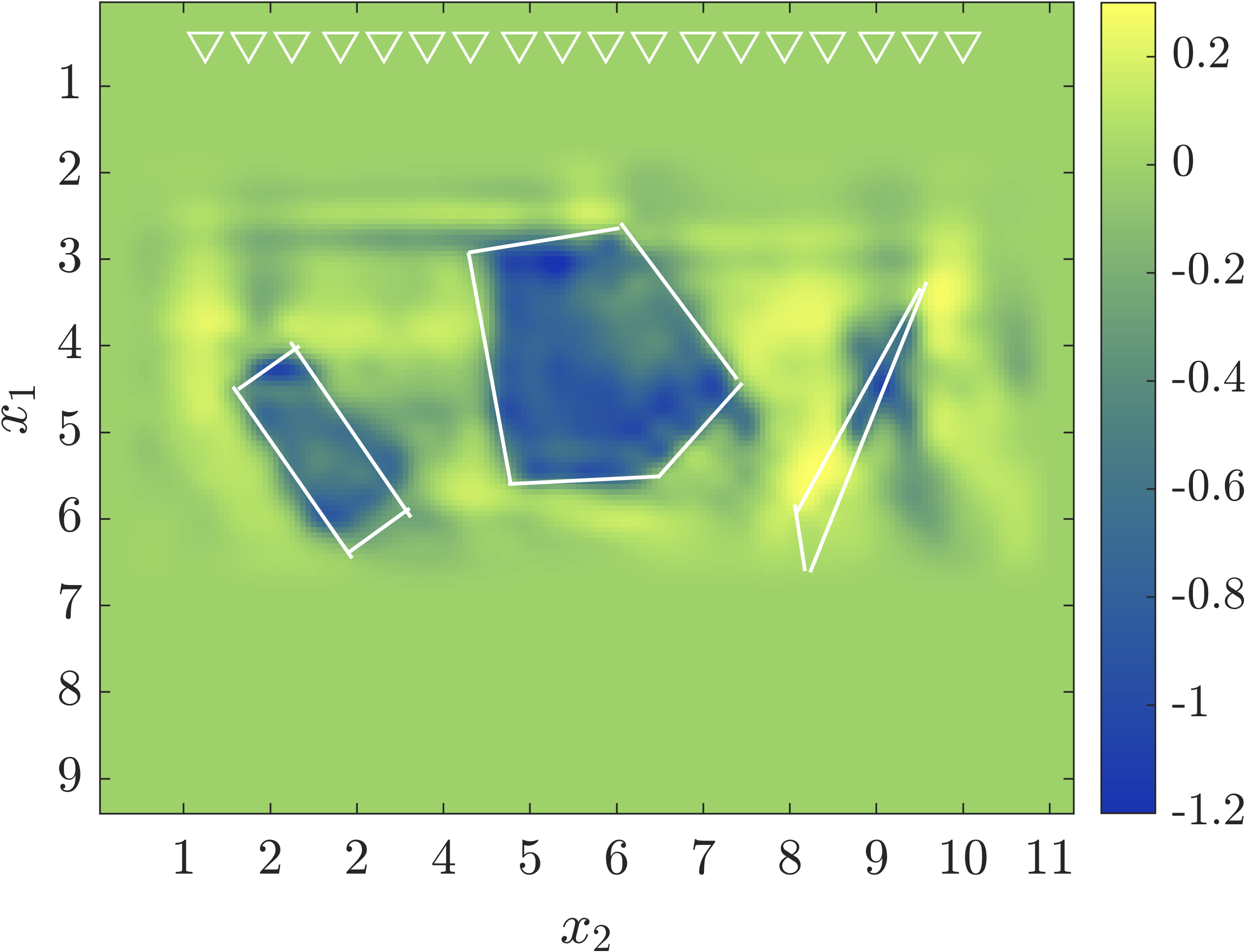} \\
\hspace{-0.05in}\includegraphics[width = 0.3\textwidth]{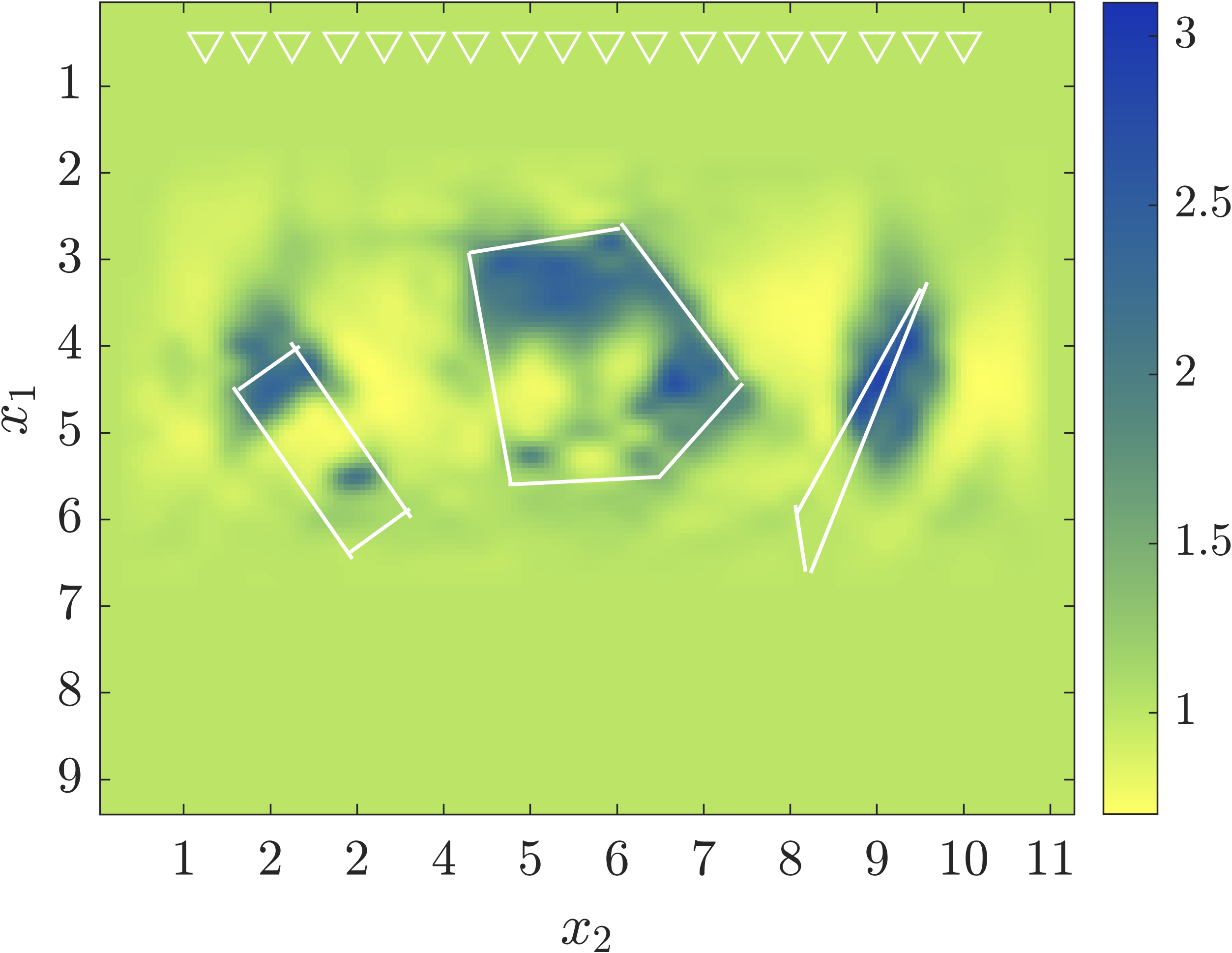}&\hspace{-0.05in} \includegraphics[width = 0.3\textwidth]{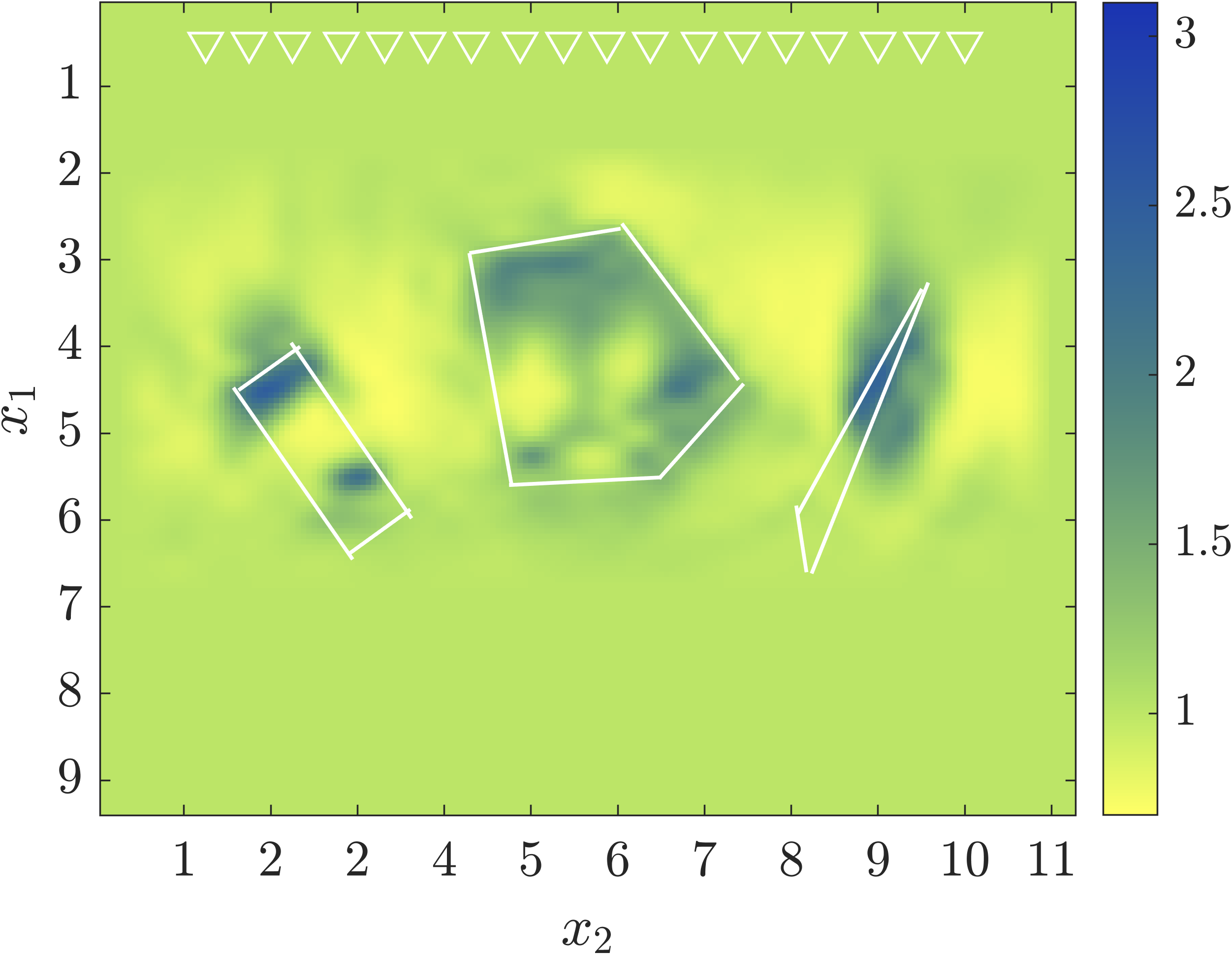} &\hspace{-0.05in}
\includegraphics[width = 0.3\textwidth]{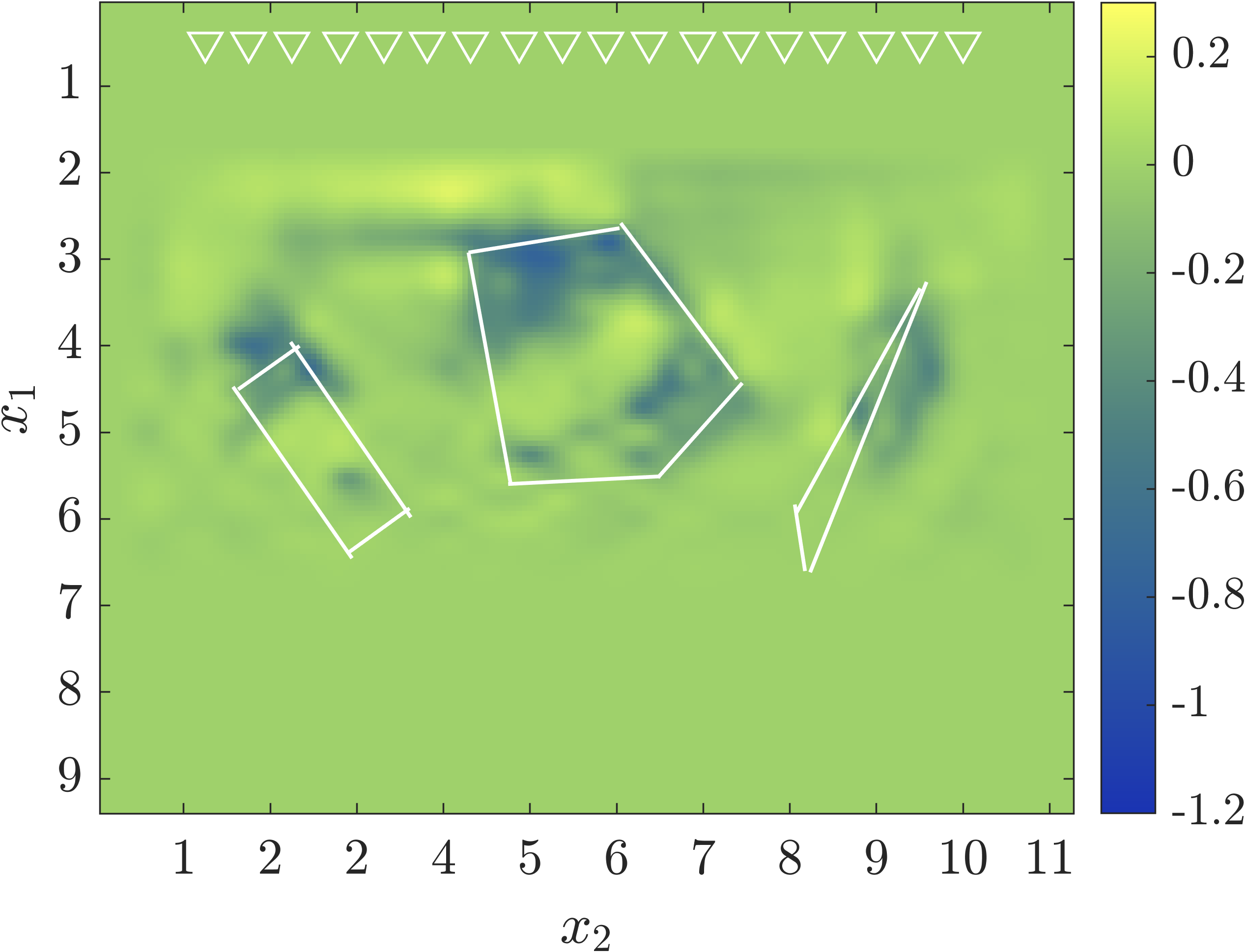}
\end{tabular}
\vspace{-0.1in}
\caption{The inversion results for the medium plotted in Fig.~\ref{fig:inv1}.  Top row: results obtained with Algorithm \ref{alg:AlgR}. Bottom row: Results given by the FWI approach. The axes are in units of $\la_c$. The contour of the inclusions is superposed on these plots.}
\label{fig:inv2}
\end{figure}

The inversion results are shown in Fig.~\ref{fig:inv2}. { The ROM approach (top plots) recovers well the shape of the inclusions, especially the 
left and middle ones.  The components of $\bc$ there are also well estimated.  There is some leakage  in the off-diagonal 
component of $\bc$ for the right, thin inclusion. This inclusion is very difficult to determine, because the data recorded at the array is dominated by the reflection at its top corner. 

As is typical of  FWI inversion \cite{virieux2009overview},  the results shown in the bottom plots of Fig.~\ref{fig:inv2} do not determine the shape of the inclusions and the wave speed there.  For instance, FWI sees the top and bottom of the leftmost inclusion, which cause two strong reflections registered at the arry.
However, the bottom is misplaced because the wave speed inside the inclusion is not correctly identified.  

Both methods tend to overshoot the value of the wave speed  near the edges of the inclusions. This is a normal Gibbs phenomenon seen  with Tikhonov regularization and it can be mitigated with a different regularization that is better suited for a piecewise constant $\bc$. }

\section{Imaging with the estimated kinematics}
\label{sect:CONJ}
In this section, we explain how one may combine the imaging and inversion methods described in sections~\ref{sect:normg} and~\ref{sect:invAlg}. This may be useful for distinguishing better the contour of inclusions buried in the medium, 
if the regularization of the inversion is not designed to do so.

We give the explanation using the example in Fig.~\ref{fig:Comb1}, 
where the medium contains a rectangular inclusion filled with an anisotropic medium. The inclusion is large enough to affect significantly the kinematics. Consequently, our imaging function~\eqref{eq:Jgpp}, displayed in the  left plot of 
Fig.~\ref{fig:Comb2}, gives the wrong estimate of the bottom right corner of the inclusion. The traditional image shown in 
the middle plot behaves similarly, although it shows a ghost feature at the bottom of the domain, due to multiple scattering.
It also has spurious oscillations near the top corner of the inclusion.

\begin{figure}[h]
\centering
\begin{tabular}{ccc}
$c_{1,1}/c_o$ & $c_{2,2}/c_o$  & $c_{1,2}/c_o$\\ 
\hspace{-0.05in}\includegraphics[width = 0.3\textwidth]{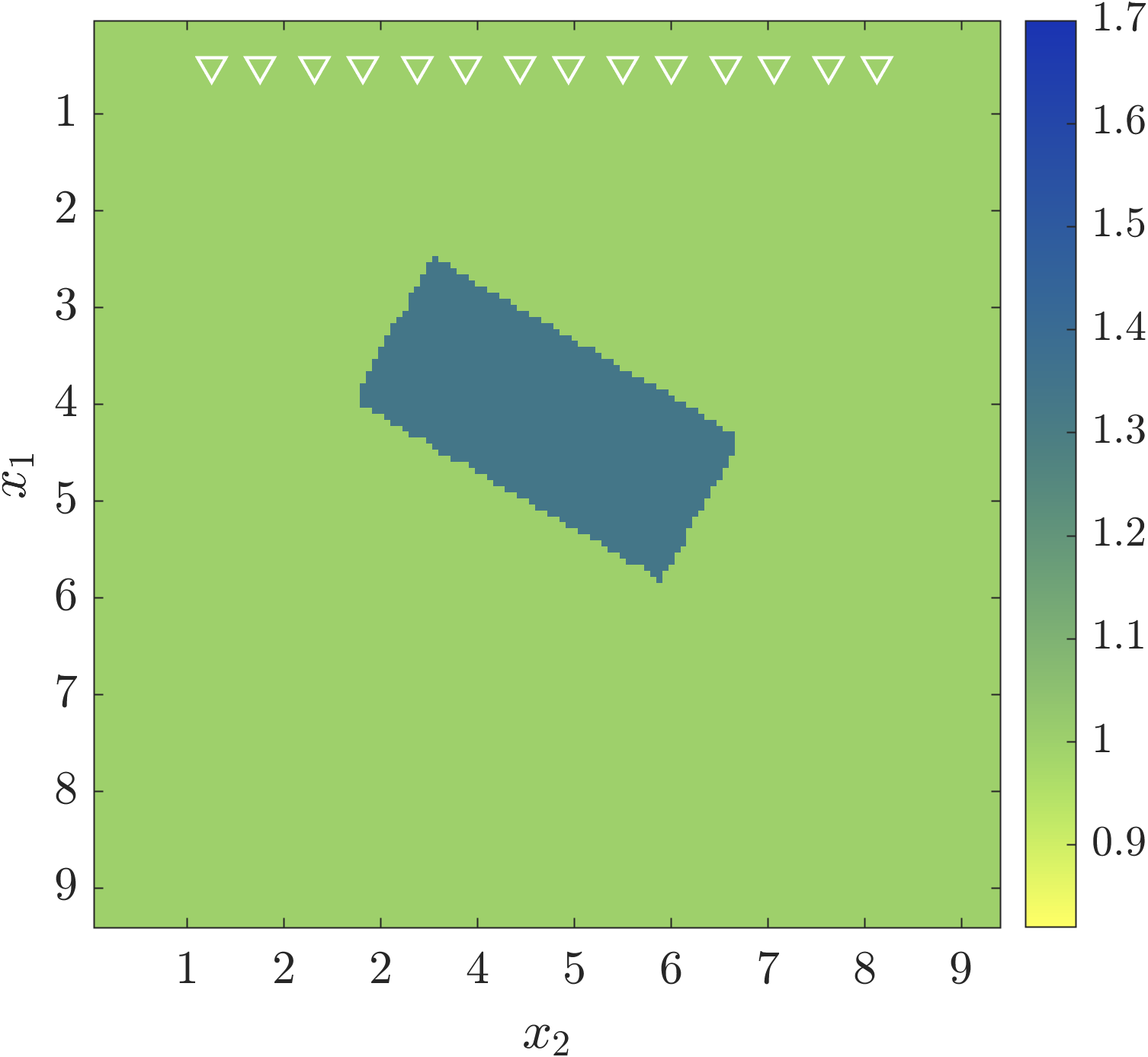}&\hspace{-0.05in} \includegraphics[width = 0.3\textwidth]{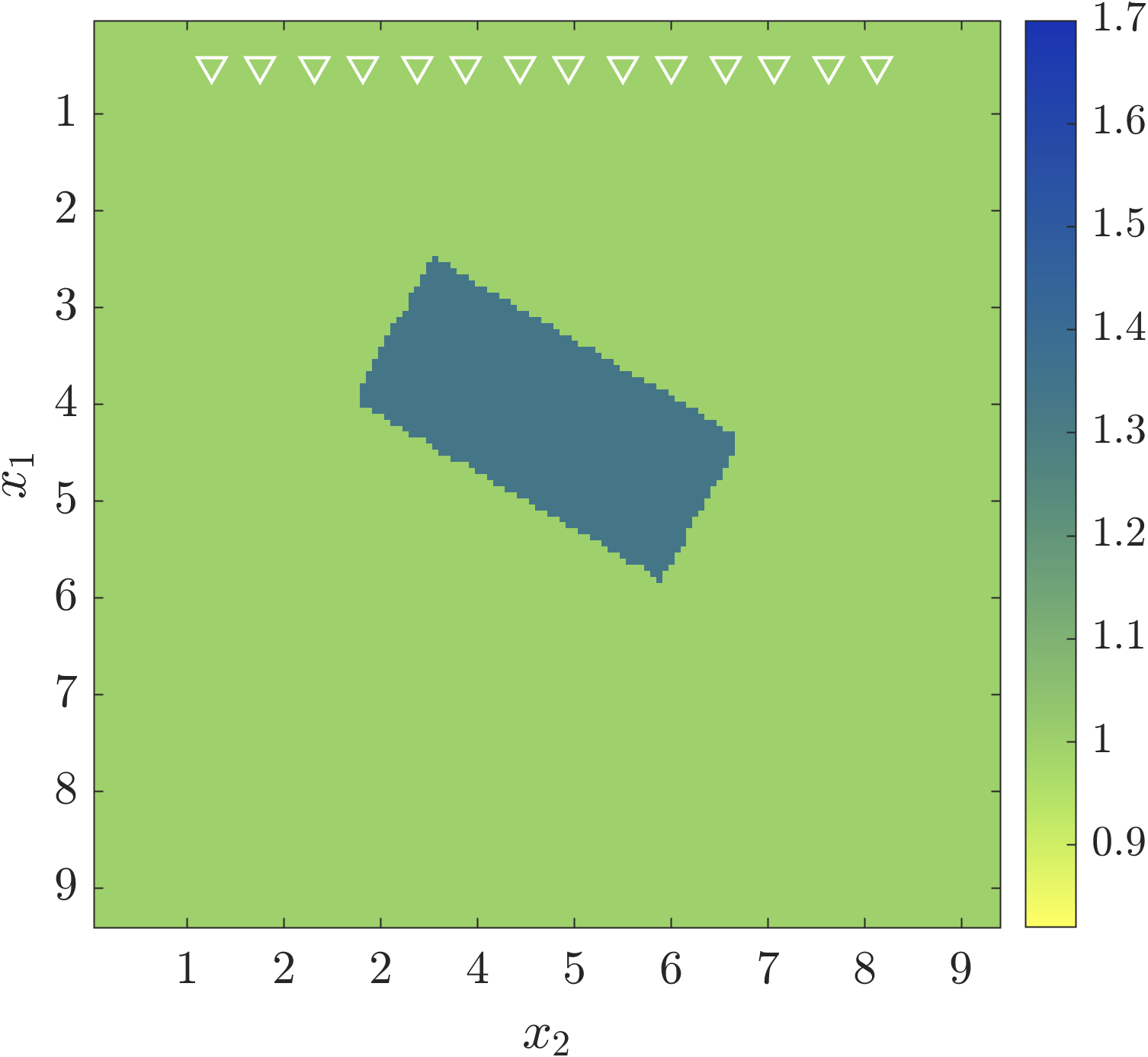} &\hspace{-0.05in}
\includegraphics[width = 0.3\textwidth]{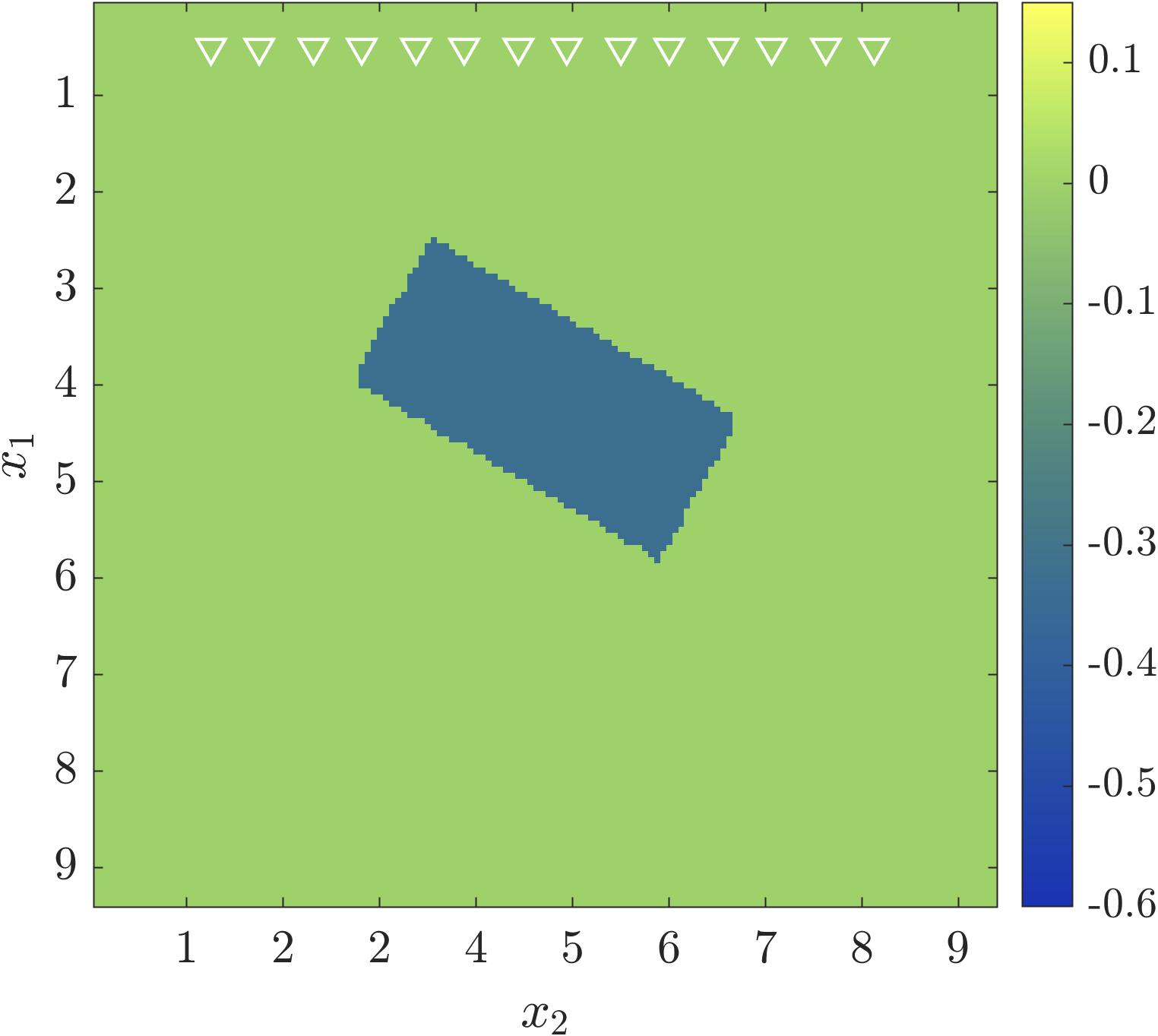}
\end{tabular}
\vspace{-0.1in}
\caption{Anisotropic medium with rectangle shaped inclusion. The axes are in units of $\la_c$. The contour of the inclusion is superposed on these plots.}
\label{fig:Comb1}
\end{figure}

\begin{figure}[h]
\centering
\includegraphics[width = 0.32\textwidth]{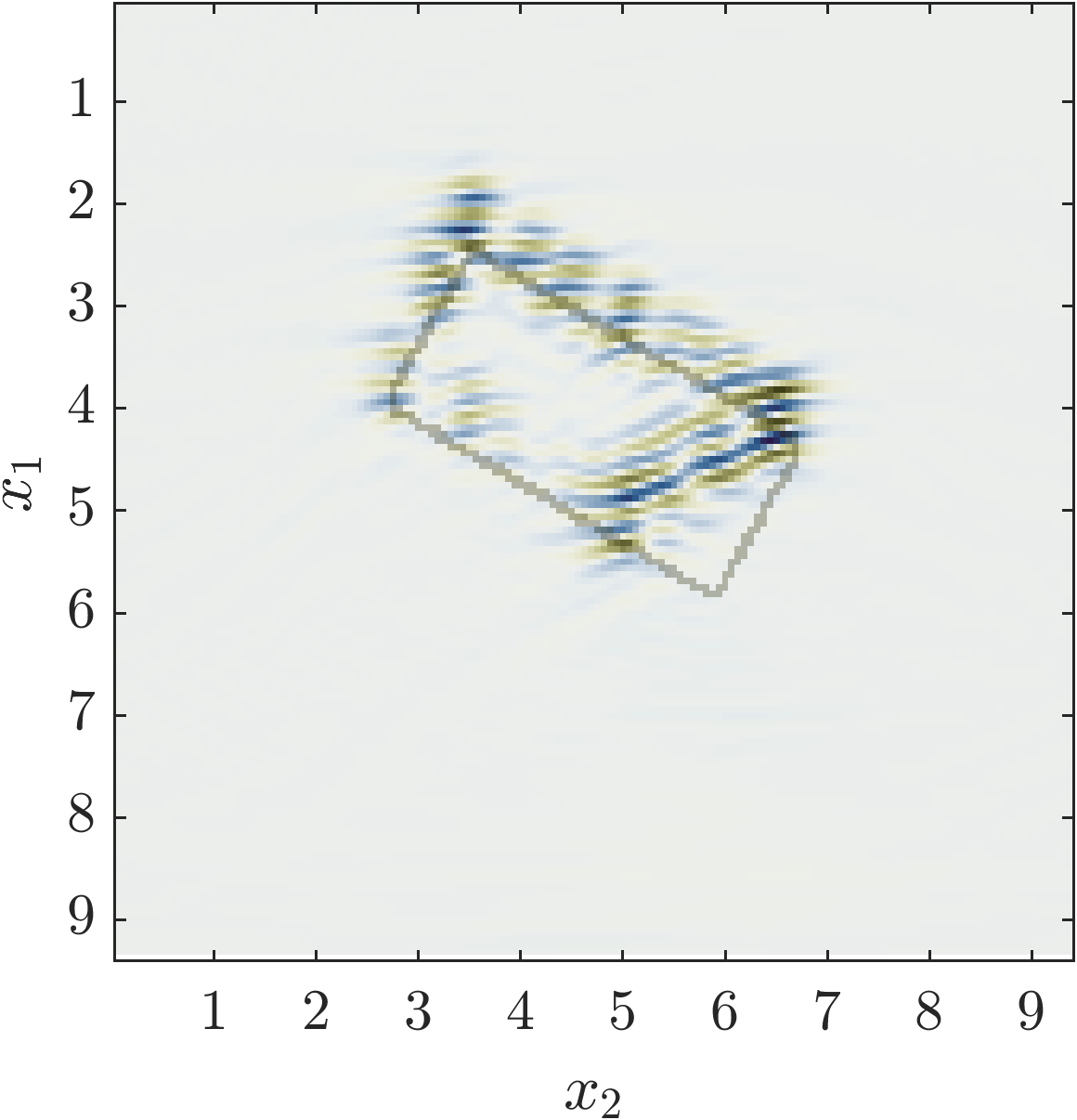}
 \includegraphics[width = 0.31\textwidth]{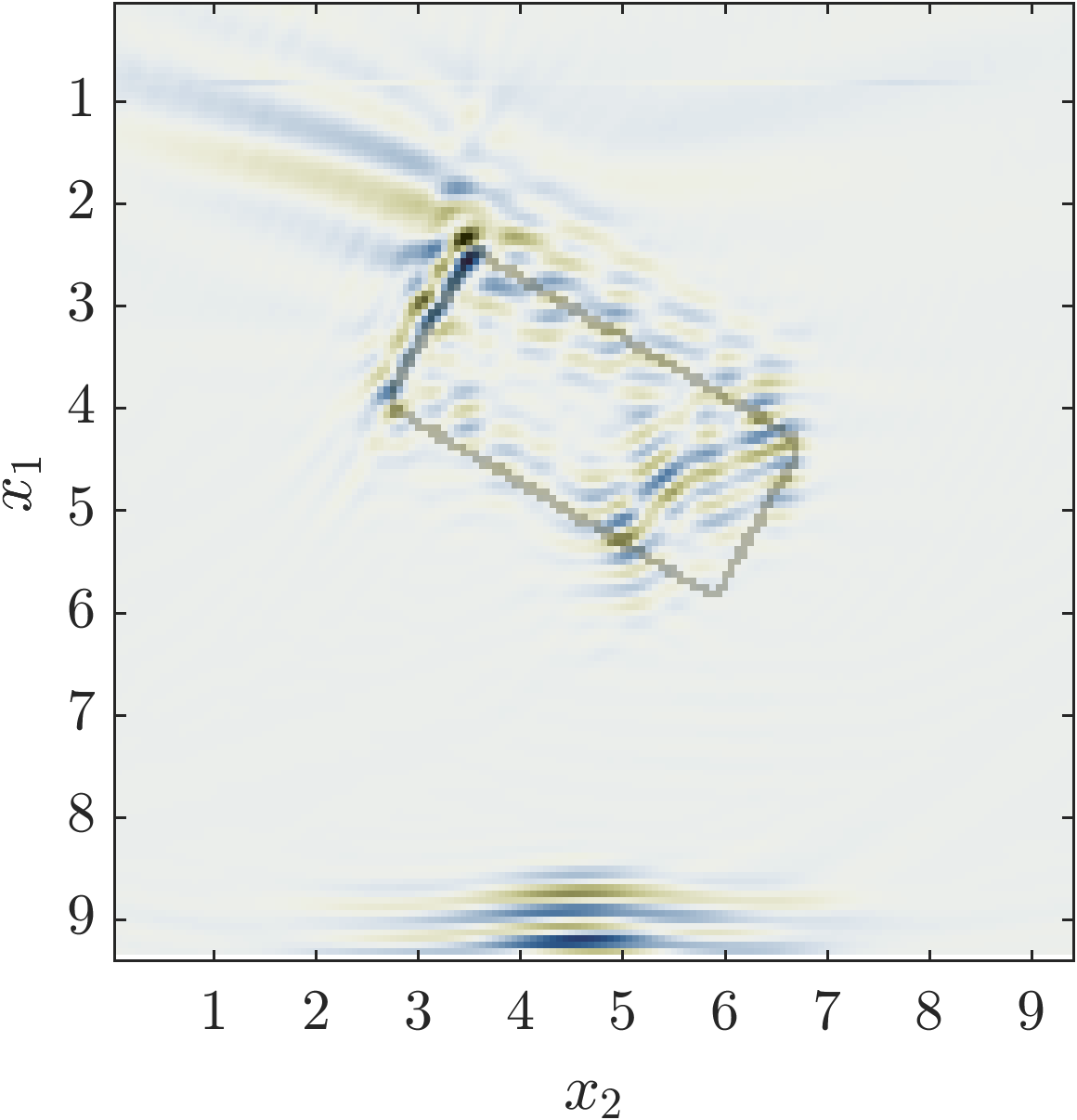}
  \includegraphics[width = 0.31\textwidth]{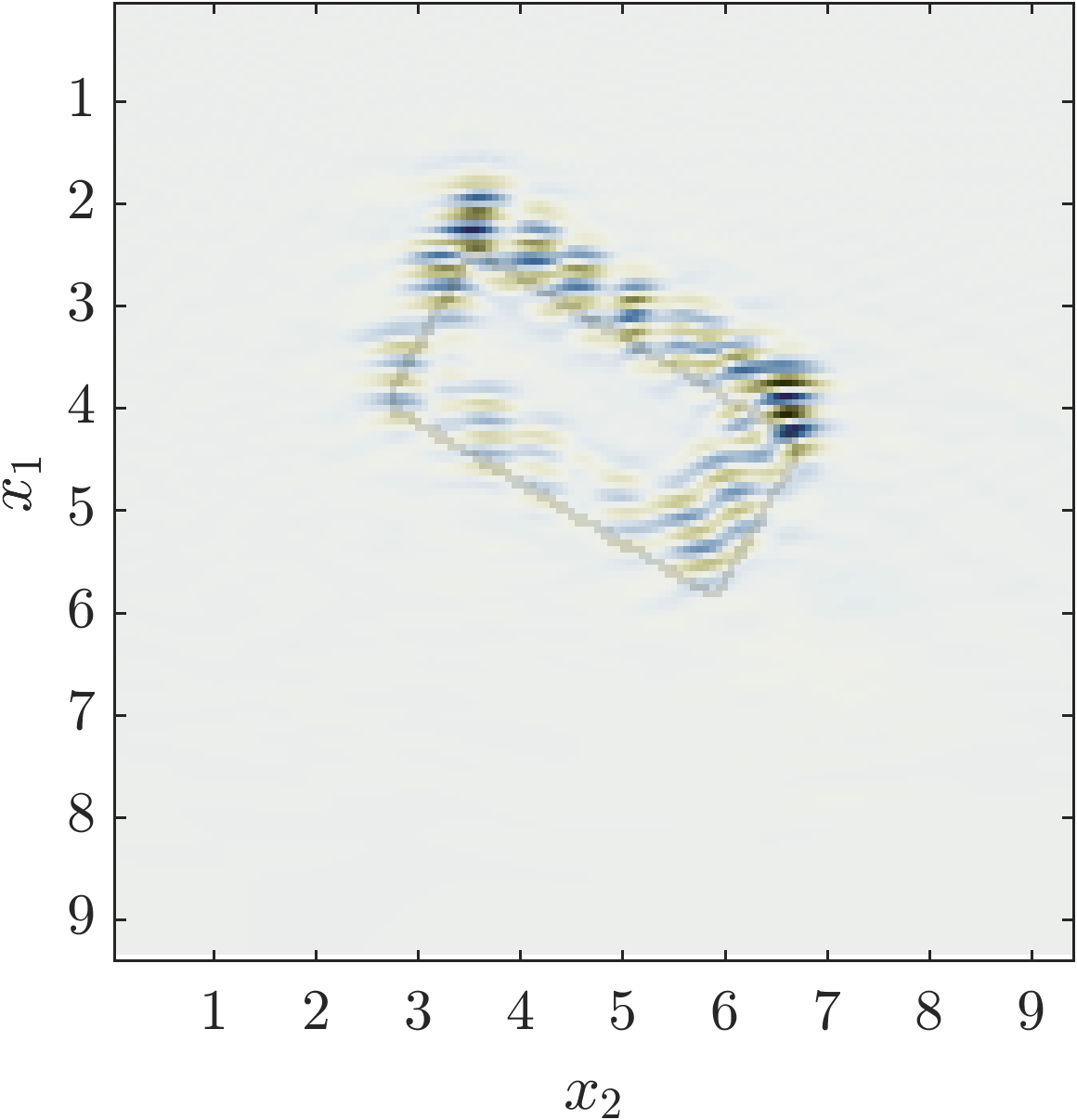}
\vspace{-0.1in}
\caption{First two plots: images of the medium in Fig.~\ref{fig:Comb1}, calculated using the constant reference speed $c_o \tensor{\bI}$. Left plot: the range derivative of the imaging function 
$\cI^{(2,2)}$. Middle plot the  traditional imaging function. 
The right plot:  the imaging function 
$\cI^{(2,2)}$ computed in the medium estimated by our inversion approach. 
The axes are in units of $\la_c$. The contour of the inclusion is superposed on these plots.}
\label{fig:Comb2}
\end{figure}

To correct the image in Fig.~\ref{fig:Comb2}, we need a better estimate of the kinematics. One way to get such an estimate is to use the result of the inversion with Algorithm~\ref{alg:AlgR}. The result of this inversion is shown in the top row of plots in 
Fig.~\ref{fig:Comb3}. For comparison, we also show in the bottom row of plots the result of the traditional (FWI) inversion. 
As we saw before, FWI identifies the top of the inclusion, but it does not give a good approximation of the smooth part 
of the wave speed, which determines the kinematics. 

\begin{figure}[h]
\centering
\begin{tabular}{ccc}
$c_{1,1}/c_o$ & $c_{2,2}/c_o$  & $c_{1,2}/c_o$\\ 
\hspace{-0.05in}\includegraphics[width = 0.3\textwidth]{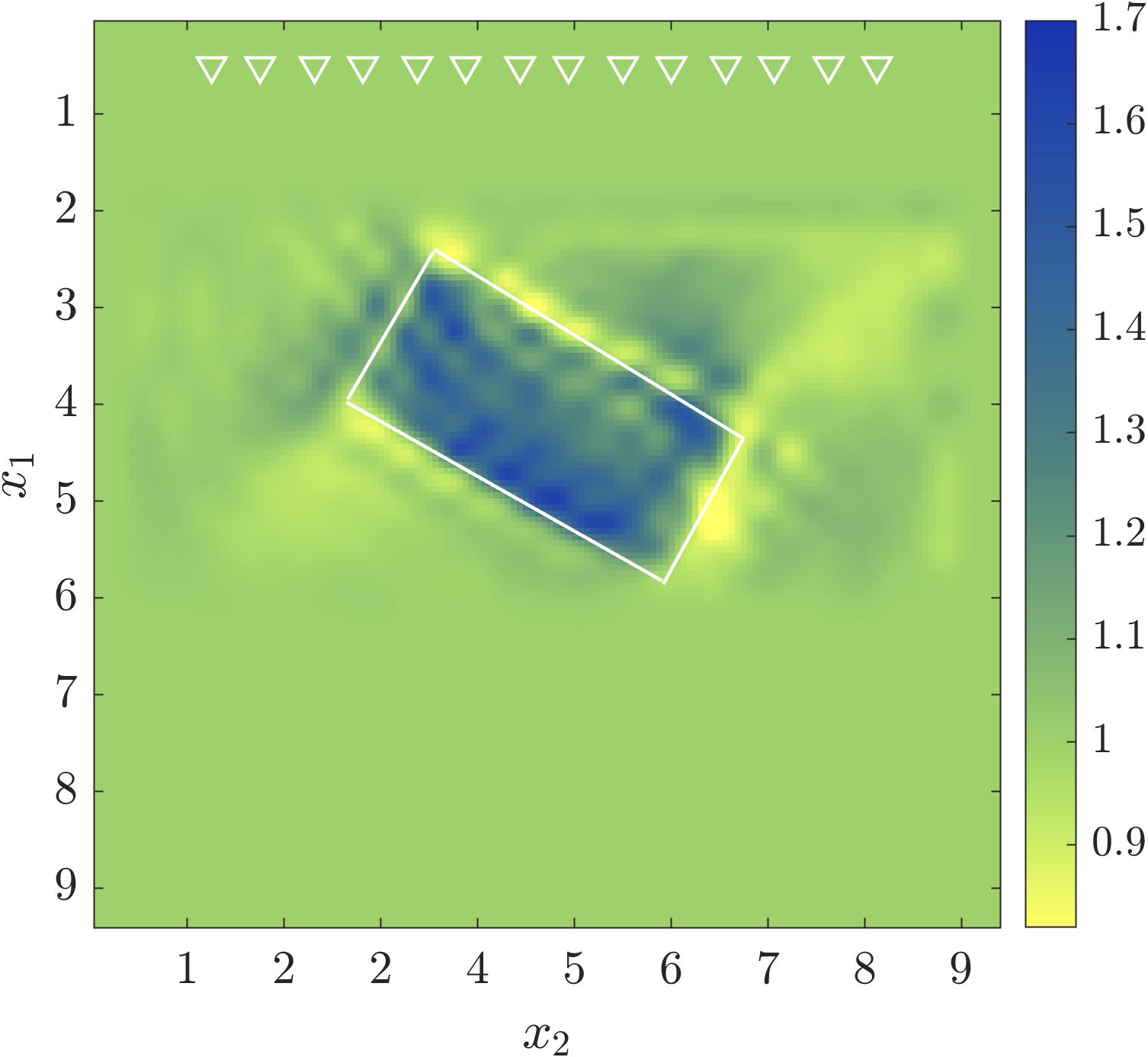}&\hspace{-0.05in} \includegraphics[width = 0.3\textwidth]{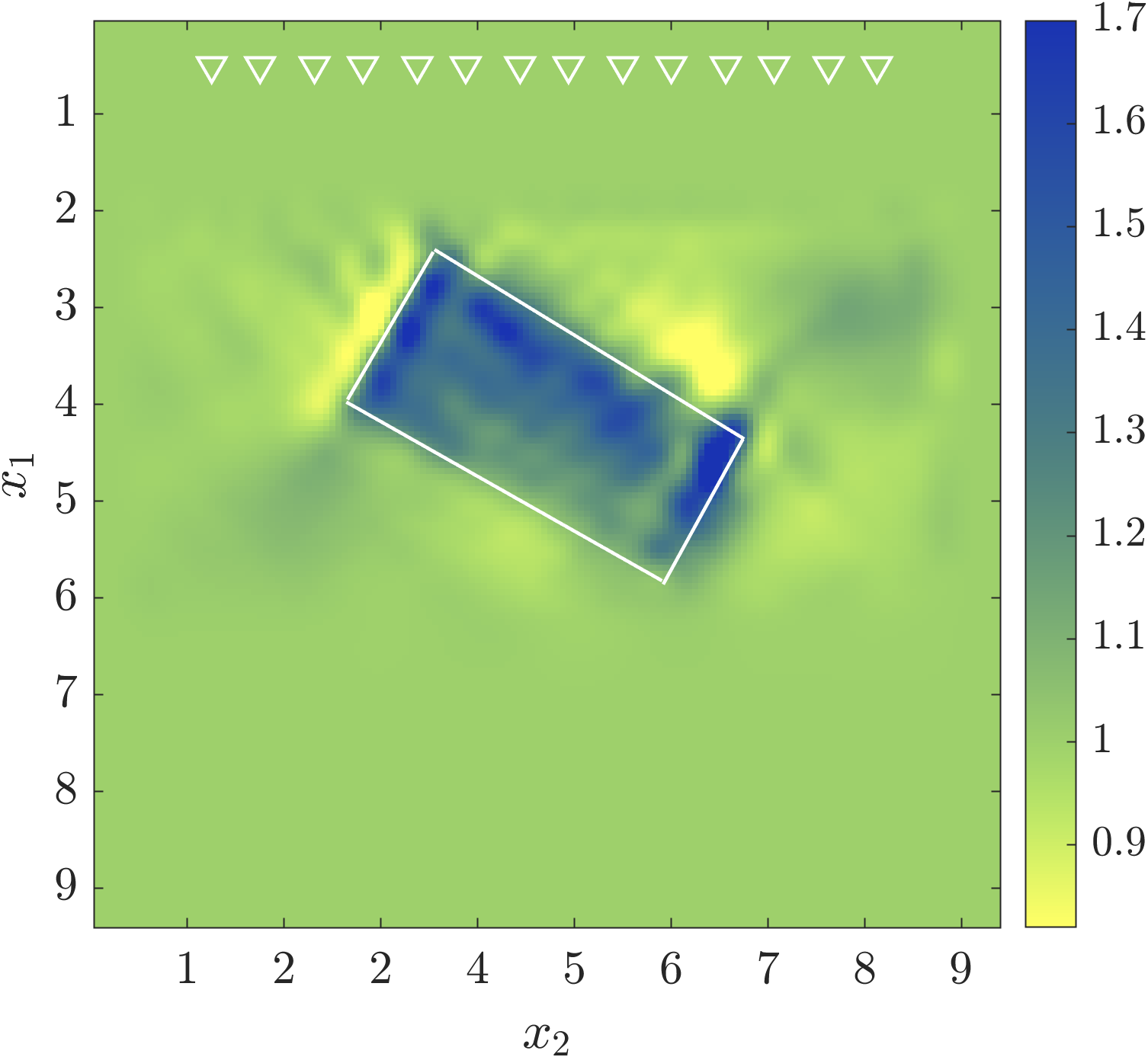} &\hspace{-0.05in}
\includegraphics[width = 0.3\textwidth]{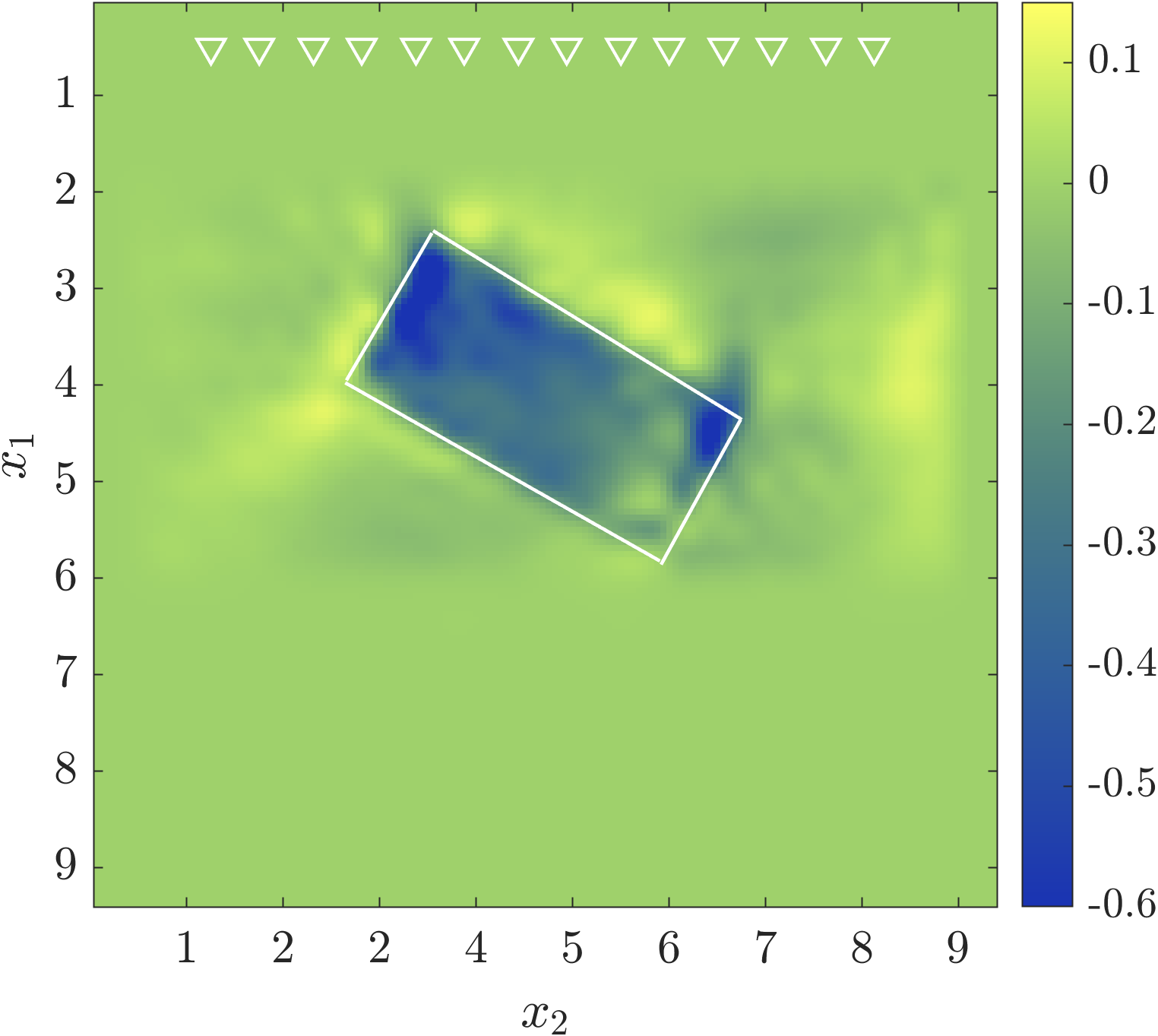} \\
\hspace{-0.05in}\includegraphics[width = 0.3\textwidth]{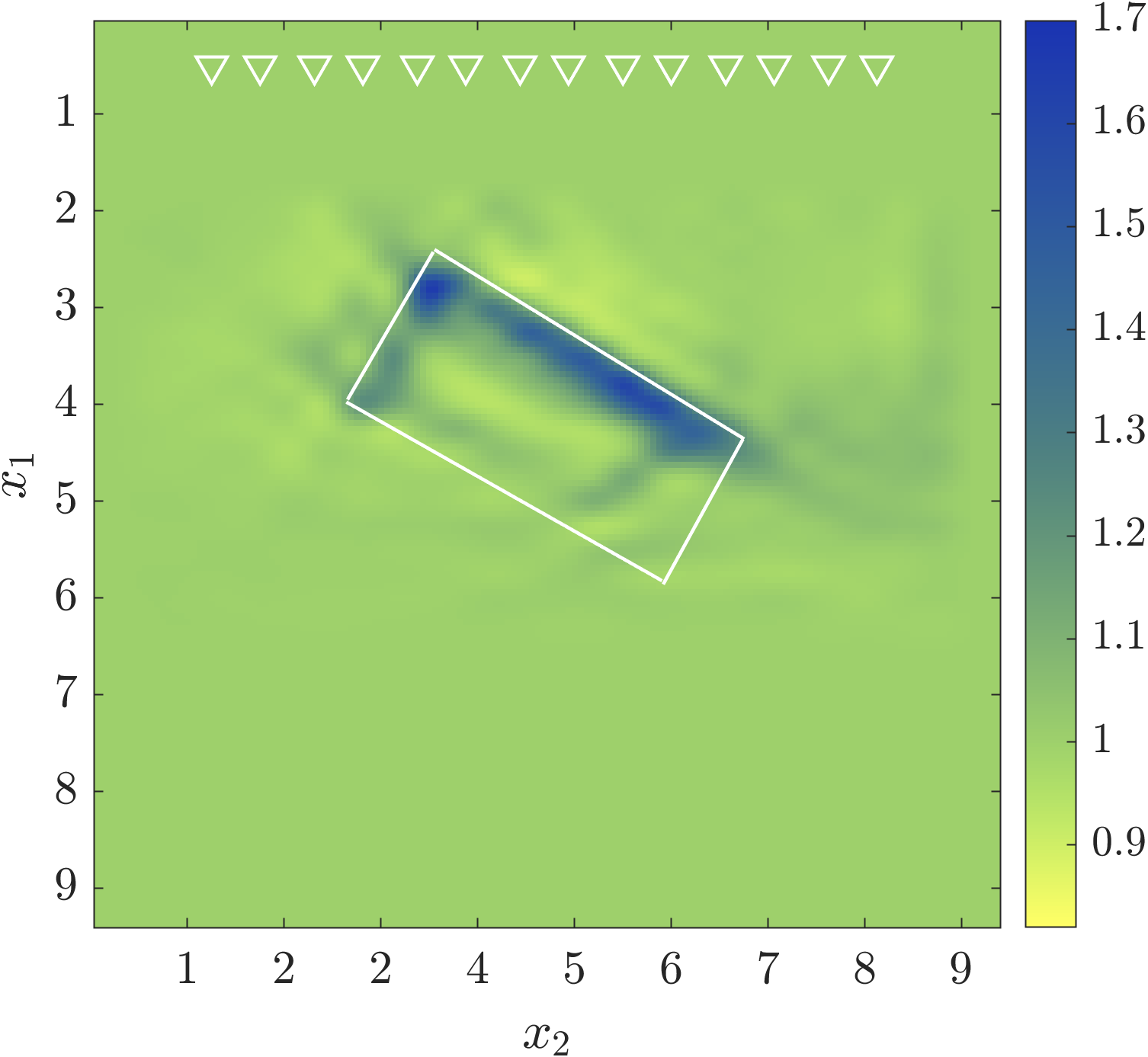}&\hspace{-0.05in} \includegraphics[width = 0.3\textwidth]{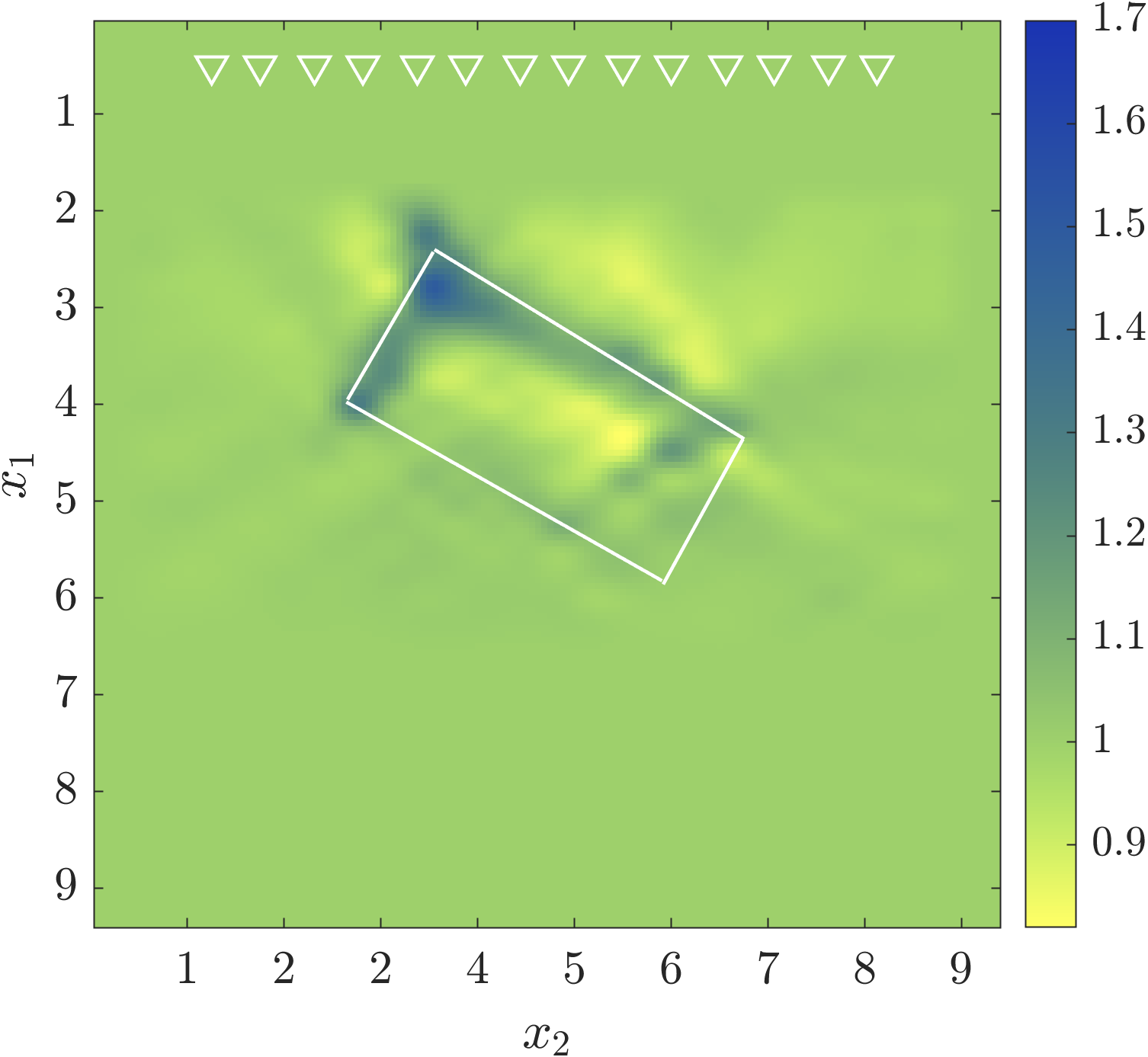} &\hspace{-0.05in}
\includegraphics[width = 0.3\textwidth]{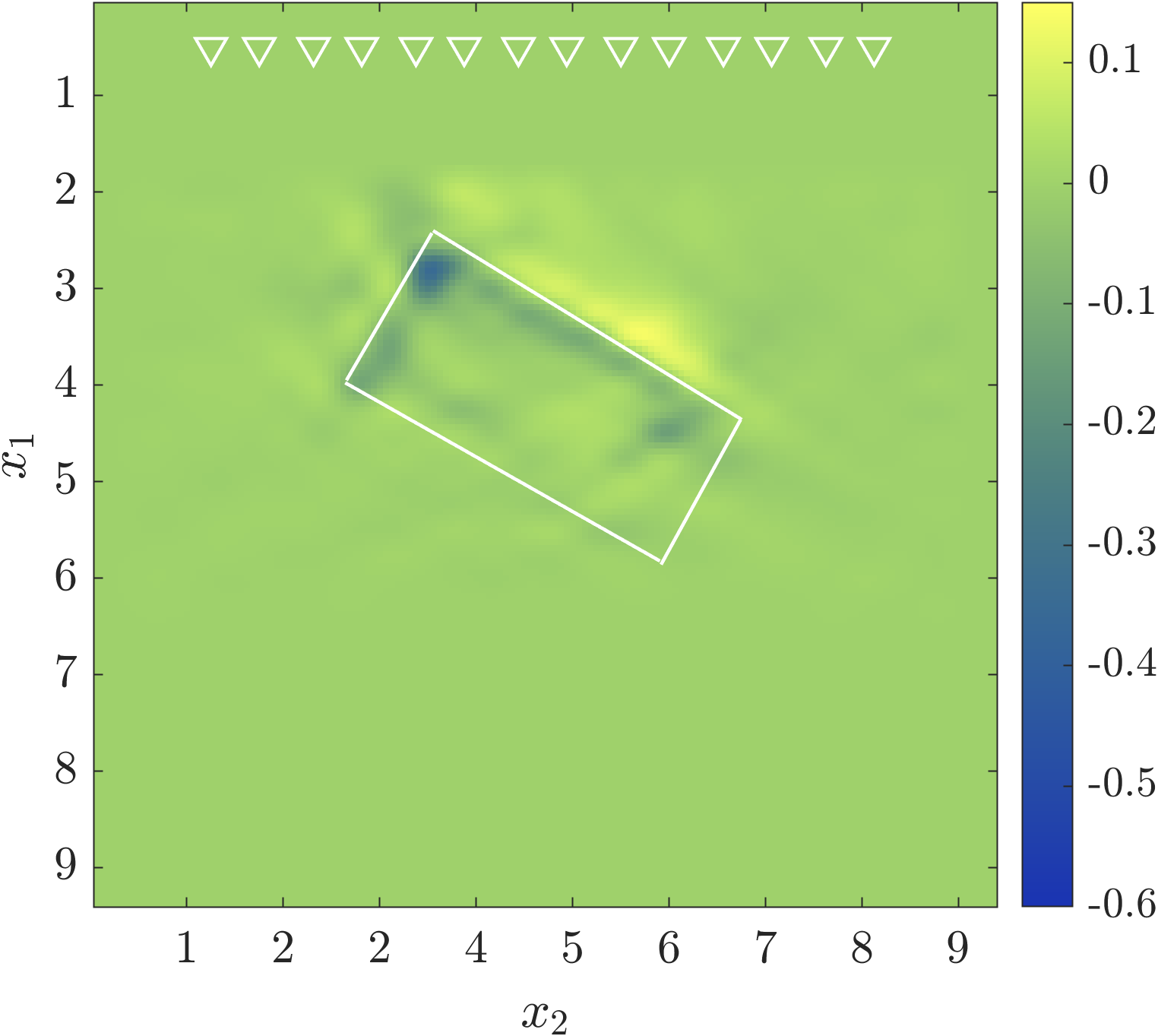}
\end{tabular}
\vspace{-0.1in}
\caption{The inversion results for the medium plotted in Fig.~\ref{fig:Comb1}.  Top row: results obtained with Algorithm \ref{alg:AlgR}. Bottom row: Results given by the FWI approach. The axes are in units of $\la_c$. The contour of the inclusion is superposed on these plots.}
\label{fig:Comb3}
\end{figure}

If we compute the imaging function~\eqref{eq:Jgpp} using the reference wave speed obtained by our inversion Algorithm~\ref{alg:AlgR} (top plots in Fig.~\ref{fig:Comb3}), we obtain the result shown in the right plot of Fig.~\ref{fig:Comb2}. This is an improvement because it identifies correctly the entire contour of the inclusion. It also complements the inversion results in Fig.~\ref{fig:Comb3}, which do 
not give a precise estimation of this contour and also display some small oscillations that are typical of the Tikhonov regularization procedure.
\section{Summary}
\label{sect:SUM}
We introduced a novel approach for inverse scattering with electromagnetic waves in lossless, anisotropic heterogeneous media. 
It  is an extension of  results obtained recently for the acoustic wave equation in lossless,  isotropic media. The approach uses a reduced order model (ROM)
which is an algebraic dynamical system that captures the propagation of the electric wave field inside the inaccessible medium, on a uniform time grid with appropriately chosen time step. The ROM is data driven, meaning that it can be computed just from measurements 
gathered by an active array of antennas that emit probing signals and measure the generated electric field. 
 We showed how to use the ROM to obtain an estimate of the electric field inside the medium, called the internal wave. This wave is consistent by construction with the data measured at the antennas. We use it to formulate a novel imaging method, designed to locate the rough part of the medium, the reflectivity, in a known smooth background. The imaging function is easy and inexpensive to compute and  it 
is superior to the traditional imaging approach because it does not display multiple scattering artifacts.  
We also introduced a novel inversion algorithm that uses the estimated internal wave to determine the matrix valued wave speed.
Both methods are assessed with numerical simulations and are shown to perform better than the traditional approaches. 

\section* {Acknowledgements}  This material is based upon research supported in part by the  AFOSR award number FA9550-22-1-0077. We thank Vladimir Druskin, Josselin Garnier, Alexander Mamonov  and Mikhail Zaslavsky for the feedback and their work on data driven ROM imaging and inversion with acoustic waves.
 \appendix 
 \section{Proof of Lemma \ref{lem.dyads}}
 \label{ap:A}
Take the time derivative in~\eqref{eq:GF3} and introduce the new $2\times 2$ field 
\begin{equation}
\tensor{\bH}(t,\bx,\by) = \bc^{-1}(\bx) \partial_t \bG(t,\bx,\by),
\end{equation}
which satisfies
\begin{align*}
-\nabla^\perp \big[ \nabla^\perp \cdot ( \bc(\bx) \underline{\underline{\bH}}(t,\bx,\by) )\big]+ \bc^{-1}(\bx) \partial_t^2 \underline{\underline{\bH}}(t,\bx,\by) &= \delta'(t) \underline{\underline{\bI}}  \delta(\bx-\by), ~ t \in \RR,~\bx \in \Omega,\\
\underline{\underline{\bH}}(t,\bx,\by)  &\equiv {\bf 0}, \quad t < 0,~~ \bx \in \Omega, \\
\bn^\perp(\bx) \cdot \underline{\underline{\bH}}(t,\bx,\by)  & =0, \quad t \in \RR, ~~ \bx \in \partial \Omega.
\end{align*}
Multiply this equation on the left by $\bc$ and recall the definition~\eqref{eq:u4} of the operator $A$. The equation
for $\tensor{\bH}$ becomes
\begin{align*}
A \underline{\underline{\bH}}(t,\bx,\by)+ \partial_t^2 \underline{\underline{\bH}}(t,\bx,\by) &= \delta'(t) \bc(\by) \delta(\bx-\by), \quad t \in \RR, ~~ \bx \in \Omega.
\end{align*}
It remains to show that
\begin{equation}
\underline{\underline{\bH}}(t,\bx,\by) = \bcG(t,\bx,\by) \bc(\by).
\label{eq:checkH}
\end{equation}

We check columnwise: For $p = 1,2$ we have 
\begin{align*}
(A + \partial_t^2)\left[ \bcG(t,\bx,\by) \bc(\by) \be_p \right]  = & \sum_{j=1}^2
c_{j,p}(\by)(A + \partial_t^2)\left[ \bcG(t,\bx,\by) \be_j \right] ,
\end{align*}
and using equation~\eqref{eq:GF1} we get
\begin{align*}
(A + \partial_t^2 )\left[ \bcG(t,\bx,\by)\bc(\by) \be_p \right] = &
\sum_{j=1}^2 c_{j,p}(\by) \delta'(t) \be_j \delta(\bx-\by) =
\delta'(t) \bc(\by) \be_p \delta(\bx-\by).
\end{align*}
This shows that $\bcG(t,\bx,\by) \bc(\by)$ solves the same equation as $\underline{\underline{\bH}}(t,\bx,\by)$. They  both vanish 
 at $t < 0$ and 
they satisfy the same homogeneous boundary condition, due to the assumption that $\bc = c_o \underline{\underline{\bI}}$ at $\partial \Omega$. Since the wave equation has a unique solution, the result~\eqref{eq:checkH} holds. $\Box$

\section{Proof of Lemma \ref{lem.2}}
 \label{ap:B}
Introduce the $2m \times 2$ auxiliary  fields
\begin{equation}
\bsig_l(\by) = \bi_l^T \bR^{-1} \bV^T(\by;\tilde \bc), \qquad l = 0, \ldots, n-1,
\label{eq:B0}
\end{equation}
and observe that they satisfy
\begin{equation*}
\sum_{l=0}^{n-1} \bi_l \bsig_l(\by) = \underbrace{\sum_{l=0}^{n-1} \bi_l \bi_l^T}_{\bI_{2nm}} \bR^{-1} \bV^T(\by;\tilde \bc)
= \bR^{-1} \bV^T(\by;\tilde \bc).
\end{equation*}
We deduce that 
\begin{align}
\sum_{l=0}^{n-1} \bu_l(\bx) \bsig_l(\by) &= \bcU(\bx) \sum_{l=0}^{n-1} \bi_l  \bsig_l(\by) 
= \bcU(\bx) \bR^{-1} \bV^T(\by;\bc)  \nonumber\\
&\stackrel{\eqref{eq:defV4}}{=} \bV(\bx) \bV^T(\by;\tilde \bc) \stackrel{\eqref{eq:deltaRangeEst}}{=} \bde^{{\rm est}}(\bx,\by;\tilde \bc). \label{eq:B1}
\end{align}
Therefore,
\begin{align}
\cos(t_j \sqrt{\cA}\, )  \bde^{{\rm est}}(\bx,\by;\tilde \bc) &\stackrel{\eqref{eq:B1}}{=} \cos(t_j \sqrt{\cA}\, ) \sum_{l=0}^{n-1} \bu_l(\bx) \bsig_l(\by) \nonumber\\
&\stackrel{\eqref{eq:Sn3}}{=} \cos(t_j \sqrt{\cA}\, ) \sum_{l=0}^{n-1} \cos(t_l \sqrt{\cA}\, ) \bu_0(\bx)
\bsig_l(\by) \nonumber \\
&= \frac{1}{2} \sum_{l=0}^{n-1}\Big[ \cos(t_{j+l}  \sqrt{\cA}\, ) + \cos(t_{|j-l|}  \sqrt{\cA}\, )\Big] \bu_0(\bx)
\bsig_l(\by) \nonumber \\
&\stackrel{\eqref{eq:Sn3}}{=} \frac{1}{2} \sum_{l=0}^{n-1} \Big[ \bu_{j+l}(\bx) + \bu_{|j-l|}(\bx) \Big] \bsig_l(\by).
\label{eq:B2}
\end{align}
With this result and recalling the expression~\eqref{eq:Dinn} of the entries of the data matrices, 
\begin{align*}
\int_{\Omega} d \bx \, \bu_0^T(\bx)  \cos(t_j \sqrt{\cA}\, )  \bde^{{\rm est}}(\bx,\by;\tilde \bc) &= \frac{1}{2} \sum_{l=0}^{n-1} \Big[ \mathbb{D}_{j+l} + \mathbb{D}_{|j-l|} \Big] \bsig_l(\by) \\
&\stackrel{\eqref{eq:calcGram}}{=} \sum_{l=0}^{n-1} \mathbb{M}_{j,l} \bsig_l(\by),
\end{align*}
which is a $2m \times 2$  field. Moreover, using the definition~\eqref{eq:B0} of $\bsig_l$ and that
\[
\sum_{l=0}^{n-1} \mathbb{M}_{j,l} \bi_l^T = \sum_{l=0}^{n-1} \underbrace{\bi_j^T \mathbb{M} \bi_l}_{\bM_{j,l}} \bi_l^T = \bi_j^T \mathbb{M},
\]
we obtain that 
\begin{align}
\int_{\Omega} d \bx \, \bu_0^T(\bx)  \cos(t_j \sqrt{\cA}\, )  \bde^{{\rm est}}(\bx,\by;\tilde \bc) 
= \bi_j^T  \mathbb{M} \bR^{-1} \bV^T(\by;\tilde \bc) \nonumber \\
\stackrel{\eqref{eq:Chol}}{=} \bi_j^T \bR^T \bV^T(\by;\tilde \bc) \stackrel{\eqref{eq:EstWave}}{=} \Big[\bu_j^{\rm est}(\by;\tilde \bc) \Big]^T. \label{eq:PFeq}
\end{align}

Now, recall definition~\eqref{eq:DM3} of $\bu_0$, which says
that its $(s,p)$ column is
\[
\bu_0^{(s,p)}(\bx) = \left|\hat f (\sqrt{\cA}\, )\right| \be_p F^{(s)}(\bx).
\]
Using this in~\eqref{eq:PFeq} and recalling that $\cA$ is symmetric and functions of $\cA$ commute, we get that the $(s,p)$ row in the left hand side of~\eqref{eq:PFeq} is
\begin{align}
\hspace{-0.1in} \int_{\Omega} d \bx \, \be_p^T F^{(s)}(\bx)  \cos(t_j \sqrt{\cA}\, ) \left|\hat f (\sqrt{\cA}\, )\right| \bde^{{\rm est}}(\bx,\by;\tilde \bc) &\stackrel{\eqref{eq:estg1}}{=} \int_{\Omega} d \bx \, \be_p^T F^{(s)}(\bx)  \tensor{\bg}^{\rm est} (t_j,\bx,\by;\tilde \bc) \nonumber \\
&~~\approx \be_p^T  \tensor{\bg}^{\rm est} (t_j,\bx_s,\by;\tilde \bc). \label{eq:B5}
\end{align}
Here the approximation is due to the assumption of point-like support of the antenna, modeled by $F^{(s)}$, 
and  also that functions in the range of $|\hat f ( \sqrt{\cA}\, )|$ are smooth.  

The result~\eqref{eq:estg} follows from~\eqref{eq:PFeq}-\eqref{eq:B5}. The proof of~\eqref{eq:trueg} is the same, once we replace $\bV(\cdot ;\tilde \bc)$ by the true $\bV$. $ ~~ \Box$

\section{Proof of Proposition \ref{prop.1}}
 \label{ap:C}
Using the jump conditions induced by the derivative of the Dirac delta in~\eqref{eq:GF1}, we can write equations~\eqref{eq:GF1}--\eqref{eq:GF2} in the equivalent form  
\begin{align*}
(A +\partial_t^2) \bcG(t,\bx,\by) = \tensor{\bf 0}, \qquad t > 0, ~~ \bx \in \Omega, \\
\bcG(0+,\bx,\by) = \underline{\underline{\bI}} \delta(\bx-\by), \quad
\partial_t \bcG(0,\bx;\by) = \tensor{\bf 0}, \qquad \bx \in \Omega.
\end{align*}
By definition~\eqref{eq:estg1}, the field $\tensor{\bg}^{\rm est}$ satisfies the same equation as $\bcG$, but has a different 
initial condition
\begin{align*}
(A +\partial_t^2) \tensor{\bg}^{\rm est}(t,\bx,\by;\tilde \bc)  = \tensor{\bf 0}, \qquad t > 0, ~~ \bx \in \Omega, \\
\tensor{\bg}^{\rm est}(0,\bx,\by;\tilde \bc) = |\hat f(\sqrt{\cA})| \bde^{\rm est}(\bx,\by;\tilde \bc), \quad
\partial_t \tensor{\bg}^{\rm est}(0,\bx,\by;\tilde \bc)  = {\bf 0}, \qquad \bx \in \Omega.
\end{align*}
Here we used that $\cA$ is the restriction of $A$ on $\cD \setminus \mbox{null}(A)$
and also that the columns of $\bde^{\rm est}$ do not have components in $\mbox{null}(A)$.  By the principle of linear superposition, 
\begin{equation}
 \tensor{\bg}^{\rm est}(t,\bx,\by;\tilde \bc) = \int_{\Omega} d \bxi \, \bcG(t,\bx,\bxi) |\hat f(\sqrt{\cA})| \bde^{\rm est}(\bxi,\by;\tilde \bc).
 \label{eq:superp}
 \end{equation}

Similar to what we have done in equation~\eqref{eq:u5}, we decompose $\bcG$ as follows
\begin{align}
\bcG(t,\bx;\by) =  \bbG(t,\bx;\by) +\bGa(t,\bx;\by). 
\label{eq:GFDec}
\end{align}
The first term solves the same equation as $\bcG$, but its initial state is the approximation of $\tensor{\bI} \delta(\bx-\by)$ 
in the range of $A$ and therefore $\cA$,
\begin{equation}
\bde^\cA(\bx,\by) = \sum_{j=1}^\infty \bphi_j(\bx) \bphi_j^T(\by).
\label{eq:approxDe}
\end{equation}
It can be written in compact form as
\begin{equation}
\bbG(t,\bx;\by) = 1_{[0,\infty)}(t) \cos (t \sqrt{\cA} \, ) \bde^\cA(\bx,\by),
\label{eq:defGa2}
\end{equation}
and has a series expansion in the eigenfunctions of $\cA$, which form an orthonormal  basis of $\mbox{range}(\cA)$.
The second term in~\eqref{eq:GFDec} has columns that lie in the null space of $A$. It solves the equation
\begin{equation}
\partial_t^2  \bGa(t,\bx,\by) = \delta'(t) \left[ \underline{\underline{\bI}} \delta(\bx-\by) - \bde^\cA(\bx,\by) \right],
\label{eq:deftGa}
\end{equation} 
and it is given by
\begin{equation}
\bGa(t,\bx;\by) = 1_{[0,\infty)}(t)  \left[ \underline{\underline{\bI}} \delta_{\by}(\bx) - \bde(\bx;\by) \right].
\label{eq:deftildeG}
\end{equation}

Note  that since $\mbox{null}(A) = [\mbox{range}(\cA)]^\perp$,  $\bGa$ has no contribution  in~\eqref{eq:superp}  i.e., 
\begin{equation}
 \tensor{\bg}^{\rm est}(t,\bx,\by;\tilde \bc) = \int_{\Omega} d \bxi \, \bbG(t,\bx,\bxi) |\hat f(\sqrt{\cA})| \bde^{\rm est}(\bxi,\by;\tilde \bc).
 \label{eq:superp1}
 \end{equation}
Using the expression~\eqref{eq:defGa2} of $\bbG$ in equation~\eqref{eq:superp1} and recalling the spectral decomposition of $\cA$, we get 
\begin{align}
\tensor{\bg}^{\rm est}(t,\bx,\by;\tilde \bc)&= \int_{\Omega} d \by \, \cos ( t \sqrt{\cA} \, ) \bde^\cA(\bx,\bxi)|\hat f(\sqrt{\cA})| \bde^{\rm est}(\bxi,\by;\tilde \bc)   \nonumber \\
&=  \sum_{q=1}^\infty \cos (t \sqrt{\theta_q} \, ) \bphi_q(\bx) \int_{\Omega} d \bxi \, \bphi_q^T(\bxi)|\hat f(\sqrt{\cA})| \bde^{\rm est}(\bxi,\by;\tilde \bc)  \nonumber \\
&= \sum_{q=1}^\infty \cos (t \sqrt{\theta_q} \, ) |\hat f( \sqrt{\theta_q} \, )| \bphi_q(\bx) \int_{\Omega} d \bxi \, \bphi_q^T(\bxi) \bde^{\rm est}(\bxi,\by;\tilde \bc).\label{eq:P3.3}
\end{align}

Next, we use the expression~\eqref{eq:checkf} of the signal $\check f$ to get the convolution. First, we deduce that  
\begin{align*}
\int_{-\infty}^\infty dt \, \check f(t) \cos(\om t) &\stackrel{\eqref{eq:checkf}}{=}
\int_{-\infty}^\infty \frac{d \om'}{2\pi} |\hat f(\om')| \int_{-\infty}^\infty dt \, \cos(\om't) \cos(\om t)   \\
&= \int_{-\infty}^\infty \frac{d \om'}{2} |\hat f(\om')| \left[ \delta(\om-\om') + \delta(\om+\om') \right] \\
&= \frac{1}{2} \left[ |\hat f(\om)| + |\hat f(-\om)| \right]= |\hat f(\om)|.
\end{align*}
Therefore, we can write
\begin{align*}
&|\hat f( \sqrt{\theta_q} \, )|\cos (t \sqrt{\theta_q} \, )  = \int_{-\infty}^\infty d t' \, \check f(t') \cos (t' \sqrt{\theta_q} \, ) 
\cos (t \sqrt{\theta_q} \, ) \\
&\quad = \frac{1}{2}\int_{-\infty}^\infty d t' \, \check f(t') \cos [(t-t') \sqrt{\theta_q} \, ] +
 \frac{1}{2}\int_{-\infty}^\infty d t' \, \check f(t') \cos [(t+t') \sqrt{\theta_q} \, ].
 \end{align*}
Changing $t' \mapsto -t'$ in the last term and using that $\check f$ is even, we get
\begin{align}
|\hat f( \sqrt{\theta_q} \, )|\cos (t \sqrt{\theta_q} \, )|  &= \int_{-\infty}^\infty d t' \, \check f(t') \cos [(t-t') \sqrt{\theta_q} \, ] = \check f(t) \star_t  \cos \Big(t \sqrt{\theta_q} \, \Big) . \label{eq:C9}
\end{align}

Substituting~\eqref{eq:C9} in~\eqref{eq:P3.3} evaluated at $\bx = \bx_s$ and switching the convolution with the series (justified via the dominated convergence theorem), we get
\begin{align}
\tensor{\bg}^{\rm est}(t,\bx_s,\by;\tilde \bc) &= \check f(t) \star_t\sum_{q=1}^\infty \cos (t \sqrt{\theta_q} \, ) \bphi_q(\bx_s) \int_{\Omega} d \bxi \, \bphi_q^T(\bxi)\bde^{\rm est}(\bxi,\by;\tilde \bc) \nonumber \\
&= \check f(t) \star_t \int_{\Omega} d \bxi \, \left[\cos ( t \sqrt{\cA} \, ) \bde^\cA(\bx_s,\bxi) \right]\bde^{\rm est}(\bxi,\by;\tilde \bc) \nonumber \\
&\stackrel{\eqref{eq:defGa2}}{=} \check f(t) \star_t \int_{\Omega} d \bxi \, \bbG(t,\bx_s;\bxi) \bde^{\rm est}(\bxi,\by;\tilde \bc)  \nonumber \\
&= \check f(t) \star_t \int_{\Omega} d \bxi \, \bcG(t,\bx_s;\bxi)\bde^{\rm est}(\bxi,\by;\tilde \bc) .
\label{eq:P3.4}
\end{align}
Here the second equality is by the definition of $ \cos ( t \sqrt{\cA} \, )$ and equation~\eqref{eq:approxDe}  and the last equality follows from the decomposition~\eqref{eq:GFDec}. The third equality requires more explanation. Indeed, when taking the convolution, the cosine will be evaluated at negative times if $t = O(t_F)$, so why can we use the causal Green's function?  This is where we use the assumption that  $\bde^{\rm est}(\bxi,\by;\tilde \bc)$ peaks at
points $\bxi$ in the vicinity of $\by$ and that $\bx_s$ is far from $\by$. By causality, we have
$\tensor{\bg}^{\rm est}(t,\bx_s,\by;\tilde \bc) \approx 0$ for $t  = O(t_F)$, so we are not interested in such early times.

To complete the proof, we use Lemma \ref{lem.dyads}. We get
\begin{align*}
\tensor{\bg}^{\rm est}(t,\bx_s,\by;\tilde \bc) &= \check f(t) \star_t \int_{\Omega} d \bxi \, \bc^{-1}(\bx_s) \partial_t \bG(t,\bx_s,\bxi)
\bc^{-1}(\bxi) \bde^{\rm est}(\bxi,\by;\tilde \bc) .
\end{align*}
Switching the time derivative  in the convolution to  $\check f$ and recalling that
$\bc(\bx_s) = c_o \underline{\underline{\bI}}$, we get the result. $~ \Box$

\section{Numerical simulations}
\label{ap:num} 
Here we describe briefly the setup of the numerical simulations and the computational cost.
\subsection{Setup}
To compute the data matrices~\eqref{eq:newD}, we solve the wave equation~\eqref{eq:Sn1}. We use a finite difference time domain numerical method~\cite{FDTD}, with discretization of the operator $A$ on a Lebedev 
grid with step size $\ell$. This consists of two Yee grids, shifted by $\ell/2$, on which we discretize the first and second component of the electric field, respectively~\cite{Lebedev64,Lebedev64II,Yee66,Nauta2013}. The time derivative is discretized with a second order, centered 
difference scheme, on a uniform time grid with small step $\Delta t$ satisfying the CFL condition. The time step $\tau$ used in the computation of the ROM is an integer multiple of $\Delta t$. 

The initial condition~\eqref{eq:DM3} is computed using the spectral decomposition of the discretization of the operator $A$. Equations ~\eqref{eq:defu},~\eqref{eq:u5},~\eqref{eq:u8} and~\eqref{eq:Sn0} and the finite wave speed imply that the initial condition depends 
only on the medium near the array. Thus, it can be computed using the discretization of $A$ in a smaller domain around the array.

The probing signal has the Fourier transform
\begin{equation}
\hat f(\om) = \frac{\om^2}{2} \Big[ e^{-\frac{(\om-\om_o)^2}{2 \om_b^2}} + e^{-\frac{(\om+\om_o)^2}{2 \om_b^2}} \Big],
\label{eq:Num1}
\end{equation}
with standard deviation $\om_b$ chosen so that the highest frequency at $-25$dB  cut-off is $\om_{c} = 5/3\om_o$.
The central wavelength is $\la_o = 26.7 \ell$ and the wavelength at the cut-off frequency is $\la_c = 16 \ell$.

The array of antennas is located at a distance $8 \ell$ from the top boundary.  Unless stated otherwise, for imaging, the antennas are separated by the distance $\la_c/4$ and the data are sampled at time step $\tau = 0.3 \pi/\om_c$. For the inversion, we used the antenna separation $\la_c/2$ and the time step $\tau = 0.45 \pi/\om_c$.

The parametrization~\eqref{eq:Inv4}-\eqref{eq:Inv5} of the search dielectric permittivity tensor is done using 
the Gaussian basis functions
\begin{equation}
\phi_j(\bx) = \exp \left[ - \frac{(x_1-X_{1,j})^2}{2 \sigma_1^2} - \frac{(x_2-X_{2,j})^2}{2 \sigma_2^2}\right],
\end{equation}
where $\{(X_{1,j},X_{2,j})\}_{j=1}^N$ is a rectangular lattice in the inversion domain, with spacing $\la_c/4$ along the $x_1$ axis and 
$5 \la_c/16$ along the $x_2$ axis. The standard deviations are chosen as $\sigma_1=2.3 \ell$ and $\sigma_2=2.9 \ell$ to be able to represent a wide range of media and partition the search domain well.

The Tikhonov regularization parameter $\nu$ is chosen adaptively, based on the eigenvalues of the ``Hessian" in the  
Gauss-Newton method. For $N$ basis functions as in equation~\eqref{eq:Inv5}, we see from ~\eqref{eq:Inv4} that we have $3N$ unknowns. If $\{\la_j\}_{j=1}^{3N}$ are the eigenvalues of the Hessian, sorted in descending order, then in all our simulations 
we set $
\nu = \la_{\mbox{round}(0.9 N)}.
$

\subsection{Computational cost}
We begin with the computational cost of solving the forward problem, and thus computing the data matrices $\bbD(t)$. 
If we have a mesh size with $N_1$ points in the $x_1$ direction and $N_2$ points in the $x_2$ direction, and 
 we use $2m$ excitations, then at each time step we multiply a sparse matrix of size $4(N_1N_2) \times 4(N_1 N_2)$ with a vector of size $4N_1 N_2 \times 2 m$ at a cost of $O(mN_1N_2)$. The cost of computing $\bbD(t)$, at  $t \in (0,T)$, with $T = N_t \Delta t$, is therefore, $O(mN_1N_2 N_t)$. 
 
The cost of each Gauss-Newton iteration for optimizing our objective function or the FWI objective function, is 
 dominated by the computation of the Jacobian. There is an efficient way of computing the Jacobian for FWI, using the 
 adjoint formula~\cite{virieux2009overview}. The extension of this formula to our objective function can be made, but we have not done so. With such a formula, the Jacobian could be computed very efficiently. At the moment, the Jacobian is computed using finite differences and it is the bottleneck of our computations. 
 
 The extra cost of computing our objective function vs. FWI is due to the block Cholesky algorithm, 
 which takes $O(m^3 n^3)$ operations. 
 
 If $mn \gg 1$, which is likely the case in three dimensions, the dominant computational cost should be in solving the normal equations for the Gauss-Newton updates. This can be handled using iterative methods. Moreover, one can reduce the computational 
 cost by working with a collection of sub-arrays. This idea has been used for a different problem in~\cite{borcea2014model}.

\bibliographystyle{spmpsci}      
\bibliography{biblio.bib}

\begin{thebibliography}{10}
\providecommand{\url}[1]{{#1}}
\providecommand{\urlprefix}{URL }
\expandafter\ifx\csname urlstyle\endcsname\relax
  \providecommand{\doi}[1]{DOI~\discretionary{}{}{}#1}\else
  \providecommand{\doi}{DOI~\discretionary{}{}{}\begingroup
  \urlstyle{rm}\Url}\fi

\bibitem{abascal2011electrical}
Abascal, J.F.P., Lionheart, W.R., Arridge, S.R., Schweiger, M., Atkinson, D.,
  Holder, D.S.: Electrical impedance tomography in anisotropic media with known
  eigenvectors.
\newblock Inverse problems \textbf{27}(6), 065004 (2011)

\bibitem{bal2013hybrid}
Bal, G.: Hybrid inverse problems and internal functionals.
\newblock Inverse problems and applications: inside out. II \textbf{60},
  325--368 (2013)

\bibitem{bal2003time}
Bal, G., Ryzhik, L.: Time reversal and refocusing in random media.
\newblock SIAM Journal on Applied Mathematics \textbf{63}(5), 1475--1498 (2003)

\bibitem{biondi20063d}
Biondi, B.: 3D seismic imaging.
\newblock Society of Exploration Geophysicists (2006)

\bibitem{bleistein2001multidimensional}
Bleistein, N., Stockwell, J., Cohen, J.: Multidimensional seismic inversion.
\newblock Springer (2001)

\bibitem{borcea2012filtering}
Borcea, L., del Cueto, F.G., Papanicolaou, G., Tsogka, C.: Filtering
  deterministic layer effects in imaging.
\newblock {SIAM Review} \textbf{54}(4), 757--798 (2012)

\bibitem{borcea2020reduced}
Borcea, L., Druskin, V., Mamonov, A., Zaslavsky, M., Zimmerling, J.: Reduced
  order model approach to inverse scattering.
\newblock SIAM Journal on Imaging Sciences \textbf{13}(2), 685--723 (2020)

\bibitem{borcea2014model}
Borcea, L., Druskin, V., Mamonov, A.V., Zaslavsky, M.: A model reduction
  approach to numerical inversion for a parabolic partial differential
  equation.
\newblock Inverse Problems \textbf{30}(12), 125011 (2014)

\bibitem{DtB}
Borcea, L., Druskin, V., Mamonov, A.V., Zaslavsky, M.: Untangling the
  nonlinearity in inverse scattering with data-driven reduced order models.
\newblock Inverse Problems \textbf{34}, 065008 (2018).
\newblock \urlprefix\url{http://iopscience.iop.org/10.1088/1361-6420/aabb16}

\bibitem{borcea2019robust}
Borcea, L., Druskin, V., Mamonov, A.V., Zaslavsky, M.: Robust nonlinear
  processing of active array data in inverse scattering via truncated reduced
  order models.
\newblock Journal of Computational Physics \textbf{381}, 1--26 (2019)

\bibitem{borcea2021reduced}
Borcea, L., Garnier, J., Mamonov, A., Zimmerling, J.: Reduced order model
  approach for imaging with waves.
\newblock Inverse Problems \textbf{38}(2), 025004 (2021)

\bibitem{borcea2022waveform}
Borcea, L., Garnier, J., Mamonov, A., Zimmerling, J.: Waveform inversion via
  reduced order modeling.
\newblock Geophysics \textbf{88}(2), 1--91 (2022)

\bibitem{borcea2022internal}
Borcea, L., Garnier, J., Mamonov, A., Zimmerling, J.: Waveform inversion with a
  data driven estimate of the internal wave.
\newblock SIAM Journal on Imaging Sciences \textbf{16}(1), 280--312 (2023)

\bibitem{borcea2023data}
Borcea, L., Garnier, J., Mamonov, A.V., Zimmerling, J.: When data driven
  reduced order modeling meets full waveform inversion.
\newblock {To appear in SIAM Review. Preprint arXiv:2302.05988}  (2023)

\bibitem{bunks1995multiscale}
Bunks, C., Saleck, F., Zaleski, S., Chavent, G.: Multiscale seismic waveform
  inversion.
\newblock Geophysics \textbf{60}(5), 1457--1473 (1995)

\bibitem{cakoni2010inverse}
Cakoni, F., Colton, D., Monk, P., Sun, J.: The inverse electromagnetic
  scattering problem for anisotropic media.
\newblock Inverse Problems \textbf{26}(7), 074004 (2010)

\bibitem{chen1997inverse}
Chen, Y.: {Inverse scattering via Heisenberg's uncertainty principle}.
\newblock Inverse problems \textbf{13}(2), 253 (1997)

\bibitem{cheney2009fundamentals}
Cheney, M., Borden, B.: Fundamentals of radar imaging.
\newblock SIAM (2009)

\bibitem{cloude2009polarisation}
Cloude, S.: Polarisation: applications in remote sensing.
\newblock OUP Oxford (2009)

\bibitem{curlander1991synthetic}
Curlander, J., McDonough, R.: Synthetic aperture radar, vol.~11.
\newblock Wiley (1991)

\bibitem{druskin2016direct}
Druskin, V., Mamonov, A.V., Thaler, A.E., Zaslavsky, M.: Direct, nonlinear
  inversion algorithm for hyperbolic problems via projection-based model
  reduction.
\newblock SIAM Journal on Imaging Sciences \textbf{9}(2), 684--747 (2016)

\bibitem{druskin2018nonlinear}
Druskin, V., Mamonov, A.V., Zaslavsky, M.: A nonlinear method for imaging with
  acoustic waves via reduced order model backprojection.
\newblock SIAM Journal on Imaging Sciences \textbf{11}(1), 164--196 (2018)

\bibitem{elbau2018inverse}
Elbau, P., Mindrinos, L., Scherzer, O.: The inverse scattering problem for
  orthotropic media in polarization-sensitive optical coherence tomography.
\newblock GEM-International Journal on Geomathematics \textbf{9}, 145--165
  (2018)

\bibitem{EngquistFroese}
Engquist, B., Froese, B.: {Application of the Wasserstein metric to seismic
  signals}.
\newblock {Communications in Mathematical Sciences} \textbf{12}(5), 979--988
  (2014)

\bibitem{engquist2022optimal}
Engquist, B., Yang, Y.: Optimal transport based seismic inversion: Beyond cycle
  skipping.
\newblock Communications on Pure and Applied Mathematics \textbf{75}(10),
  2201--2244 (2022)

\bibitem{fink99}
Fink, M.: Time-reversed acoustics.
\newblock Scientific American \textbf{281}, 91--97 (1999)

\bibitem{golubVanLoan}
Golub, G.H., Van~Loan, C.F.: Matrix computations.
\newblock John Hopkins University Press, Baltimore (2013)

\bibitem{hanson2013operator}
Hanson, G.W., Yakovlev, A.B.: Operator theory for electromagnetics: an
  introduction.
\newblock Springer Science \& Business Media (2013)

\bibitem{heino2002estimation}
Heino, J., Somersalo, E.: Estimation of optical absorption in anisotropic
  background.
\newblock Inverse Problems \textbf{18}(3), 559 (2002)

\bibitem{herrmann2007non}
Herrmann, F.J., B{\"o}niger, U., Verschuur, D.J.: Non-linear primary-multiple
  separation with directional curvelet frames.
\newblock Geophysical Journal International \textbf{170}(2), 781--799 (2007)

\bibitem{horn2012matrix}
Horn, R.A., Johnson, C.R.: Matrix analysis.
\newblock Cambridge university press (2012)

\bibitem{huang2018source}
Huang, G., Nammour, R., Symes, W.: Source-independent extended waveform
  inversion based on space-time source extension: Frequency-domain
  implementation.
\newblock Geophysics \textbf{83}(5), R449--R461 (2018)

\bibitem{kenig2011inverse}
Kenig, C.E., Salo, M., Uhlmann, G.: {Inverse problems for the anisotropic
  Maxwell equations}.
\newblock Duke Mathematical Journal \textbf{157}(2), 369--419 (2011)

\bibitem{Lebedev64II}
Lebedev, V.I.: Difference analogues of orthogonal decompositions, basic
  differential operators and some boundary problems of mathematical physics.~i.
\newblock U.S.S.R. Comput. Math. Math. Phys. \textbf{3}(4), 69--92 (1964)

\bibitem{Lebedev64}
Lebedev, V.I.: Difference analogues of orthogonal decompositions, basic
  differential operators and some boundary problems of mathematical
  physics.~ii.
\newblock U.S.S.R. Comput. Math. Math. Phys. \textbf{4}(4), 36--50 (1964)

\bibitem{lee1989determining}
Lee, J.M., Uhlmann, G.: Determining anisotropic real-analytic conductivities by
  boundary measurements.
\newblock Communications on Pure and Applied Mathematics \textbf{42}(8),
  1097--1112 (1989)

\bibitem{herrmann2013}
van Leeuwen, T., Herrmann, F.: Mitigating local minima in full-waveform
  inversion by expanding the search space.
\newblock Geophysical Journal International \textbf{195}(1), 661--667 (2013)

\bibitem{mcowen2004partial}
McOwen, R.C.: Partial differential equations: methods and applications, 2 edn.
\newblock Pearson education Inc., Upper Saddle River, NJ 07458 (2003)

\bibitem{monk2003finite}
Monk, P.: Finite element methods for Maxwell's equations.
\newblock Oxford University Press (2003)

\bibitem{nachman2009recovering}
Nachman, A., Tamasan, A., Timonov, A.: Recovering the conductivity from a
  single measurement of interior data.
\newblock Inverse Problems \textbf{25}(3), 035014 (2009)

\bibitem{Nauta2013}
Nauta, M., Okoniewski, M., Potter, M.: {FDTD Method on a Lebedev Grid for
  Anisotropic Materials} (2013).
\newblock \doi{10.1109/TAP.2013.2247373}

\bibitem{ola1993inverse}
Ola, P., P{\"a}iv{\"a}rinta, L., Somersalo, E.: An inverse boundary value
  problem in electrodynamics.
\newblock Duke Mathematical Journal \textbf{70}, 617--653 (1993)

\bibitem{piana1998uniqueness}
Piana, M.: On uniqueness for anisotropic inhomogeneous inverse scattering
  problems.
\newblock Inverse Problems \textbf{14}(6), 1565 (1998)

\bibitem{strunc2007constitutive}
Strunc, M.: Constitutive relations and conditions for reciprocity in
  bianisotropic media (macroscopic approach).
\newblock WSEAS Trans. Electron \textbf{4}, 208--212 (2007)

\bibitem{symes2008migration}
Symes, W.: Migration velocity analysis and waveform inversion.
\newblock Geophysical prospecting \textbf{56}(6), 765--790 (2008)

\bibitem{symes2022error}
Symes, W.: Error bounds for extended source inversion applied to an acoustic
  transmission inverse problem.
\newblock Inverse Problems \textbf{38}(11), 115002 (2022)

\bibitem{FDTD}
Taflove, A., Hagness, S.: Computational Electrodynamics: The Finite-Difference
  Time-Domain Method, 3rd edition., vol. 2062 (2005).
\newblock \doi{10.1016/B978-012170960-0/50046-3}

\bibitem{tai1992complementary}
Tai, C.T.: Complementary reciprocity theorems in electromagnetic theory.
\newblock IEEE Transactions on antennas and propagation \textbf{40}(6),
  675--681 (1992)

\bibitem{Torquato}
Torquato, S.: {Random heterogeneous materials Microstructure and macroscopic
  properties}.
\newblock Springer (2000)

\bibitem{verschuur2013seismic}
Verschuur, D.: Seismic multiple removal techniques: past, present and future.
\newblock EAGE publications Houten, The Netherlands (2013)

\bibitem{virieux2009overview}
Virieux, J., Operto, S.: An overview of full-waveform inversion in exploration
  geophysics.
\newblock Geophysics \textbf{74}(6), WCC1--WCC26 (2009)

\bibitem{weglein1997inverse}
Weglein, A., Gasparotto, F., Carvalho, P., Stolt, R.: An inverse-scattering
  series method for attenuating multiples in seismic reflection data.
\newblock Geophysics \textbf{62}(6), 1975--1989 (1997)

\bibitem{Yee66}
Yee, K.: {Numerical solution of initial boundary value problems involving
  Maxwell's equations in isotropic media}.
\newblock IEEE Transactions on Antennas and Propagation \textbf{14}(3),
  302--307 (1966).
\newblock \doi{10.1109/TAP.1966.1138693}

\end{thebibliography}

\end{document}